\documentclass{article}
\usepackage{graphicx}
\usepackage{amsthm, amsmath, amssymb, tikz, bm}
\usepackage{mathtools}
\usepackage{xifthen}
\usepackage[normalem]{ulem}\usepackage{bbm}
\usepackage{float}
\usepackage{subcaption}

\usepackage[margin=1.2in]{geometry} 

\usepackage[shortlabels]{enumitem}
\usepackage{todonotes}
\usepackage[colorlinks=true,
linkcolor=blue,citecolor=blue,
urlcolor=blue]{hyperref}

\allowdisplaybreaks

\usetikzlibrary{calc,shapes, backgrounds}

\usepackage{amssymb, amsmath, amsthm, graphicx,mathrsfs}
\usepackage{xcolor}

\usepackage{graphicx} 

\newtheorem{thm}{Theorem}[section]
\newtheorem{cor}[thm]{Corollary}

\newtheorem{lem}[thm]{Lemma}
\newtheorem{conj}[thm]{Conjecture}
\newtheorem{construction}[thm]{Construction}

\newtheorem{claim}[thm]{Claim}

\title{The Tur\'an number of Berge paths in connected hypergraphs}
\date{\today}
\author{
{{Yupei Li}}\thanks{
\footnotesize {University of South Carolina, Columbia, SC 29208, USA. Email: {\tt yupei@email.sc.edu}.}}
\and{{Ruth Luo}}\thanks{University of South Carolina, Columbia, SC 29208, USA. Email: {\tt ruthluo@sc.edu}. Research of this author
is supported in part by NSF grant DMS-2452134.
}}
\begin{document}

\maketitle

\begin{abstract} 
A Berge path in a hypergraph is an alternating sequence of distinct vertices and edges $v_1,e_1, v_2, \ldots, e_{k-1}, v_{k}$ such that $v_i, v_{i+1} \in e_i$ for all $i \in [k-1]$. Gy\H{o}ri, Katona, and Lemons determined the maximum number of edges in $r$-uniform, $n$-vertex hypergraphs without Berge paths on $k$ vertices. The extremal examples for their bounds are disconnected hypergraphs in which each component has either too few edges or too few vertices to contain such a Berge path.

We consider the setting in which the host hypergraph is connected. Let $\operatorname{ex}^{\mathrm{conn}}_r(n,\mathcal{BP}_k)$ denote the maximum number of edges in an $n$-vertex, $r$-uniform, connected hypergraph that does not contain a Berge path on $k$ vertices. This number was determined by Zhang, Broersma, Győri, Tompkins, Wang for $k \leq r+2$, and Zhao, Gerbner, Zhou for $k \geq 2r+2$. In this paper, we resolved the remaining cases $r+3 \leq k \leq 2r+1$ when $n$ is sufficiently large, confirming a conjecture of  Zhang, et al.
\end{abstract}

\section{Introduction}
For a fixed graph $H$, the {\em Tur\'an number} of $H$ is the function $\operatorname{ex}(n,H)$ equal to the maximum number of edges in an $n$ vertex, $H$-free graph. In 1959, Erd\H{o}s and Gallai~\cite{EG} determined the following bound for the Tur\'an number of paths and long cycles. Let $P_k$ denote the path on $k$ vertices (i.e., of length $k-1$) and let $C_k$ denote the cycle on $k$ vertices. The {\em circumference of a graph} is the length of its longest cycle.

\begin{thm}[Erd\H{o}s--Gallai \cite{EG}] Fix $n \geq k \geq 2$. 
    Then $\operatorname{ex}(n,P_k) \leq \frac{(k-2)n}{2}$. Moreover, if $n \geq k \geq 3$, and $G$ has circumference less than $k$, then $|E(G)| \leq \frac{(n-1)(k-2)}{2}$. 
\end{thm}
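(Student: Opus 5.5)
We prove the cycle statement first and deduce the path bound from it. Throughout we use induction on $n$, together with the classical rotation--extension (Pósa / Dirac type) argument applied to a longest path.

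\textbf{Cycle part.} Let $G$ be an $n$-vertex graph with circumference less than $k$, where $n\ge k\ge 3$. We want $|E(G)|\le \frac{(n-1)(k-2)}{2}$, and we induct on $n$.

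First, if $G$ is disconnected, or has a cut vertex, or has a vertex of degree at most $\frac{k-2}{2}$, we reduce to smaller graphs.
\begin{itemize}
\item If some vertex has degree at most $(k-2)/2$, delete it and apply induction. When $n-1<k$, use instead the trivial bound $\binom{n-1}{2}\le \frac{(n-2)(k-2)}{2}$.
\item If $G$ has a cut vertex $v$, split $G$ into two graphs $G_1,G_2$ sharing only $v$, with $n_1+n_2=n+1$. This gives $\frac{(n_1-1)+(n_2-1)}{2}(k-2)=\frac{(n-1)(k-2)}{2}$. Small blocks are again handled by the trivial bound $\binom{n_i}{2}\le \frac{(n_i-1)(k-2)}{2}$ when $n_i\le k-1$.
\item Disconnected graphs are handled the same way, and they even have slack.
\end{itemize}
So we may assume $G$ is 2-connected with minimum degree $\delta> (k-2)/2$.

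Now take a longest path $P=v_1\dots v_m$, chosen so that $d(v_1)+d(v_m)$ is maximal among longest paths. All neighbours of $v_1$ and of $v_m$ lie on $P$. The standard crossing argument shows the following: if $v_1v_{i+1}$ and $v_iv_m$ are both edges, then $G$ contains a cycle on all $m$ vertices of $P$. By 2-connectivity and maximality of $P$, such a cycle is either all of $G$ or extendable, which contradicts maximality. Hence we may assume no crossing pair exists, which forces $d(v_1)+d(v_m)\le m-1$. Combining this with 2-connectivity (Dirac/Erdős–Gallai lemma: in a 2-connected graph, a path with endpoint degrees $d_1,d_2$ yields a cycle of length at least $\min(m, d_1+d_2)$) gives a cycle of length at least $2\delta\ge k-1$ (and indeed $\ge k$) when $\delta\ge k/2$. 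This is the contradiction we need.

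The delicate case is $\delta$ exactly $\lceil (k-1)/2\rceil$. Here I would refine the low-degree deletion step: delete a vertex of degree at most $(k-2)/2$ in the sense that $2d(v)\le k-2$. Then a minimum degree violating this gives $2\delta\ge k-1$, so a cycle of length at least $k-1$ arises. We need length $k$, so the parity and integrality bookkeeping here is the main obstacle. I would handle it by using Kopylov-style counting: remove vertices of degree at most $\lfloor (k-1)/2\rfloor$ repeatedly, and check that the bound still closes, using $\binom{k-1}{2}$ for the remaining core.

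\textbf{Path part.} Given a $P_k$-free graph $G$, add a new vertex joined to all vertices. The resulting graph $G'$ has $n+1$ vertices and no cycle of length $k+1$ or more, since such a cycle would give a path on at least $k$ vertices in $G$. Applying the cycle bound with $k+1$ gives
\[
|E(G)|+n\le \frac{n(k-1)}{2},
\]
that is, $|E(G)|\le \frac{n(k-3)}{2}$. This is stronger than needed once $n+1\ge k+1$. Otherwise, $n<k$ and the trivial bound $\binom n2\le \frac{(k-2)n}{2}$ applies.
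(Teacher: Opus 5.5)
\textbf{The cycle part cannot be proved as stated.} The inequality you set out to prove is false as printed: the Erd\H{o}s--Gallai circumference bound is $|E(G)|\le\frac{(n-1)(k-1)}{2}$, not $\frac{(n-1)(k-2)}{2}$. Two checks confirm this.
\begin{itemize}
\item The paper's own extremal example, a graph all of whose blocks are $K_{k-1}$, has $\frac{n-1}{k-2}\binom{k-1}{2}=\frac{(n-1)(k-1)}{2}$ edges.
\item Already for $k=3$, any tree on $n\ge 3$ vertices has no cycle and $n-1>\frac{n-1}{2}$ edges.
\end{itemize}
The error shows up at specific steps of your proof. Your ``trivial bound'' $\binom{n_i}{2}\le\frac{(n_i-1)(k-2)}{2}$ is equivalent to $n_i\le k-2$. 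It therefore fails precisely for blocks $K_{k-1}$, and likewise $\binom{n-1}{2}\le\frac{(n-2)(k-2)}{2}$ fails when $n=k$. The ``delicate case'' $2\delta=k-1$ is not parity bookkeeping that Kopylov-style counting could repair. For $k=5$, the $2$-connected graph $K_2\vee\overline{K}_{n-2}$ has $\delta=2$, circumference $4$, and $2n-3>\frac{3(n-1)}{2}$ edges for every $n\ge 5$. Your path part should also have been an alarm. You derive $|E(G)|\le\frac{n(k-3)}{2}$ and call it stronger than needed. But when $(k-1)\mid n$, the disjoint union of copies of $K_{k-1}$ is $P_k$-free with $\frac{n(k-2)}{2}$ edges, so that conclusion is false; a bound strictly better than a sharp one signals an error upstream.

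\textbf{How to repair it.} Prove the correct bound $\frac{(n-1)(k-1)}{2}$ for all $n\ge 1$; for $n\le k-1$ it is just $\binom n2\le\frac{(n-1)(k-1)}{2}$. Your own induction then works with deletion threshold $d(v)\le\frac{k-1}{2}$.
\begin{itemize}
\item Deleting such a vertex, splitting at a cut vertex (where $n_1+n_2=n+1$), and splitting into components each preserve the bound, exactly or with slack.
\item In the remaining case $G$ is $2$-connected with $\delta>\frac{k-1}{2}$, hence $2\delta\ge k$. Dirac's theorem (a $2$-connected graph has a cycle of length at least $\min(n,2\delta)$, which is what your longest-path crossing argument proves) gives a cycle of length at least $k$ when $n\ge k$, a contradiction.
\end{itemize}
So there is no delicate case.

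\textbf{Comparison with the paper.} The paper does not prove the cycle bound; it cites it. Its only argument is the universal-vertex reduction, which is exactly your path part. With the corrected bound applied to $G'$ (with $n+1$ vertices and $k+1$ in place of $k$), it gives $|E(G)|+n\le\frac{nk}{2}$, i.e.\ the sharp bound $|E(G)|\le\frac{n(k-2)}{2}$.
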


The bound for paths is best possible whenever $n$ is divisible by $k-1$: take the disjoint union of cliques $K_{k-1}$. For the circumference result, when $n-1$ is divisible by $k-2$ we take any graph in which every block is a clique $K_{k-1}$. 

We note that the paths result is an easy corollary of the circumference result: let $G$ be a graph without a copy of $P_k$, and let $G'$ be obtained from $G$ by creating a new vertex $v$ that is adjacent to all of $V(G)$. A cycle in $G'$ of length at least $\ell$ yields a copy of $P_{\ell-1}$ in $G$, so $G'$ has circumference at most $k$. 

Various strengthenings of the Erd\H{o}s--Gallai problem were proven by Faudree and Schelp~\cite{FS, FS2}, Lewin~\cite{Lewin}, Woodall~\cite{Woodall}. The strongest such result was due to Kopylov~\cite{K}.

Observe that the extremal example for the path Tur\'an problem is disconnected. By $\operatorname{ex}^{\mathrm{conn}}(n, P_k)$, we denote the maximum number of edges in a {\em connected} $n$-vertex $P_k$-free graph. 

\begin{thm}[Kopylov~\cite{K}]\label{Kopylovthm} Let $n \geq k \geq 4$. Then \[\operatorname{ex}^{\mathrm{conn}}(n, P_k) \leq \max\{{k-2 \choose 2} + (n-k+2), {\lfloor k/2 \rfloor \choose 2} + \lfloor (k-2)/2 \rfloor (n-\lfloor k/2\rfloor)\}.\] 
\end{thm}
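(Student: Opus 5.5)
We plan to prove Kopylov's bound by reduction to a circumference-type statement for $2$-connected graphs, followed by induction on $n$ to treat cut vertices. Let $G$ be a connected $n$-vertex $P_k$-free graph with the maximum number of edges. Write $h(n,k,t)=\binom{k-t}{2}+t(n-k+t)$; the two terms in Theorem~\ref{Kopylovthm} are essentially $h(n,k-1,1)$ and $h(n,k-1,\lfloor (k-2)/2\rfloor)$, with the index $k$ shifted by one relative to the cycle version. Since $h$ is convex in $t$, its maximum over the admissible range $1\le t\le \lfloor (k-2)/2\rfloor$ is attained at an endpoint. So it suffices to show that $e(G)\le h(n,k-1,t)$ for some admissible $t$.

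\textbf{Step 1: closure.} We saturate $G$: adding any non-edge creates a $P_k$, unless the addition is forbidden by connectivity, which it never is. By maximality, therefore, every non-adjacent pair $u,v$ is joined by a Hamiltonian-type path on $k-1$ vertices with endpoints $u,v$. We then apply a P\'osa/Kopylov rotation argument. Take a longest path $P=v_1\dots v_m$ with $m\le k-1$. All neighbours of $v_1$ and of $v_m$ lie on $P$, and the rotations give the usual crossing-pairs count. When $G$ is $2$-connected and $n>m$, this shows that the endpoints of rotated paths have degree at most $t$ outside a dense core, where $t$ is determined by $m$.

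\textbf{Step 2: the degree-peeling argument (main obstacle).} Following Kopylov, we define the $t$-disintegration of $G$: repeatedly delete vertices of degree at most $t$. We choose $t$ so that the core that remains, if nonempty, together with $2$-connectivity and the connecting paths guaranteed by saturation, would force a path on $k$ vertices. Here a Bondy/Erd\H{o}s--Gallai style estimate is needed: in a $2$-connected graph whose core has minimum degree greater than $t$, there is a cycle of length at least $2(t+1)$. This cycle, extended by a path to an outside vertex, gives a long path. Hence the core must have at most $k-t$ vertices. Counting edges, each peeled vertex contributes at most $t$, and the core contributes at most $\binom{k-t}{2}$, giving $e(G)\le h(n,k-1,t)$. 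Calibrating $t$ precisely so that both "core too big gives a long path" and "the peeled edges are bounded" hold simultaneously is the delicate point. We would first attempt this with $t=\lfloor (k-2)/2\rfloor$, and fall back to $t=1$ when the core is the whole graph minus pendant structures.

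\textbf{Step 3: non-$2$-connected case.} If $G$ has a cut vertex, we split it into blocks. A leaf block $B$ with cut vertex $x$ has the property that $B$ together with any path leaving through $x$ is $P_k$-free. We apply induction to $G-(B-x)$ and the $2$-connected bound to $B$, and combine them using the convexity of $h$ in $n$; the combination should be dominated by the pendant-edge construction $\binom{k-2}{2}+(n-k+2)$. Small cases with $n\le k$ are handled directly.
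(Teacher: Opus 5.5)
The paper does not prove this statement; it is quoted from Kopylov. Your sketch therefore has to stand on its own, and as written each of its three steps has a genuine gap, even though you name the right ingredients (saturation, disintegration, convexity in the level).

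\textbf{Steps 1 and 2.} Step 1 transplants the closure property of the \emph{cycle} problem. For paths, maximality only says that adding $uv$ creates a $P_k$ through $uv$, i.e.\ two disjoint paths starting at $u$ and at $v$ with $k$ vertices in total. It does not give a $u$--$v$ path on $k-1$ vertices. For $k=5$, $K_3$ with all other vertices pendant at one clique vertex has the maximum $n$ edges, yet two pendant vertices are joined only by a path on $3$ vertices.

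Step 2 rests on an estimate that is false. A $2$-connected graph with nonempty $t$-core need not have a cycle of length $2(t+1)$: take $K_{t+2}$ plus vertices adjacent to the same two clique vertices; for $t\ge 2$ the core is $K_{t+2}$ and the circumference is $t+3$. Even granting the estimate, a long cycle would force the core to be empty; it cannot bound the size of a nonempty core. So ``hence at most $k-t$ vertices'' is a non sequitur, and that bound would only give $h(n,k,t)=h(n,k-1,t)+(k-2t-1)$ anyway.

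More fundamentally, no fixed level works. For $k\ge 8$, $K_{k-3}$ with all other vertices joined to the same two clique vertices is $P_k$-free with $h(n,k-1,2)$ edges. Its $1$-core is the whole graph, and its $\lfloor (k-2)/2\rfloor$-core is $K_{k-3}$. So peeling at level $1$ says nothing, and peeling at level $\lfloor (k-2)/2\rfloor$ overshoots the target for large $n$. The correct level must be read off from $G$, and that is Kopylov's lemma, which your plan does not contain: in a saturated $2$-connected graph with circumference less than $k'$, a nonempty $\lfloor (k'-1)/2\rfloor$-core is a complete graph on $k'-s$ vertices for some admissible $s$, and it already equals the $s$-core. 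This is proved by the rotation and crossing-pair analysis. After it, the count $h(n,k',s)$ and your convexity remark finish.

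\textbf{Step 3.} It fails as described. The function $h$ is affine, not convex, in $n$. Adding bounds for $B$ and $G-(B-x)$ counts the constant term twice; for $t=1$ the sum exceeds $h(n,k-1,1)$ by $\binom{k-3}{2}$. The two sides are constrained only jointly, through paths meeting at $x$: two copies of $K_{k-2}$ sharing a vertex each avoid $P_k$, but for $k\ge 5$ their union does not. You give no way to use this.

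The standard route avoids Step 3 entirely via the reduction the paper records for Erd\H{o}s--Gallai. The graph $G'=G\vee K_1$ is $2$-connected on $n+1$ vertices with no cycle of length at least $k+1$. Since $h(n+1,k+1,s+1)-n=h(n,k-1,s)$, Kopylov's $2$-connected bound $\max\{h(n+1,k+1,2),h(n+1,k+1,\lfloor k/2\rfloor)\}$ for $G'$ becomes exactly your target. In that setting the closure of Step 1 is valid, and what remains is the clique lemma above.

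\textbf{The target.} Your target is not ``essentially'' the printed one. For odd $k$, $h(n,k-1,\lfloor (k-2)/2\rfloor)=\binom{\lceil k/2\rceil}{2}+\lfloor (k-2)/2\rfloor(n-\lceil k/2\rceil)$ exceeds the printed second term by one. Yours is Kopylov's true bound: the printed $\lfloor k/2\rfloor$ should be $\lceil k/2\rceil$. As printed, the inequality is false for odd $k\ge 7$. For $k=7$ and $n\ge 8$, $K_2\vee\overline{K}_{n-2}$ plus one edge inside the independent side is connected, $P_7$-free, and has $2n-2$ edges, while the printed maximum is $2n-3$. You should state that you prove the corrected bound.
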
Kopylov also proved exact values for the maximum number of edges in a $2$-connected, $n$-vertex graph with circumference  less than $k$. 

We consider hypergraph versions of these problems.

\subsection{Berge paths in hypergraphs}
 Unless stated otherwise, our hypergraphs are {\em simple}, that is, they do not contain multi-edges.

A {\em hypergraph $H$} is set of vertices $V(H)$ and a set of edges $E(H)$ in which each edge is a subset of vertices. When each edge has the same size, say $r$, we say $H$ is {\em $r$-uniform}.

A {\em Berge path} on $k$ vertices is an alternating sequence of $k$ distinct vertices and $k-1$ distinct edges 
\[P=v_1, e_1, v_2, \ldots, e_{k-1}, v_k,\] in which $v_{i}, v_{i+1} \in e_i$ for all $1 \leq i \leq k-1$. 

Although Berge paths may contain many vertices in the union of its edges, we simply define $V(P) = \{v_1, \ldots, v_k\}$ and $E(P) = \{e_1, \ldots, e_{k-1}\}$.

Similarly, a {\em Berge cycle} of length $k$ is an alternating sequence of $k$ distinct vertices and $k$ distinct edges
\[C=v_1, e_1, v_2, \ldots, e_{k-1}, v_k, e_k, v_1,\] in which $v_i,v_{i+1} \in e_i$ for all $1 \leq i \leq k$ (with indices taken modulo $k$). 

We define $V(C) = \{v_1, \ldots, v_k\}, E(C) = \{e_1, \ldots, e_k\}$.

We say a hypergraph is {\em connected} if every pair of its vertices can be connected by a Berge path.

The first analogue of the Erd\H{o}s--Gallai Theorem for hypergraphs was proven by Gy\H{o}ri, Katona, and Lemons~\cite{GKL}. We note that in this result and the results to follow, the bound on the Tur\'an numbers depend on the relationship between the uniformity $r$ and the forbidden path size $k$.

Let $\mathcal{BP}_k$ be the set of Berge paths on $k$ vertices. We use $\operatorname{ex}_r(n, \mathcal{BP}_k) \;\;(\operatorname{ex}^{\mathrm{conn}}_r(n, \mathcal{BP}_k))$ to denote the maximum number of edges in an $n$-vertex, $r$-uniform (connected) hypergraph with no $\mathcal{BP}_k$. 

\begin{thm}[Gy\H{o}ri, Katona, Lemons~\cite{GKL}] If $n\geq k > r+2\geq 5$, then $\operatorname{ex}_r(n, \mathcal{BP}_k) \leq \frac{n}{k-1}{k-1 \choose r}$, and if $n\geq r+1 \geq k \geq 3$, then $\operatorname{ex}_r(n, \mathcal{BP}_k) \leq \frac{n(k-2)}{r+1}$.
\end{thm}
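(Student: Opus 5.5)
The plan is to reduce to a single connected component and then bound the edge density of each component. Both bounds have the form $|E(H)| \le c\cdot|V(H)|$, with $c=\binom{k-1}{r}/(k-1)$ in the first regime and $c=(k-2)/(r+1)$ in the second. A $\mathcal{BP}_k$-free hypergraph is the disjoint union of $\mathcal{BP}_k$-free components, so it suffices to prove $|E(H)|\le c\,|V(H)|$ for every connected $\mathcal{BP}_k$-free $r$-uniform $H$. I would argue by induction on $|V(H)|$, always working with a longest Berge path $P=v_1,e_1,\dots,e_{\ell-1},v_\ell$ in $H$, where $\ell\le k-1$.

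\textbf{Regime $k>r+2$.} Let $m=|V(H)|$.

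First suppose $m\le k-1$. Then $|E(H)|\le\binom{m}{r}$. Since $\binom{m}{r}/m=\binom{m-1}{r-1}/r$ is nondecreasing in $m$, this is at most $\frac{m}{k-1}\binom{k-1}{r}$, as required.

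Now suppose $m\ge k$. I would use the maximality of $P$ in the standard way.
\begin{itemize}
\item Every edge containing an endpoint $v_1$ or $v_\ell$ and not on $P$ lies inside $V(P)$. Otherwise $P$ could be extended.
\item A P\'osa-type rotation argument then shows something stronger. Consider the set $X$ of all vertices that can occur as endpoints of longest paths on the same vertex set $V(P)$. Every edge meeting $X$ is either inside $V(P)$ or one of the at most $\ell-1$ path edges.
\item Connectivity, together with $m\ge k>\ell$, forces some edge to leave $V(P)$. From this I would show that $V(P)$ is not "closed" under the rotations, i.e.\ it does not behave like a Berge cycle. This should confine the edges touching $V(P)$ into a structure with few edges.
\end{itemize}
The goal is to find a vertex set $S\subseteq V(P)$ such that $H-S$ (delete $S$ and every edge meeting it) has at most as many components as before, or splits into components we can handle, and such that the number of deleted edges is at most $c|S|$. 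Induction on $H-S$ then finishes. The natural choice is $S=X$, or $S=V(P)$ minus a cut vertex. The count $|E|\le\binom{|S|+1}{r}$-type for edges inside $V(P)$ fits, because $\binom{j}{r}/j$ is increasing and $|S|\le k-1$.

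\textbf{Regime $k\le r+1$.} Here edges are at least as large as the forbidden path. The key observation is that any two edges meeting in a vertex already give long paths when $r$ is large. I would show that a connected $\mathcal{BP}_k$-free component has at most $k-2$ edges. Indeed, given $k-1$ edges in a connected $r$-uniform hypergraph with $r\ge k-1$, one can greedily choose distinct vertices to thread a Berge path through $k-1$ of them, or through a spanning tree of the edges. This greedy step needs a Hall-type choice of representatives, and it uses $r+1\ge k$.

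Next I would bound the number of vertices from below. A connected component with $t$ edges has at least $r+1$ vertices if $t\ge 2$, since two distinct $r$-sets span at least $r+1$ vertices; if $t=1$ it has $r$ vertices. So $t\le k-2$ edges on at least $r+1$ vertices gives density at most $(k-2)/(r+1)$. The single-edge component gives density $1/r$, and $1/r\le(k-2)/(r+1)$ when $k\ge3$.

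\textbf{Main obstacle.} I expect the hard step to be the first regime's structural claim. One needs a deletion set $S$ whose removal costs at most $\frac{|S|}{k-1}\binom{k-1}{r}$ edges, while the rest of the component can still be controlled. What I would try first is to mimic the Erd\H{o}s--Gallai/Kopylov argument: pass to the case where $V(P)\cup\{\text{outside neighbor}\}$ would contain a Berge cycle of length $\ell$. Then use connectivity to extend to a path on $\ell+1$ vertices, contradicting maximality. The remaining case is that $V(P)$ attaches to the rest only through a bounded separator, which I would delete together with its incident edges.
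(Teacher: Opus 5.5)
The paper only quotes this theorem from~\cite{GKL} and gives no proof, so there is no in-paper argument to compare with. Judged on its own, your proposal has genuine gaps in both regimes.

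\textbf{The regime $k\le r+1$.} Your key claim, that a connected $\mathcal{BP}_k$-free component has at most $k-2$ edges, is false. Take $m$ distinct $r$-sets that pairwise meet exactly in one common vertex $v$. This hypergraph is connected. Every interior vertex of a Berge path lies in two distinct edges, hence equals $v$, so no Berge path has more than $3$ vertices. Thus it is $\mathcal{BP}_k$-free for every $k\ge 4$, with $m$ as large as you like. Your Hall-type threading breaks exactly here: consecutive edges may meet only in the same single vertex. The condition $r\ge k-1$ makes edges large but does not make their pairwise intersections large or distinct, and a spanning tree of edges is not a path. Such components satisfy the bound only because each edge brings $r-1$ private vertices. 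So you need a genuine density argument that charges edges to vertices of the component, not a cap on the number of edges.

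Separately, your check $1/r\le (k-2)/(r+1)$ fails at $k=3$, and the statement as written is actually false there. Two disjoint edges on $n=2r$ vertices form a $\mathcal{BP}_3$-free hypergraph with $2>2r/(r+1)$ edges. For $k=3$ the edges must be pairwise disjoint, so the true value is $\lfloor n/r\rfloor$. The second bound has to be stated and proved for $k\ge 4$.

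\textbf{The regime $k>r+2$.} Everything beyond the easy case $|V(H)|\le k-1$ is a plan rather than a proof. The induction rests on a lemma you never establish: a set $S$ meeting at most $\frac{|S|}{k-1}\binom{k-1}{r}$ edges, whose deletion leaves something the induction can handle. The accounting you indicate also does not balance:
\begin{itemize}
\item Monotonicity of $\binom{j}{r}/j$ only gives $\binom{|S|+1}{r}\le\frac{|S|+1}{k-1}\binom{k-1}{r}$. This is a full $\frac{1}{k-1}\binom{k-1}{r}$ over the budget, and for $|S|=k-2$ the required inequality is simply false.
\item The path edges through $S$ that leave $V(P)$ are not charged at all.
\end{itemize}
Note also that the hypothesis $k>r+2$ is never used, although the boundary case $k=r+2$ is exactly the one that required the separate work of~\cite{DGMT}. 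This indicates that the real difficulty has not been addressed. That difficulty is to show quantitatively that, in a connected component with at least $k$ vertices, an edge leaving $V(P)$ forces the edges around $V(P)$ to be sparse.
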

Later,  Davoodi, Gy\H{o}ri, Methuku, and Tompkins~\cite{DGMT} completed the missing case $k = r+2$.

Similar to the graph case, for $k \geq r+2$, the extremal example is the hypergraph composed of disjoint cliques $K_{k-1}^{(r)}$. The behavior changes for hypergraphs with large uniformity. For $r + 1 \geq k$, we instead take the hypergraph in which every connected component has $r+1$ vertices and $k-2$ arbitrary edges. Since a Berge path on $k$ vertices has $k-1$ edges, such a hypergraph is $\mathcal{BP}_k$-free.

In this paper, we consider the connected version of the Tur\'an problem. The following are the previously known results for $\operatorname{ex}^{\mathrm{conn}}_r(n,\mathcal{BP}_k)$.

\begin{thm}[Zhang, Broersma, Győri, Tompkins, Wang \cite{ZBGTW}]
Fix integers $n,r,k$ with $r+1\ge k\ge 5$. When $n$ is sufficiently large and $n$ is not a multiple of $r$, we have
\[
\operatorname{ex}^{\mathrm{conn}}_r(n,\mathcal{BP}_k)
=
\left\lfloor \frac{k-2}{2} \right\rfloor
\left\lfloor \frac{n-1}{r} \right\rfloor
+\mathbf{1}_{2\nmid k},
\]
where $\mathbf{1}_{2\nmid k}=1$ if $2\nmid k$ and
$\mathbf{1}_{2\nmid k}=0$ otherwise.
\end{thm}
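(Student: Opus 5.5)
Write $t=\lfloor (k-2)/2\rfloor$ and $q=\lfloor (n-1)/r\rfloor$. The plan is to prove a matching lower bound by construction and an upper bound by a structural analysis.

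\textbf{Lower bound.} I would use a "star of blocks" construction. Fix a central vertex $c$ and choose $q$ pairwise disjoint sets $S_1,\dots,S_q$, each of size $r$, among the remaining vertices. On each $c\cup S_i$, which has $r+1$ vertices, place $t$ distinct $r$-edges. Then attach the leftover $n-1-qr$ vertices, of which there are between $1$ and $r-1$ since $r\nmid n$ fails for $n-1$, by an edge that overlaps existing blocks. When $k$ is odd, add one extra edge so that connectivity is kept and the count rises by one. A Berge path can pass through $c$ at most once. So it uses at most two blocks, and inside each block it uses at most $t$ edges along one side of $c$, or more precisely at most $\lfloor (k-2)/2\rfloor$ edges on each side. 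This gives at most $2t+1\le k-1$ vertices, or $2t+2$ in the odd case, which needs care. The path must be checked to have fewer than $k$ vertices. The leftover vertices are absorbed by letting one edge meet both the leftover set and some $S_i$, and the extra edge gives the $\mathbf 1_{2\nmid k}$ term.

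\textbf{Upper bound.} Let $H$ be connected, $n$-vertex, $r$-uniform, and $\mathcal{BP}_k$-free, with $k\le r+1$ and $n$ large. The key observation is that, since $k\le r+1$, a Berge path on $k$ vertices needs only $k-1$ edges. Any connected subhypergraph with at least $k-1$ edges that is "spread out" contains one, because every edge has $r\ge k-1$ vertices, which leaves ample room to choose distinct representative vertices. I would proceed in three steps.

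\begin{enumerate}[(i)]
\item Show that $H$ has a vertex-cut structure. Take a longest Berge path $P$ with $\ell\le k-1$ vertices. Every edge of $H$ must meet $V(P)$ or be "hanging", and by maximality a hanging edge can only attach at certain positions. Since $|E(H)|\approx tn/r$ is linear while $|V(P)|<k$, some vertex $c$ of $P$ lies in linearly many edges.
\item Delete $c$ and study the components $B_1,\dots,B_m$ of the hypergraph induced on $V(H)\setminus\{c\}$, with edges counted together with $c$. The aim is to show that each component with $s$ edges spans at least $r$ non-$c$ vertices when $s\ge1$. More precisely, a component with $s$ edges has at least about $r\lceil s/t\rceil$ vertices. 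The reason is that more than $t$ edges inside $r+1$ vertices give a Berge path of length $\ge t+1$ ending at $c$. Two such long paths in different components concatenate through $c$ into a $\mathcal{BP}_k$.
\item Sum over the components. All but at most one component contain at most $t$ edges per $r$ vertices. A single exceptional component may carry a path one longer on its side, which is where the $+1$ for odd $k$ comes from. Components that are not blocks on $r+1$ vertices are handled by the Gy\H{o}ri--Katona--Lemons bound $\frac{(k-2)|B|}{r+1}$, which is applied inside the component to paths avoiding $c$. Summing yields $tq+\mathbf 1_{2\nmid k}$. The hypothesis $r\nmid n$ enters through the floor $\lfloor (n-1)/r\rfloor$: leftover vertices cannot form a full extra block.
\end{enumerate}

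\textbf{Main obstacle.} I expect step (ii) to be the hardest: proving that a linear-degree cut vertex exists, and that the other components cannot beat the ratio $t/r$. Edges not containing $c$, and components mixing several blocks, make the counting delicate. To handle them, I would first try a discharging/weighting argument, giving each edge weight $r/t$ spread over its new vertices, combined with the Berge-path extension lemma that a connected component with more than $t$ edges yields a path of $t+1$ vertices ending at any prescribed vertex of high degree.
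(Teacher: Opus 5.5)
\textbf{There is a genuine gap: step (ii) fails, and the equality as quoted cannot hold on the whole stated range.} Note first that the paper gives no proof of this statement. It is quoted from Zhang, Broersma, Gy\H{o}ri, Tompkins and Wang, and the paper supplies only the sharpness construction for $n=qr+1$, which is essentially your lower bound.

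\textbf{Step (ii).} You claim that a component of $H-c$ with $s$ edges spans about $r\lceil s/t\rceil$ vertices. This is never proved. Your observation that two different components cannot both carry long paths ending at $c$ says nothing about the density of a single component. The claim is also false. Take a sunflower with kernel $K$, $|K|=k-3\ge 2$, and pairwise disjoint petals of size $r-k+3\ge 2$.
\begin{itemize}
\item Every internal vertex of a Berge path lies in two distinct edges, hence in $K$. So this connected $r$-graph is $\mathcal{BP}_k$-free.
\item Deleting any single vertex leaves one component carrying all $(n-k+3)/(r-k+3)$ edges.
\item For $k=r+1$ this is about $n/2$ edges, while the formula gives at most $\frac{r-1}{2r}(n-1)+1$. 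For example, $r=5$, $k=6$, $n=103$ gives $50$ edges versus $40$.
\item For $k=5$ it gives $(n-2)/(r-2)$ edges versus $\lfloor\frac{n-1}{r}\rfloor+1$.
\end{itemize}
So the equality cannot hold throughout $r+1\ge k\ge 5$. Also, for $k=5$ you have $t=1$, and a block on $r+1$ vertices with one edge is disconnected, so neither your construction nor the paper's works there. You first need the correct hypotheses and formula, presumably from the original source. Any correct upper bound must then explain why the extremal structure is a single cut vertex rather than a kernel of up to $k-3$ vertices. Nothing in (i)--(iii) addresses this.

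\textbf{Further problems, even where the formula is right.}
\begin{itemize}
\item In (iii), the bound $\frac{(k-2)|B|}{r+1}$ you invoke is roughly twice the density you need, $\frac{t}{r}|B|\approx\frac{(k-2)|B|}{2r}$. Summing it over components cannot give $tq$.
\item In (i), edges need not meet a longest path $P$. In your own construction, a block not visited by $P$ may contain the edge $S_i$, which misses $V(P)$. So a linear-degree vertex on $P$ does not follow from $|V(P)|<k$.
\item In the lower bound, the leftover $n-1-qr$ lies in $\{0,\dots,r-2\}$, not $\{1,\dots,r-1\}$.
\item Also in the lower bound, for even $k$, attaching the leftover by an edge beyond the $t$ per block overshoots the claimed count. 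It generally also lets a path use $t+1$ edges on one side of $c$, giving $2t+2=k$ vertices. For $t\ge 2$, instead enlarge one block to $r+1+s$ vertices still covered by its $t$ edges.
\end{itemize}
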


This Theorem is sharp for infinitely many $n$ due to the following construction $H$. Let $n = tr + 1$ for some $t \in \mathbb N$. The vertex set $V(H)$ consists of $t$ sets $V_1, V_2, \ldots, V_t$ each of size $r+1$, and there exists a vertex $v \in V(H)$ with $V_i \cap V_j=\{v\}$ for all $i\neq j$. Inside each $V_i$ there are $\lfloor (k-2)/2 \rfloor$ edges, and if $k$ is odd, then (only) $V_1$ contains an extra edge. 

A Berge path in $H$ can only intersect at most $2$ $V_i$'s, and hence contains at most $2\lfloor (k-2)/2 \rfloor + \mathbf{1}_{2\nmid k}\leq k-2$ edges.

\begin{thm}[Zhang, Broersma, Győri, Tompkins, Wang \cite{ZBGTW}]\label{sunflowerthm}
Let $r\geq 3$. When $n$ is sufficiently large, we have
\[
\operatorname{ex}^{\mathrm{conn}}_r(n,\mathcal{BP}_{r+2})
= n-r+1.
\]
\end{thm}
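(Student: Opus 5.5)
We prove the lower bound with a construction and the upper bound by a structural argument in a longest Berge path; throughout, $H$ is a connected $r$-uniform $\mathcal{BP}_{r+2}$-free hypergraph on $n$ vertices.

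\textbf{Lower bound.} Take a sunflower-like hypergraph: a core set $C$ of $r-1$ vertices and $n-r+1$ further vertices $x_1,\dots,x_{n-r+1}$, with edges $e_i = C\cup\{x_i\}$. It has $n$ vertices and $n-r+1$ edges. Consecutive vertices of a Berge path are either both in $C$, or one lies in $C$ and the other is a petal $x_i$, since two distinct petals share no edge. So petals are never consecutive, and each petal used is joined to its neighbours on the path only through its own edge $e_i$. A petal in the interior of the path would need $e_i$ twice, so petals can appear only at the two ends. Hence the path has at most $(r-1)+2=r+1$ vertices, and the construction is $\mathcal{BP}_{r+2}$-free.

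\textbf{Upper bound, first step.} Fix a longest Berge path $P=v_1,e_1,\dots,e_{\ell-1},v_\ell$, so $\ell\le r+1$. For $n$ large, $H$ has many edges outside $E(P)$. Because $\ell-1\le r$, the standard endpoint extension argument applies. Any edge not in $E(P)$ that contains $v_1$ must lie inside $V(P)$, otherwise the path extends. If $\ell=r+1$, such an edge is forced to be $V(P)\setminus\{w\}$ for a single vertex $w$. We will also use rotations of $P$, which keep the same vertex set.

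\textbf{Upper bound, second step.} Show that the edges of $H$ are essentially all of the form $S\cup\{x\}$ for a fixed $(r-1)$-set $S$. Let $D$ be the set of vertices of degree at least $2$, and suppose it is large. From two edges sharing at most $r-2$ vertices, combined with connectivity, we aim to build a Berge path on $r+2$ vertices. The natural tool is a greedy path that alternates through edges meeting a common core, using that $n$ is large so that many edges hang off small intersections. The cleaner route is to count, for each vertex $x$, the number of edges containing $x$, and to show $\sum_e (|e|-1) \le n-1+(\text{small})$. This is in the spirit of a tree-like structure: in a connected hypergraph, if every edge contributes at least one new vertex in some ordering, then $|E|\le n-r+1$.

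\textbf{Upper bound, ordering argument.} Order the edges by a BFS from a fixed edge. The first edge contributes $r$ vertices, and each later edge must contribute at least one new vertex unless it is contained in the union of earlier edges. Bound the number of edges that contribute no new vertex ("redundant" edges). This is the main obstacle. Each redundant edge lies inside an already-covered region, and we expect such configurations together with the connections to the rest of the graph (available since $n$ is large) to produce a $\mathcal{BP}_{r+2}$. One then has to show they are absorbed, i.e. each forces some other edge to contribute at least $2$ new vertices.

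\textbf{Handling the obstacle.} First classify the blocks of $H$, meaning maximal subhypergraphs with no cut vertex in the Berge sense. Then prove that any block with at least two edges has at most $r+1$ vertices, and that the total count is at most $n-r+1$ except when a single large sunflower block exists. For that case we use a stability argument near the construction: every edge must contain the core $S$, since otherwise a path of length $r+2$ can be routed through two petals and the non-core edge. Finally, invoking $n$ sufficiently large rules out the small exceptional configurations.
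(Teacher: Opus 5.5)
Your lower bound is correct: in the sunflower every petal has degree $1$, so petals can only be endpoints of a Berge path, which therefore has at most $(r-1)+2=r+1$ vertices. The upper bound, however, has a genuine gap. Every step that would actually bound $|E(H)|$ is stated as an aim (``we aim to'', ``we expect'', ``one then has to show''). The step you yourself call the main obstacle, controlling edges that bring no new vertex in the BFS order, is never carried out. The block classification you invoke to handle it (``any block with at least two edges has at most $r+1$ vertices, except one large sunflower block'') is only asserted, and that is where the whole difficulty lies.

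Two of your intermediate targets are also false. First, $\sum_e(|e|-1)\le n-1+(\text{small})$ fails for the extremal sunflower itself, where the left side is $(r-1)(n-r+1)$. Second, the claim that every edge must contain the core $S$ is false. Add to the sunflower with core $S=\{s_1,\dots,s_{r-1}\}$ an edge $f=\{s_1,y_1,\dots,y_{r-1}\}$ on new vertices $y_j$. The vertices of degree at least $2$ are still exactly those of $S$, and interior vertices of a Berge path need degree at least $2$. So the hypergraph is connected and $\mathcal{BP}_{r+2}$-free, although $f\not\supseteq S$. Such edges cannot be excluded by routing a long path through them. They must be paid for in the count (here $f$ uses up $r-1$ vertices of degree $1$), and your outline has no charging mechanism of this kind.

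The paper does not reprove this theorem (it is quoted from Zhang et al.). However, the upper bound follows from Theorem~\ref{thm:main} with $k=r+3$: a $\mathcal{BP}_{r+2}$-free hypergraph is $\mathcal{BP}_{r+3}$-free, and $n-(k-3)+\binom{k-3}{r}=n-r+1$ there. That argument works with a longest Berge \emph{cycle} rather than a longest path, and the ingredient you are missing is an external cycle theorem.

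If $|E(H)|\ge n-r+2$ and $n\ge (r-1)^2$, Theorem~\ref{thm:r} gives a Berge cycle of length at least $r$. A longest one, $C$, then has length exactly $r$. In the paper this is Claim~\ref{cla: k-3}; in your $\mathcal{BP}_{r+2}$-free setting it is immediate, since a longer cycle plus one outside vertex already yields $r+2$ path vertices. Claim~\ref{cla:kr+3} then shows three things:
\begin{itemize}
\item every vertex outside $V(C)$ lies in at most one edge of $E(H)-E(C)$;
\item each such edge has a vertex of this kind not covered by $E(C)$;
\item the at least $r-1$ outside vertices covered by $E(C)$ lie in none of these edges.
\end{itemize}
Together these give $|E(H)|\le r+(n-r)-(r-1)=n-r+1$. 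This degree-one charging against a fixed cycle of length $r$ is what replaces your unresolved ``absorption of redundant edges''. Without some such ingredient, your BFS ordering cannot control edges lying inside already covered vertex sets.
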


The {\em sunflower} $\mathcal S_{n,r}$ with core of size $r-1$ (that is, $n-r+1$ edges which share a common $r-1$ vertices and each contain a degree 1 vertex) shows that this Theorem is best possible: in $\mathcal S_{n,r}$, only $r-1$ vertices have degree at least $2$, hence a longest Berge path has $r-1 + 2 =r+1$ vertices.

The case for $k$ large compared to $r$ was first proved by F\"uredi, Kostochka, and the second author~\cite{FKL} for $k \geq 4r$, then Gy\H{o}ri, Salia, and Zamora~\cite{GSZ} for $k \geq 2r+13$. Very recently, this was extended to $k \geq 2r+2$ by Zhao, Gerbner, and Zhou~\cite{ZGZ}. 
\begin{thm}[Zhao, Gerbner, Zhou \cite{ZGZ}]
For all integers $n$, $k$ and $r$, there exists $N_{r,k}$ such that if
$n>N_{r,k}$ and $k\geq 2r+2\geq 8$, then
\[
\operatorname{ex}^{\mathrm{conn}}_r(n,\mathcal{BP}_k)
=
\binom{\left\lfloor k/2\right\rfloor-1}{r-1}
\left(n-\left\lceil k/2\right\rceil\right)
+
\binom{\left\lceil k/2\right\rceil}{r}.
\]
\end{thm}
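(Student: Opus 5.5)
The lower bound comes from the construction: fix a set $A$ of $\lfloor k/2\rfloor-1$ vertices and a set $B$ of $\lceil k/2\rceil$ vertices disjoint from $A$, and put every $r$-subset of $A\cup B$ that meets $A$ in at least $r-1$ vertices, together with all $r$-subsets of $B$, adjusted so that the count matches $\binom{\lfloor k/2\rfloor-1}{r-1}(n-\lceil k/2\rceil)+\binom{\lceil k/2\rceil}{r}$. Concretely, I take all $\binom{\lceil k/2\rceil}{r}$ edges inside a $\lceil k/2\rceil$-set $B$, and for each of the remaining $n-\lceil k/2\rceil$ vertices $x$, the $\binom{\lfloor k/2\rfloor-1}{r-1}$ edges $S\cup\{x\}$ with $S\subseteq A$, where $A\subseteq B$ has size $\lfloor k/2\rfloor-1$. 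Every Berge path alternates between $A$ and non-$A$ vertices, except inside $B$. Outside $A$, consecutive path vertices must be linked through edges meeting $A$, so a path uses at most $2|A|+1$ vertices plus the bounded contribution from $B$. A short count gives fewer than $k$ vertices. The hypergraph is connected, so this gives the lower bound, and I will not belabor it.

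For the upper bound I would follow the standard reduction used by F\"uredi--Kostochka--Luo and by Gy\H{o}ri--Salia--Zamora. Take a connected $\mathcal{BP}_k$-free $r$-uniform $H$ with the claimed number of edges plus one. Pass to a maximal structure: fix a longest Berge path $P$ (fewer than $k$ vertices), and study how the other vertices attach to it. Since $n$ is large and $|E(H)|$ is linear in $n$ with slope $\binom{\lfloor k/2\rfloor-1}{r-1}$, there must be many vertices of degree at least $\binom{\lfloor k/2\rfloor-1}{r-1}$ while the number of vertices of large degree is bounded.

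The key step is a stability/structure lemma. Using the Kopylov-type bound (Theorem~\ref{Kopylovthm}) on an auxiliary graph obtained by choosing representative pairs from edges (a system of distinct representatives, valid since $k\ge 2r+2$ gives room), I show that there is a set $A$ of exactly $\lfloor k/2\rfloor-1$ vertices such that almost all edges consist of one outside vertex plus $r-1$ vertices of $A$. Otherwise one finds two outside vertices with $r-1$-neighbourhoods forcing a path alternating through $A$ with $\lfloor k/2\rfloor$ vertices from $A$ or two extra outside vertices, giving $k$ vertices. Then I bound the exceptional edges: edges missing $A$ or having at least two outside vertices must all lie in one component-like piece of bounded size, and connectivity plus the longest-path condition forces them to span at most $\lceil k/2\rceil$ vertices, contributing at most $\binom{\lceil k/2\rceil}{r}$ edges after trading against the deficit of low-degree vertices.

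The main obstacle is the exact accounting in this last step. I must show that each exceptional edge costs at least one of the "regular" edges $S\cup\{x\}$, or is confined to a set of $\lceil k/2\rceil$ vertices, and do so uniformly for odd and even $k$. I would attempt it by a weighting argument: assign each outside vertex $x$ weight equal to its degree. I would then prove that $x$ has weight exceeding $\binom{\lfloor k/2\rfloor-1}{r-1}$ only if $x$ lies in a single bounded "core" $B$, using that two such vertices in different places would extend the longest path by two.
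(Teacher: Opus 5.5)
\textbf{Verdict: genuine gap.} The lower bound is fine, but the upper-bound argument fails at its key step, and the remaining steps are not proved.

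Your lower-bound construction is correct. With $A\subseteq B$, it is exactly the paper's $H(n,k,r)$ with $L=A$ and, for odd $k$, $\{u_1,u_2\}=B\setminus A$. Your first sentence, which makes $A$ and $B$ disjoint, contradicts the concrete version.

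The upper bound breaks where you pick distinct representative pairs for the edges of $H$ and apply Kopylov's theorem to the resulting graph, asserting that $k\ge 2r+2$ ``gives room''. It does not. Put $m=\lfloor k/2\rfloor-1$. In $H(n,k,r)$, a vertex $x\notin B$ lies in $\binom{m}{r-1}$ edges but in only $m$ pairs of $\partial_2H$. Hence $|E(H)|\approx\binom{m}{r-1}n$ while $|E(\partial_2H)|\approx mn$.

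\begin{itemize}
\item For $r\ge 3$ and $k\ge 2r+4$ (so $m\ge r+1\ge 4$), we have $\binom{m}{r-1}\ge\binom{m}{2}>m+\tfrac12\ge (k-2)/2$. So no system of distinct representative pairs exists, even in the extremal example.
\item Moreover, if a counterexample did admit one, the representative graph would be $P_k$-free with more than $(k-2)n/2$ edges for large $n$. The Erd\H{o}s--Gallai bound would then finish at once. Your reduction therefore discards exactly the edges that carry all the difficulty.
\item For $k\in\{2r+2,2r+3\}$ the slopes agree, but the representative graph need not be connected even when $H$ is. So Theorem~\ref{Kopylovthm} does not apply to it.
\item Even for a connected representative graph with $k=2r+2$, Kopylov's bound $\binom{r+1}{2}+r(n-r-1)$ exceeds the target $r(n-r-1)+r+1$ by $(r+1)(r-2)/2>0$. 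It therefore cannot certify either the exact value or the structure you claim.
\end{itemize}

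The remaining steps restate the conclusion rather than prove it:
\begin{itemize}
\item The stability lemma (``otherwise one finds two outside vertices \ldots'') comes with no mechanism.
\item The claim that only boundedly many vertices have large degree is unjustified.
\item The exact additive term $\binom{\lceil k/2\rceil}{r}$ is left to an unspecified weighting argument, as you acknowledge. For odd $k$ this term requires locating the special pair $u_1,u_2$.
\end{itemize}

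The missing idea is a Kopylov-type argument carried out in the hypergraph itself, with degrees counted by hyperedges rather than by shadow pairs. The slope $\binom{\lfloor k/2\rfloor-1}{r-1}$ counts $(r-1)$-subsets of a small set, and no graph on the $2$-shadow can see it.

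The paper does not prove this theorem; it only cites it. Its concluding remarks note that it is a corollary of Zhao--Gerbner--Zhou's extension to $k\ge 2r+2$ of the F\"uredi--Kostochka--Luo bound for $2$-connected $r$-uniform hypergraphs without Berge cycles of length at least $k$. That is a genuinely hypergraph analogue of Kopylov's theorem, not a reduction to the graph version.
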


Their bound is sharp due to the following construction $H(n,k,r)$. The vertex set is partitioned into sets $L$ and $R$ with $|L|=\lfloor k/2\rfloor-1$. The edges consist of all $r$-sets with at least $r-1$
vertices in $L$ and at most one vertex from $R$. When
$k$ is odd, we also add the $r$-sets containing two fixed vertices
$u_1,u_2 \in R$ and $r-2$ vertices in $L$.

A Berge path in $H(n,k,r)$ contains at most $\lfloor k/2 \rfloor-1 +1+ \mathbf{1}_{2\nmid k}$ vertices from $R$ and hence contains at most $2(\lfloor k/2 \rfloor-1)+1 + \mathbf{1}_{2\nmid k} =k-1$ vertices.

\medskip

See~\cite{Ergemlidze, FKL, FKL2, GLSZ, KL} for related results on long Berge cylces in hypergraphs. 


\section{Main Theorem and proof discussion}

In~\cite{ZBGTW}, Zhang et al. conjectured that $\operatorname{ex}^{\mathrm{conn}}_r(n,\mathcal{BP}_k) =n-(k-3) + {k-3 \choose 2}$ for the cases $r+3\leq k \leq 2r$ when $n$ is sufficiently large compared to $r$ and $k$. Moreover, Zhao et al.~\cite{ZGZ} asked also about the case $k=2r+1$. 

In this paper, we confirm the conjecture of Zhang et al., providing the exact value for the remaining cases $r+3 \leq k \leq 2r+1$ and completing the bounds for $\operatorname{ex}^{\mathrm{conn}}_r(n, \mathcal{BP}_k)$ when $n$ is large.  
\begin{thm}\label{thm:main}
 Fix integers $k$ and $r$ such that $r+3 \leq k \leq 2r+1$. Then for $n \geq n_0$ with $n_0= \max\{(r-1)^2, k-3 + 2{k-3 \choose r-1}\}$,

 \[
\operatorname{ex}^{\mathrm{conn}}_{r}(n,\mathcal{BP}_k)
=
n-(k-3)+\binom{k-3}{r}.
\]
\end{thm}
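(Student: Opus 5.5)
We prove the lower and upper bounds separately, the upper bound being the substance.

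\textbf{Lower bound.} Take a set $A$ of $k-3$ vertices and put all $\binom{k-3}{r}$ $r$-subsets of $A$ as edges; this needs $k-3\ge r$, which holds since $k\ge r+3$. Fix an $(r-1)$-set $C\subseteq A$. For each of the $n-(k-3)$ vertices $x\notin A$, add the edge $C\cup\{x\}$. These pendant edges are new, so the hypergraph is simple, connected, and has $n-(k-3)+\binom{k-3}{r}$ edges. Every vertex outside $A$ has degree $1$, so it can only be an endpoint of a Berge path, and each of its neighbours lies in $C\subseteq A$. Hence a Berge path uses at most $k-3$ vertices of $A$ plus two endpoints outside $A$, that is, at most $k-1$ vertices. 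So the construction is $\mathcal{BP}_k$-free.

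\textbf{Upper bound.} Let $H$ be connected, $r$-uniform, $\mathcal{BP}_k$-free, on $n\ge n_0$ vertices. Take a longest Berge path $P=v_1,e_1,\dots,v_\ell$ with $\ell\le k-1$.

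\emph{Step 1 (leaf structure).} Define a light vertex as one of degree $1$ whose unique edge also contains some other vertex. We first show that every edge outside a bounded core contains at most one vertex not in $S=V(P)$, or else attaches to $P$ in a way that lengthens it. The tool is standard path-extension and rotation arguments, in the style of P\'osa/Kopylov applied to Berge paths:
\begin{itemize}
\item An edge $e\notin E(P)$ meeting $v_1$ together with an outside vertex $x$ gives a longer path $x,e,v_1,\dots$.
\item An edge containing two vertices outside $S$ and meeting $S$ lets us rotate the path and then extend it.
\end{itemize}
Using connectivity, every component of $H-S$ (in the Berge sense) attaches to $S$.

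\emph{Step 2 (counting).} Write $|E(H)| = |E_0| + |E_1|$, where $E_1$ consists of the edges containing a vertex of degree one outside $S$ and $E_0$ is the rest. We aim to show $|E_1|\le n-|S'|$ for a set $S'\supseteq S$ with $|S'|\le k-3$, and $|E_0|\le\binom{|S'|}{r}$, with equality forcing the construction.

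The key step is to show that the vertices of degree at least $2$ form a set $D$ with $|D|\le k-3$. If $|D|\ge k-2$, connectivity together with $n$ large should give a Berge path through $k-2$ vertices of $D$, extendable at both ends to pendant vertices, which yields $k$ vertices. This is where $k\le 2r+1$ enters. Since edges are large ($r\ge (k-1)/2$), two edges share many vertices, so edges inside $D$ connect $D$ richly. In the graph regime $k\ge 2r+2$, by contrast, the extremal structure changes to $H(n,k,r)$.

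Granting $|D|\le k-3$, each vertex outside $D$ has degree $\le1$. The edges containing such vertices number at most $n-|D|$ if each contains exactly one such vertex; edges with $t\ge2$ degree-one vertices cover them more efficiently and only decrease the count. The edges inside $D$ number at most $\binom{|D|}{r}\le\binom{k-3}{r}$. This gives $|E(H)|\le n-|D|+\binom{|D|}{r}$, which is maximized at $|D|=k-3$ because $n\ge n_0$ dominates the alternative $|D|$ small. Here $n_0\ge k-3+2\binom{k-3}{r-1}$ is used to handle the case where many edges contain two low-degree vertices.

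\textbf{Main obstacle.} The hard part is bounding $|D|$, or more precisely handling vertices of degree $\ge2$ that lie outside long paths, e.g. hanging structures of degree-$2$ vertices. If the crude ``$|D|\le k-3$'' claim fails, I would instead bound, for each block-like piece attached at a cut-structure, the quantity (edges) $-$ (new vertices). I would show it is at most $\binom{m}{r}-m$ for a piece spanning $m\le k-3$ vertices, and use convexity to conclude that a single piece of size $k-3$ is optimal once $n\ge n_0$. The condition $n\ge (r-1)^2$ would be used to guarantee that some vertex set of size $r-1$ is met by many pendant edges, which pins down the core $C$.
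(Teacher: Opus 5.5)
\textbf{There is a genuine gap in the upper bound.} Your lower-bound construction is correct; it is a member of the paper's family $\mathcal H_1$. Your final count is also fine, but only \emph{conditional} on the key step.

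The key step is that the set $D$ of vertices of degree at least $2$ satisfies $|D|\le k-3$. You never prove it, and the justification you offer is false. Connectivity and large $n$ do not force a Berge path through $k-2$ vertices of $D$, and large edges need not share many vertices. For $r\ge 3$, take a vertex $v$ and sets $P_1,\dots,P_t$ of size $r+1$ pairwise meeting only in $v$. Inside each $P_i$ put the two edges $P_i-\{a_i\}$ and $P_i-\{b_i\}$, where $a_i\neq b_i$ are both different from $v$.
\begin{itemize}
\item The hypergraph is connected.
\item Two consecutive edges of a Berge path lying in different $P_i$'s share a path vertex, which must be $v$. Since $v$ is used at most once, a Berge path uses at most four edges and has at most $5<k$ vertices.
\item Yet $|D|=1+t(r-2)$ is unbounded.
\end{itemize}
So $|D|\le k-3$ can only come from the edge-count hypothesis $|E(H)|>n-(k-3)+\binom{k-3}{r}$. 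Proving it under that hypothesis is essentially the whole theorem, and your outline contains no mechanism that uses the surplus edges.

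Step~1 has the same problem. Suppose an edge meets a longest path $P$ only at an interior vertex $v_i$ and contains two outside vertices. It yields Berge paths on $i+1$ and $\ell-i+2$ vertices, neither longer than $P$, so no rotation-extension contradiction is available in general. Your fallback, bounding (edges)$-$(new vertices) by $\binom{m}{r}-m$ on pieces of at most $k-3$ vertices, likewise gives no reason why a piece cannot be larger or carry more surplus edges.

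\textbf{What is missing is a device that converts surplus edges into long structure.} The paper uses the cycle theorems of Gy\H{o}ri--Lemons--Salia--Zamora. In the multi-hypergraph form, an $n$-vertex multi-hypergraph with edges of size at least $r$ and at least $n-1$ edges has a Berge cycle of length at least $r$, unless it is an $(r,r-1)$-block tree. The simple version, Theorem~\ref{thm:r}, is where $n\ge (r-1)^2$ enters when $k=r+3$; it is not used for pinning down a core.

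The paper's argument then runs as follows:
\begin{enumerate}
\item Take a longest Berge cycle $C$ and prove $\ell=|V(C)|\le k-3$.
\item Bound the edges inside $V(C)$ by $\max\{\ell,\binom{\ell}{r}\}$.
\item Distribute the remaining edges among outside connected components $B_i$. If $|E'(B_i)|\le |B_i|$ for every $i$, the bound follows directly.
\item Otherwise, a leaf-block pruning produces a $2$-connected piece $B$ with $|E'(B)|\ge |B|$. Applying the corollary to the traces $e\cap B$ (with one edge $e^*$ removed) gives an outside Berge cycle.
\item Menger's theorem glues this cycle to $C$, producing either a longer cycle or a $\mathcal{BP}_k$.
\end{enumerate}
Your outline needs an argument of this kind, showing that each outside component has at most as many edges as vertices, in place of the unproved bound on $|D|$.
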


This bound is achieved by the following family of hypergraphs.

\begin{construction}\label{mainextremal}The family $\mathcal H_1$ contains all $n$-vertex hypergraphs $H$ with an $r$-uniform clique $K_{k-3}^{(r)}$ on vertex set $X$ with size $k-3$, and each of the $n-(k-3)$ vertices $v$ outside of $X$ belongs to exactly one edge  which contains $v$ and some $r-1$ vertices in $X$.
\end{construction}

As a Berge path contains at most $2$ vertices with degree $1$, the longest Berge path in $H \in \mathcal H_1$ has $k-3+2 =k-1$ vertices.

We remark that the sunflower $\mathcal S_{n,r}$ belongs to $\mathcal H$ when $k=r+3$ (in this case, $n-(k-3) + {k-3 \choose r} = n-r+1$). Thus Theorem~\ref{thm:main} can also be viewed as a slight strengthening of Theorem~\ref{sunflowerthm}: any connected $r$-uniform hypergraph with more than $|E(\mathcal S_{n,r})|$ edges contain a Berge path on $r+3$ vertices.

\begin{figure}[htbp]
    \centering
    \includegraphics[scale=1]{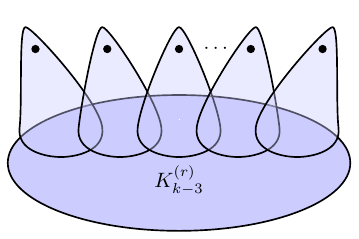} \qquad 
    \includegraphics[scale=1]{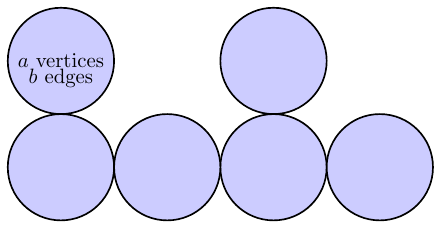}
    \caption{A hypergraph of $\mathcal H_1$ (left) and an $(a,b)$-block tree (right).}
    \label{fig:blocktree}
 \end{figure}


\subsection{Proof outline}
We begin by finding a longest Berge cycle $C$ of length at least $r$ in $H$ (which exists by some previously known results to be stated in the following subsection). If there exists two Berge cycles which are both vertex- and edge-disjoint with combined length at least $k$, we can use the connectivity of $H$ to find a Berge path on $k$ vertices.

Given $C$, we consider the subset $E'(H) \subseteq E(H)-E(C)$ of edges which contain at least one vertex outside of $V(C)$. By considering the partial edges $\{e \setminus V(C): e \in E'(H)\}$ we obtain a non-uniform multi-hypergraph on vertex set $V(H) - V(C)$ in which each edge contains at most $r$ edges. We show that in this new hypergraph, there exists a ``block" with many edges. We find this subhypergraph through a pruning process in which we repeatedly delete ``leaf blocks" containing few edges.
We find a Berge cycle in our block that combines with $C$ (via an application of Menger's Theorem) to form either a longer Berge cycle than $C$ or a Berge path on $k$ vertices.

We note that in some parts of the proof, the case $k=r+3$ behaves differently than $k \geq r+4$. In particular, in the extremal bound $n-(k-3) + {k-3 \choose r}$ we have ${k -3 \choose r} = 1$. The bound is considerably smaller than when $k \geq r+4$. As such, some parts of the proof for the case $k=r+3$ will be relegated to the Appendix, as to not distract from the main proof.

\subsection{Tools and Lemmas}
The {\em 2-shadow} of a hypergraph $H$ is the graph $\partial_2H$ on vertex set $V(H)$ in which $xy \in E(\partial_2 H)$ if and only if there is an edge $e \in E(H)$ containing $\{x,y\}$. 
For integers $a,b,r$ with $a \geq r$, an $r$-uniform hypergraph $H$ is called an {\em $(a,b)$-block tree} if $\partial_2H$ is connected and every $2$-connected block of $\partial_2H$ has exactly $a$ vertices and induces exactly $b$ edges in $H$. See Figure~\ref{fig:blocktree}.

We will use the following results in our proof to find long Berge cycles.



\begin{thm}[Győri, Lemons, Salia, Zamora \cite{GLSZ}]\label{thm:r}
Let $r\geq2$ and $n$ be positive integers.
 $H$ is an $r$-uniform, $n$-vertex hypergraph with no Berge cycle of length $r$ or longer, then
\[|E(H)| \leq \max\left\{
\left\lfloor\frac{n-1}{r}\right\rfloor(r-1),
\,
n-r+1
\right\}.\]
When $n-r+1>\left\lfloor\frac{n-1}{r}\right\rfloor(r-1)$ (roughly when $n > (r-1)^2)$ the only extremal graph is the sunflower $\mathcal S_{n,r}$. When
$\left\lfloor\frac{n-1}{r}\right\rfloor(r-1)>n-r+1$ and $r\mid(n-1)$ the only
extremal graphs are the $(r+1,r-1)$-block trees.
\end{thm}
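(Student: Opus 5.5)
The plan is to reduce to the $2$-connected case through the block structure of the $2$-shadow, bound the number of edges inside each block, and then optimize over block sizes.

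\textbf{Reduction to blocks.} We may assume $\partial_2 H$ is connected, since otherwise we add an edge between components. Adding an edge that joins two components creates no new Berge cycle, so a maximal counterexample is connected. Every edge $e\in E(H)$ induces a clique on its $r\ge 2$ vertices in $\partial_2H$, so $e$ lies inside a single $2$-connected block. Likewise, every Berge cycle of length at least $3$ has its defining vertices and edges inside one block. Hence, if the blocks are $B_1,\dots,B_m$ with $b_i=|B_i|$ and $h_i$ edges of $H$ inside $B_i$, then
\[
|E(H)|=\sum_i h_i,\qquad \sum_i (b_i-1)=n-1,
\]
and each $H[B_i]$ has no Berge cycle of length at least $r$. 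It therefore suffices to prove a per-block bound $h\le f(b)$ and to maximize $\sum f(b_i)$ subject to $\sum(b_i-1)=n-1$.

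\textbf{Per-block bounds.} I would aim for the following values of $f(b)$.
\begin{itemize}[noitemsep]
\item If $b\le r-1$, the block carries no edge, so $f(b)=0$.
\item If $b=r$, we have $f(r)=1$, by simplicity.
\item If $b=r+1$, we claim $f(r+1)=r-1$. Suppose there are $r$ edges on $r+1$ vertices. Each edge misses exactly one vertex, and the edges are distinct, so they miss distinct vertices. Choosing $r$ vertices, a Hall-type cyclic ordering then gives a Berge cycle of length $r$.
\item If $b\ge r+2$, we claim $f(b)=b-r+1$, the sunflower value.
\end{itemize}

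\textbf{Optimization.} With these bounds, the contribution per unit of budget $b-1$ is as follows:
\[
\frac{1}{r-1}\ \text{for } b=r,\qquad \frac{r-1}{r}\ \text{for } b=r+1,\qquad \frac{b-r+1}{b-1}\ \text{for } b\ge r+2.
\]
The last ratio increases with $b$. One also checks that gluing two large blocks into one only helps: since $(b-r+1)+(b'-r+1)\le (b+b'-1)-r+1$, it is never worse to have a single large block. Thus the optimum is one of two configurations:
\begin{itemize}[noitemsep]
\item all blocks have size $r+1$, giving $\lfloor (n-1)/r\rfloor (r-1)$ plus leftovers that contribute nothing more;
\item a single block of size $n$, giving $n-r+1$.
\end{itemize}
Mixtures are dominated because the objective is linear in the block counts. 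Tracking equality gives the stated extremal structures: the sunflower when $n-r+1$ is larger, and $(r+1,r-1)$-block trees when $r\mid n-1$. For the sunflower case, equality inside a large block must be shown to force the sunflower, which comes out of the stability part of the next step.

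\textbf{Main obstacle: the bound $h\le b-r+1$ for a $2$-connected block with $b\ge r+2$ vertices and no Berge cycle of length at least $r$.} I would first take a longest Berge cycle $C$ in the block.
\begin{itemize}[noitemsep]
\item If its length $\ell$ satisfies $\ell\le r-1$, I would work in the bipartite incidence graph, where a Berge cycle of length $t$ is a cycle of length $2t$. I would then run a Kopylov/P\'osa-type rotation argument: $2$-connectivity, via Menger, gives two disjoint paths from any outside vertex or edge back to $C$. Each edge has $r>\ell$ vertices, so it must leave $V(C)$. Using these two facts, either $C$ extends to a longer cycle or many edges are forced to share a common $(r-1)$-set.
\item The case $\ell=r-1$, with edges containing a common $(r-1)$-core, is exactly the sunflower.
\end{itemize}
I expect this to need a separate inductive argument on $b$: delete a vertex of degree at most $1$, or contract, and verify that the count $b-r+1$ drops by at most one at each step. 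This step carries essentially all the difficulty.
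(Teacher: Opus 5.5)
The paper gives no proof of this theorem; it is imported from Gy\H{o}ri--Lemons--Salia--Zamora. So your proposal has to stand on its own, and it has a genuine gap. The per-block bound you single out as the main obstacle, $h\le b-r+1$ for a $2$-connected block on $b\ge r+2$ vertices, is false for $r\ge 5$.

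A Berge cycle of length $L$ uses $L$ distinct edges, so any $r$-graph with at most $r-1$ edges has no Berge cycle of length $\ge r$. Take $r=5$ and the four sets $\{1,2,3,4,5\}$, $\{3,4,5,6,7\}$, $\{1,2,3,4,6\}$, $\{1,2,3,4,7\}$. The $2$-shadow contains two copies of $K_5$ sharing three vertices, so it is a single $2$-connected block on $b=7$ vertices. Yet it has $4>b-r+1=3$ edges.

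More generally, for $r\ge 5$ and $r+2\le b\le 2r-3$, any $r-1$ distinct $r$-subsets of a $b$-set, two of which cover it, give such a block with $r-1>b-r+1$ edges. For $r=10$, $b=12$ this is $9$ edges against your claimed $3$, and $9=\lfloor 11/10\rfloor\cdot 9$ is exactly the theorem's bound. So the block-tree term of the maximum is not confined to blocks of size exactly $r+1$. No longest-cycle/rotation argument can end in ``the edges share an $(r-1)$-core, so the block obeys the sunflower bound'', because that conclusion is false.

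Two later steps inherit the error. Your optimization step leans on this bound, since it is what makes every block larger than $r+1$ look strictly less efficient than ratio $(r-1)/r$. Your uniqueness argument for the $(r+1,r-1)$-block trees relies on it in the same way.

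What survives is the reduction itself. You do not need to make $\partial_2H$ connected. Your justification for that step, that adding an edge across two components creates no new Berge cycle, also fails whenever the new edge has two vertices in the same component. In general $\sum_i(b_i-1)=n-c\le n-1$, where $c$ is the number of components of $\partial_2H$.

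The function $m\mapsto\max\{\lfloor m/r\rfloor(r-1),\,m-r+2\}$ is nondecreasing and superadditive. Hence the theorem follows once it is proved for $2$-connected hypergraphs. Conversely, that case is necessary, since a single block may be all of $H$.

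So the block decomposition only reduces the problem to the $2$-connected case with the \emph{full} bound $\max\{\lfloor (b-1)/r\rfloor(r-1),\,b-r+1\}$. This includes the range $b\lesssim (r-1)^2$, where the first term dominates and, as the examples above show, $2$-connected blocks genuinely exceed the sunflower value. Your proposal has no argument for that regime. Nor does it exclude $2$-connected blocks on more than $r+1$ vertices attaining density $(r-1)/r$, which the uniqueness claim needs. That is where the real content of the theorem lies.
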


When we allow $H$ to have multi-edges, we obtain a single class of extremal examples.
\begin{thm}[Győri, Lemons, Salia, Zamora \cite{GLSZ}]\label{thm:multi cycle}
Let $k$, $n$ and $r$ be positive integers such that $2\le k\le r \leq n$. If $H$ is an $r$-uniform multi-hypergraph with no Berge cycle of length $k$ or longer, then
\[
|E(H)| \leq 
\left\lfloor\frac{n-1}{r-1}\right\rfloor(k-1).
\]

If $r-1\mid(n-1)$ the only extremal graphs with $n$ vertices are the
$(r,k-1)$-block trees.    
\end{thm}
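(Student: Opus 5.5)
We would prove the statement by induction on $n$, reducing first to the case where the $2$-shadow is $2$-connected, and then peeling off a small piece of the hypergraph that contains few edges.

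\textbf{Reduction to $2$-connected shadow.} Every edge of $H$ spans a clique in $\partial_2 H$, so every edge lies inside a single $2$-connected block of $\partial_2 H$. Every Berge cycle of length at least $3$ also lies inside one block.
\begin{itemize}
\item If $\partial_2H$ is disconnected with components on $n_1,\dots,n_c$ vertices, apply induction to each component. Then
\[\sum_i\Big\lfloor\frac{n_i-1}{r-1}\Big\rfloor(k-1)\le \Big\lfloor\frac{n-c}{r-1}\Big\rfloor(k-1)\le\Big\lfloor\frac{n-1}{r-1}\Big\rfloor(k-1).\]
A component with fewer than $r$ vertices contains no edge, so it is harmless.
\item If $\partial_2H$ has a cut vertex, split $H$ into two sub-hypergraphs on $n_1$ and $n_2$ vertices with $n_1+n_2-1=n$. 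Since $(n_1-1)+(n_2-1)=n-1$, the same floor superadditivity gives the bound.
\item For the equality statement, the parts in these splittings must themselves be extremal. When $r-1\mid n-1$ this forces $r-1\mid n_i-1$ for every part, and so builds an $(r,k-1)$-block tree.
\end{itemize}
It therefore suffices to show the following: if $\partial_2H$ is $2$-connected and $H$ has no Berge cycle of length at least $k$, then $|E(H)|\le \lfloor (n-1)/(r-1)\rfloor (k-1)$, with equality only when $n=r$ and $H$ consists of $k-1$ copies of one $r$-set.

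\textbf{Base case $n=r$.} All edges are copies of a single $r$-set. Suppose there are $m\ge 2$ copies. Then cycling through $\min(m,r)$ of its vertices with distinct copies gives a Berge cycle of length $\min(m,r)$. Since $k\le r$, we get $m\le k-1$, as required.

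\textbf{Inductive step for $n>r$ with $2$-connected shadow.} The plan is to find a set $S$ of $r-1$ vertices that meets at most $k-1$ edges. Delete $S$ together with every edge meeting it. What remains is an $r$-uniform multi-hypergraph on $n-(r-1)$ vertices, and the first two bullet points apply to it, whether or not it is connected. This gives
\[|E(H)|\le (k-1)+\Big\lfloor\frac{n-r}{r-1}\Big\rfloor(k-1)=\Big\lfloor\frac{n-1}{r-1}\Big\rfloor(k-1).\]
To find $S$, take a longest Berge path $P=v_1,e_1,\dots,v_\ell$. By maximality, every edge containing $v_1$ lies inside $V(P)$. For each such edge $f$, let $v_j$ be the furthest vertex of $P$ in $f$. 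Then $f$ closes a Berge cycle $v_1,\dots,v_j,f,v_1$, provided $f\notin\{e_1,\dots,e_{j-1}\}$. It can be lengthened using $2$-connectivity, rerouting as in Pósa rotations.
\begin{itemize}
\item If $v_1$ has degree at least $k$, this produces a Berge cycle of length at least $k$.
\item The same applies after rotating the endpoint. So every vertex in the set $R$ of rotation endpoints has small degree, and the edges at these vertices are confined to an initial segment of $P$.
\end{itemize}
We would then take $S$ to be $r-1$ of the vertices of $R$ closest to the start of $P$. The cycle-closing argument should bound the total number of distinct edges meeting $S$ by $k-1$, by Kopylov-style counting of the indices $j$ at which edges close cycles.

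\textbf{Main obstacle.} The main difficulty is this last step. Bounding each vertex's degree by $k-1$ is easy, but bounding the number of edges meeting the union of $r-1$ vertices is not. The first thing we would try is the following. If the edges meeting $S$ number at least $k$, use a Hall-type or Menger-type matching in the incidence bipartite graph to route distinct edges through distinct vertices of $S$ and of $P$. This would produce a Berge cycle of length at least $k$. Berge cycles of length $\ell$ in $H$ correspond to cycles of length $2\ell$ in the incidence graph, so one could instead try to import Kopylov's $2$-connected circumference bound applied to this bipartite graph. Finally, the uniqueness claim comes from tracking equality in each step: equality in the inductive step forces the deleted piece to be an $r$-set carrying exactly $k-1$ edges, attached at one vertex, which yields the $(r,k-1)$-block trees.
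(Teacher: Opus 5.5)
\textbf{There is a genuine gap.} The paper does not prove this statement; it quotes it from \cite{GLSZ}. So your argument has to stand on its own, and it does not yet.

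The whole weight of the theorem rests on the step you leave open. You need that every $H$ with $2$-connected shadow, $n>r$, and no Berge cycle of length at least $k$ contains a set $S$ of $r-1$ vertices meeting at most $k-1$ edges (with multiplicity). Your reduction to a $2$-connected shadow and your base case $n=r$ are fine, but nothing in the proposal establishes this lemma, and the rotation sketch does not work as described.

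What the cycle-closing argument actually gives, using $k\le r$, is this: every edge at an endpoint $v_1$ of a longest Berge path lies in $\{e_1,\dots,e_{k-1}\}$, and the same holds for the vertices of $e_1\setminus V(P)$. That guarantees only $r-k+1$ controlled vertices, which is fewer than $r-1$ once $k\ge 3$.

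Rotations do not supply the rest. Take the sunflower $\mathcal S_{n,r}$ with $r\ge 3$, $k=r$, and at least $2r-1$ petals.
\begin{itemize}
\item A longest path is $P=a_1,f_1,c_1,f_2,\dots,c_{r-1},f_r,a_r$.
\item The only edge at $a_1$ is $f_1=e_1$, and $e_1\setminus V(P)=\emptyset$. So no rotation produces a new endpoint, and your set $R$ near the start of $P$ is just $\{a_1\}$.
\item A valid $S$ there is, for instance, $r-1$ leaves of petals not used by $P$. Their edges are not on $P$ at all.
\end{itemize}
So the edge count cannot come from confining edges to an initial segment of $P$.

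The alternatives you list are not developed either. In particular, Erd\H{o}s--Gallai or Kopylov applied to $I(H)$ ignores that hyperedge-nodes have degree $r$. It only yields $|E(H)|\le (2k-1)(n-1)/(2r-2k+1)$, which is weaker than the target for every $2\le k\le r$.

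\textbf{The equality part has further problems.} Your reduced claim, that equality with a $2$-connected shadow forces $n=r$, is false without the divisibility hypothesis. For $r=k=3$ and $n=4$, the edges $\{1,2,3\},\{1,2,4\}$ have a $2$-connected shadow and no Berge cycle of length $3$. Yet they give $2=\lfloor 3/2\rfloor\cdot 2$ edges, which is equality.

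When $r-1\mid n-1$ and $n>r$, you do need strict inequality, but your deletion step only yields $\le$. Suppose $S$ meets exactly $k-1$ edges and the remainder is an $(r,k-1)$-block tree. You must then still derive a contradiction from the $2$-connectivity of $\partial_2H$, for example by finding a long Berge cycle through the deleted edges. Your sentence asserting that the deleted piece is an $r$-set attached at one vertex is stated but not argued.
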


This implies the following easy corollary which we will use throughout our proof.

\begin{cor}\label{cor:multi cycle}
Let $n$ and $r$ be positive integers such that $n \geq r \geq 2$. If $H$ is an $n$-vertex multi-hypergraph such that each edge has cardinality at least $r$ and
\[
|E(H)|
\geq n-1\]
then either $H$ is $r$-uniform and an $(r,r-1)$-block tree on exactly $n-1$ edges, or $H$ contains a Berge cycle of length at least $r$. 
\end{cor}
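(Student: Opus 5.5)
The plan is to reduce to the $r$-uniform multi-hypergraph case and apply Theorem~\ref{thm:multi cycle} with $k=r$, then handle the equality case by hand.

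\textbf{Step 1 (shrinking edges).} For each edge $e\in E(H)$ fix an $r$-subset $S_e\subseteq e$, and let $H'$ be the $r$-uniform multi-hypergraph on $V(H)$ with edges $\{S_e: e\in E(H)\}$; note $|E(H')|=|E(H)|$. Any Berge cycle in $H'$ is a Berge cycle in $H$ of the same length, obtained by replacing each $S_e$ with $e$; the edges stay distinct because they are indexed by $e$. So we may assume $H'$ has no Berge cycle of length at least $r$.

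\textbf{Step 2 (counting).} Theorem~\ref{thm:multi cycle} with $k=r$ gives
\[
n-1\le |E(H')| \le \left\lfloor\tfrac{n-1}{r-1}\right\rfloor (r-1)\le n-1 .
\]
Hence equality holds throughout: $|E(H)|=n-1$ and $(r-1)\mid(n-1)$. By the uniqueness part of Theorem~\ref{thm:multi cycle}, $H'$ is an $(r,r-1)$-block tree. In such a block tree every block $B$ has exactly $r$ vertices and carries $r-1$ edges of size $r$, so each of these edges equals $B$ (they are parallel copies). It remains to show $H=H'$, that is, every edge of $H$ has size exactly $r$.

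\textbf{Step 3 (equality case, the main point).} Suppose some $e$ has $|e|>r$, and pick $v\in e\setminus S_e$. Let $B=S_e$ be the block containing $S_e$. Since $\partial_2 H'$ is connected, there is a path from $v$ to $B$ in $\partial_2H'$; let $w\in B$ be the first vertex of $B$ it reaches. This gives a Berge path $P$ in $H'$ from $v$ to $w$ with at least one edge, and none of its edges is a copy of $B$. Indeed, every edge of $H'$ lies inside a single block, and $P$ meets $B$ only at $w$, so an edge containing two consecutive vertices of $P$ lies in a block other than $B$.

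Next, starting at $w$, use the $r-2$ copies of $B$ other than $S_e$ to walk through $r-1$ distinct vertices of $B$, ending at some $x\in B\subseteq e$ (when $r=2$ we simply take $x=w$). Finally close up with the edge $e$ from $x$ back to $v$. The edges used are distinct: those of $P$ lie outside $B$, then come the $r-2$ other copies of $B$, and then $e$. The vertices are distinct as well. The resulting Berge cycle in $H$ has length at least $1+(r-1)=r$, a contradiction.

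Therefore $H$ is $r$-uniform, $H=H'$, and $H$ is an $(r,r-1)$-block tree with exactly $n-1$ edges. The only delicate point is Step 3: choosing $w$ as the first vertex of $B$ on the connecting path is what guarantees that $P$ avoids all copies of $B$ and that the cycle is long enough.
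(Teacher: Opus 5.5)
Your proof is correct and follows the same route as the paper. Both shrink each edge to an $r$-subset, apply Theorem~\ref{thm:multi cycle} with $k=r$, and in the block-tree case close a Berge cycle of length at least $r$ through an oversized edge $e$.

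The differences are small improvements. You make explicit that equality forces $(r-1)\mid(n-1)$, which the uniqueness clause needs. You also gain length by traversing the block $S_e$ itself via its $r-2$ spare parallel copies, instead of the block containing $v$ as the paper does; this handles $r=2$ uniformly. One detail to state: take the connecting shadow path to be shortest, so that the edges you lift to are distinct.
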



\begin{proof}
Let $H$ be the multi-hypergraph such that each edge has cardinality at least $r$. Let $H'$ be the $r$-uniform multi-hypergraph obtained from $H$ by replacing every edge with an arbitrary $r$-subset of it.  Then $|E(H')|=|E(H)| \geq n-1$. Applying Theorem \ref{thm:multi cycle} to $H'$ with $k=r$, we have that $H'$ (and therefore $H$) contains a Berge cycle of at least $r$ vertices, or $H'$ is an $r$-uniform, $(r,r-1)$-block tree. In the case $H'$ is a $(r,r-1)$-block tree, if $H$ is $r$-uniform, then $H=H'$. If $H$ is non-uniform, then there exists an edge $f' \in E(H')$ such that $f' \subsetneq f$ for some $f \in E(H)$. Since $f$ has cardinality at least $r+1$, there exists a vertex $v \in f \setminus f'$ which belongs to a different block of the block tree than $f'$. 
If $(r,r-1) = (2,1)$, then it is easy to check that there is a Berge cycle of length at least $2$ containing $f$, the vertex $v$, and another edge containing $v$.

Now suppose $r \geq 3$. Let $u \in f'$ be a vertex not contained in the block containing $v$. Call this block $B$. 
Since $H'$ is a $(r,r-1)$-block tree, we can find a Berge path in $H'- \{f'\}$ (and therefore in $H -\{f\}$) from $v$ to $u$ with at least $r-1$ edges in $B$ and at least one edge outside of $B$.
Joining $v$ and $u$ by $f$, we obtain a Berge cycle with at least $r$ vertices in $H$.
\end{proof}

We will also use the following lemma which controls how hyperedges intersect the vertices of a longest Berge cycle. 
\begin{lem}\label{lem:intC}
    Let $C = v_1,v_2, \ldots, v_\ell$ be a cycle in a graph, and let $X, Y \subseteq V(C)$ such that $|X|,|Y| \geq q$ and for any distinct $v_i \in X, v_j \in Y, |i-j| \geq s+1\geq 2$. Then the following holds.
    \begin{enumerate}
        \item If $|X \cap Y| \geq q \geq 2$, then $\ell \geq q(s+1)$ with equality only if $|X| = |Y| = q$. 
        \item If $|X \cap Y| \leq q-1$, then $\ell \geq 2q + 2s + |X \cap Y|(s-1)$. 
    \end{enumerate}
\end{lem}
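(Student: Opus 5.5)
The plan is to read the condition ``$|i-j|\geq s+1$'' as a statement about the cyclic distance along $C$. Then I would turn both parts into a gap-counting argument. Place the marked vertices $X\cup Y$ around $C$ in cyclic order. Each gap between consecutive marked vertices has length at least $1$, and $\ell$ equals the sum of the gaps. Call a gap between consecutive marked vertices $u,w$ \emph{large} if one of $u,w$ lies in $X$ and the other lies in $Y$, as distinct vertices. By hypothesis every large gap has length at least $s+1$.

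For part 1, set $Z=X\cap Y$ with $|Z|\geq q\geq 2$. Any two distinct $z,z'\in Z$ are a pair with $z\in X$ and $z'\in Y$, so consecutive vertices of $Z$ around $C$ are at distance at least $s+1$. Summing over the $|Z|$ gaps gives $\ell\geq |Z|(s+1)\geq q(s+1)$.

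For the equality case, first note that $|Z|>q$ would already make the inequality strict. So assume $|Z|=q$ and that every gap between consecutive vertices of $Z$ is exactly $s+1$. Suppose some $x\in X\setminus Z$ exists. Then $x$ lies strictly inside some arc between consecutive $z,z'\in Z$, and both $z$ and $z'$ lie in $Y$. Hence $x$ must be at distance at least $s+1$ from each of them, which forces the arc to have length at least $2s+2>s+1$, a contradiction. The same argument applies to $Y\setminus Z$, so $|X|=|Y|=q$.

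For part 2, let $t=|X\cap Y|\leq q-1$ and $m=|X\cup Y|\geq 2q-t$. If $g$ denotes the number of large gaps, then $\ell\geq m+gs\geq 2q-t+gs$. The target $2q+2s+t(s-1)$ equals $2q-t+(t+2)s$, so it suffices to show $g\geq t+2$.

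Because $|X|,|Y|\geq q>t$, both $X\setminus Y$ and $Y\setminus X$ are nonempty. If $t=0$, going around the cycle we must switch from $X$-only vertices to $Y$-only vertices and back at least once each, which gives $g\geq 2$.

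If $t\geq 1$, every gap incident to a vertex of $X\cap Y$ is large, since such a vertex is in both $X$ and $Y$. The $t$ both-vertices form some number $\rho\geq 1$ of maximal runs among the $m>t$ marked vertices. The number of gaps incident to them is therefore $t+\rho$. If $\rho\geq 2$ we are done. If $\rho=1$, the complementary arc of marked vertices contains both an $X$-only and a $Y$-only vertex. Hence some consecutive $X$-only/$Y$-only pair occurs there, and it supplies an additional large gap not incident to the run. This gives $g\geq t+2$.

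I expect the only delicate step to be this bookkeeping in part 2: showing that there are at least $t+2$ large gaps rather than $t+1$. The single-run subcase is where the hypothesis $|X|,|Y|>t$ is genuinely needed.
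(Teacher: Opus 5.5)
Your proposal is correct and follows essentially the same route as the paper. Both arguments reduce part 2 to exhibiting $|X\cap Y|+2$ gaps of length at least $s+1$ between consecutive marked vertices (the paper's ``blank segments''), and then count $|X\cup Y|+(|X\cap Y|+2)s$ vertices; the paper finds these gaps a bit more directly than your run count, taking the gap clockwise after each vertex of $X\cap Y$, after an $X$-only vertex whose successor is in $Y$, and after a $Y$-only vertex whose successor is in $X$. Your explicit equality argument in part 1 and your cyclic-distance reading of $|i-j|$ fill in details the paper leaves implicit.
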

\begin{proof}
    Part 1 follows immediately from the condition $|i-j| \geq s+1$ for all $v_i,v_j \in X \cap Y$. Now assume $|X \cap Y |\leq q-1$. In particular, there exists vertices in $X - Y$ and $Y-X$. 

    We view $C$ to be oriented clockwise. Call a segment $\{v_{i+1}, \ldots, v_{i+s}\}$ of $s$ consecutive vertices in $C$ {\em blank} if it does not intersect $X \cup Y$. For each $v_i \in X \cap Y$, the segment $\{v_{i+1}, \ldots, v_{i+s}\}$ is blank. Moreover, there exists a vertex $v_j \in X - Y$ such that the next clockwise appearing vertex in $X\cup Y$ belongs to $Y$. Hence the segment $\{v_{j+1}, \ldots, v_{j+s}\}$ is also blank. Similarly, we may find such a blank segment for a vertex in $Y-X$. Altogether, we obtain at least $|X \cap Y| + 2$ blank segments of $s$ vertices.

    We now count the vertices in $C$. Define a mapping $f$ on $X \cup Y$ by $f(v_i) = \{v_i\}$ if $v_i \in (X-Y) \cup (Y-X)$ and $f(v_i) = \{v_i, v_{i+1}\}$ if $v_i \in X \cap Y$. The sets $\{f(v_i):v_i \in X \cup Y\}$ are pairwise disjoint, and the union of them contains $|X-Y| + |Y-X| + 2|X \cap Y| \geq 2q$ vertices. Moreover, each $f(v_i)$ with $v_i \in X \cap Y$ intersects exactly one blank segment in one vertex. Therefore

    \[\ell = |V(C)| \geq 2q + (|X \cap Y|+2) s - |X \cap Y| = 2q + 2s + |X\cap Y|(s-1).\]
\end{proof}
\section{Proof of Theorem \ref{thm:main}}
Construction~\ref{mainextremal} gives the lower bound. Now we will prove the upper bound. Suppose $H$ is a counterexample to Theorem~\ref{thm:main}. That is, $|E(H)| > {k-3 \choose r} + n-(k-3)$ but $H$ contains no Berge path on $k$ vertices. If $k \geq r+4$, we have $|E(H)| \geq n$ and thus $H$ contains a Berge cycle of length at least $r$ by Corollary~\ref{cor:multi cycle}. If $k = r+3$, we have $|E(H)| \geq n-r+1$ and thus $H$ contains a Berge cycle of length at least $r$ by Theorem~\ref{thm:r} for $n \geq n_0 \geq (r-1)^2$. 


Suppose the longest Berge cycle in $H$ has $\ell$ vertices. Among all Berge cycles of $\ell$ vertices, choose $C=v_1e_1 \ldots v_{\ell}e_{\ell}v_1$ such that it contains as many edges in the set $\{e \in E(H): e \subseteq V(C)\}$ as possible. We have $\ell \geq r$.

\begin{claim} \label{cla: edges inside C}
    $|E(C) \cup \{e \in E(H): e \subseteq V(C)\}| \leq \max\{\ell, {\ell \choose r}\}$.
\end{claim}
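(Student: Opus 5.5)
Write $E_{in} = \{e \in E(H): e \subseteq V(C)\}$ and $F = E(C) \cup E_{in}$. The aim is to bound $|F|$ by $\max\{\ell, \binom{\ell}{r}\}$. Since $H$ is simple, $|E_{in}| \leq \binom{\ell}{r}$ always, so the issue is the edges of $E(C)$ that are not contained in $V(C)$. We count $|F| = |E_{in}| + |E(C) \setminus E_{in}|$ and split according to how many cycle edges stick out of $V(C)$.

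\emph{Case 1: every edge of $C$ lies inside $V(C)$.} Then $E(C) \subseteq E_{in}$, so $|F| = |E_{in}| \le \binom{\ell}{r}$ and we are done.

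\emph{Case 2: some edge $e_i \in E(C)$ has a vertex $w \notin V(C)$, and $E_{in} \setminus E(C)$ is nonempty.} Take any $f \in E_{in} \setminus E(C)$. The plan is to use $f$ to reroute $C$ and contradict the choice of $C$. I would argue that $f$ can replace some $e_j$ with $e_j \not\subseteq V(C)$ in the cyclic sequence. If $f$ contains consecutive vertices $v_j, v_{j+1}$, then $v_1 e_1 \ldots v_j f v_{j+1} \ldots$ is a Berge cycle of the same length. It contains strictly more edges from $E_{in}$ provided $e_j \notin E_{in}$. Alternatively, one can use $w$ to lengthen $C$: insert $w$ via $e_i$ on one side and $f$ on the other. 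If $f$ contains $v_i$ or $v_{i+1}$, replace the segment $v_i e_i v_{i+1}$ by $v_i e_i w f v_{i+1}$, provided $f \ni v_{i+1}$. This gives a Berge cycle of length $\ell+1$, contradicting maximality. These substitutions force strong structural restrictions on $f$, relative to the positions of the outward-pointing edges. The goal is to show that such $f$ essentially cannot exist unless $|E(C)\setminus E_{in}|$ is small relative to how much $E_{in}$ falls short of $\binom{\ell}{r}$.

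\emph{Case 3: $E_{in} \subseteq E(C)$.} Then $F = E(C)$ and $|F| = \ell$.

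The main obstacle is Case 2, which mixes the two situations. There I would aim for the cleaner dichotomy: either $E_{in} \subseteq E(C)$, giving $|F| = \ell$, or every edge of $C$ is contained in $V(C)$, giving $|F| \le \binom{\ell}{r}$. To prove this, suppose $e_i \not\subseteq V(C)$ and $f \in E_{in}\setminus E(C)$. Since $|f| = r \ge 2$, $f$ contains two cycle vertices $v_a, v_b$. I first try the rotation/extremality argument on the cycle. The vertex $w \in e_i \setminus V(C)$ gives a Berge path $v_{i+1} \ldots v_i\, e_i\, w$ of length $\ell+1$ through all of $V(C)$. Combining this with $f$ through a P\'osa-type rotation should yield either a longer cycle or a same-length cycle using $f$ in place of $e_i$. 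The latter contradicts the choice of $C$ maximizing $|E(C)\cap E_{in}|$.

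If the clean dichotomy fails, I would fall back on a weaker counting bound. Let $t = |E(C)\setminus E_{in}|$. Then $|F| \le t + \min\{\binom{\ell}{r}, \ell - t + |E_{in}\setminus E(C)|\}$. It would then suffice to show $|E_{in}\setminus E(C)| \le \binom{\ell}{r} - \ell$ whenever $t \ge 1$, or that $E_{in} \setminus E(C) = \emptyset$. I expect the rotation step to be delicate, because $f$ need not contain consecutive cycle vertices. This is why I would try to exploit that $f$ has $r$ cycle vertices, and use Lemma~\ref{lem:intC} to force two of them to be close along $C$.
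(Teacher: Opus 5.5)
Your proposal has a genuine gap. Case 2 is the entire content of the claim, and neither route you offer for it works.

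\textbf{The rerouting step.} The replacement $v_i\,e_i\,w\,f\,v_{i+1}$ needs $w\in f$. That is impossible, because $f\subseteq V(C)$ and $w\notin V(C)$. For the same reason, $f$ can never be used at the endpoint $w$ in a P\'osa rotation.

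\textbf{The ``clean dichotomy''.} More seriously, the dichotomy you aim for does not follow from the extremal choice of $C$, and that choice is all a rotation argument can exploit. Take $r=3$, vertices $v_1,\dots,v_5,w$, and edges $e_1=\{v_1,v_2,v_3\}$, $e_2=\{v_2,v_3,v_4\}$, $e_3=\{v_3,v_4,v_5\}$, $e_4=\{v_2,v_4,v_5\}$, $e_5=\{v_5,v_1,w\}$, $f=\{v_2,v_3,v_5\}$. Since $w$ has degree $1$, every longest Berge cycle has vertex set exactly $\{v_1,\dots,v_5\}$. Every such cycle must use $e_5$, because $v_1$ lies only in $e_1$ and $e_5$. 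Hence $C=v_1e_1v_2e_2v_3e_3v_4e_4v_5e_5v_1$ is a legitimate choice of $C$. Yet $e_5\not\subseteq V(C)$ and $f\in E_{in}\setminus E(C)$.

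Note also that $f$ does contain the consecutive pair $\{v_2,v_3\}$. So forcing two vertices of $f$ to be close along $C$ (your proposed use of Lemma~\ref{lem:intC}) gains nothing when the corresponding cycle edge, here $e_2$, is already inside $V(C)$.

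\textbf{The fallback.} Your fallback only restates the claim, since $|F|=\ell+|E_{in}\setminus E(C)|$ holds identically.

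\textbf{The missing idea.} Apply your swap observation from the side of the outward edge, not from a given $f$. If $e_i\not\subseteq V(C)$, then every $r$-subset of $V(C)$ containing $\{v_i,v_{i+1}\}$ is either a non-edge or already in $E(C)$. Otherwise, substituting it for $e_i$ produces a longest cycle with more inside edges. This excludes $\binom{\ell-2}{r-2}$ sets from $E_{in}\setminus E(C)$ at once, giving $|F|\le \ell+\binom{\ell}{r}-\binom{\ell-2}{r-2}$. That already suffices whenever $\binom{\ell-2}{r-2}\ge\ell$, e.g.\ when $4\le r\le \ell-2$.

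The remaining cases need short separate arguments, as in the paper.
\begin{itemize}
\item For $\ell=r$, the only possible inside edge is $V(C)$ itself, which contains every consecutive pair.
\item For $r=3$ or $\ell=r+1$, suppose three edges of $C$ stick out. Then two of them, $e_i,e_j$ with $|i-j|\ge 2$, together exclude at least $\ell$ subsets.
\item If at most two edges stick out, either at least two $r$-subsets of $V(C)$ are non-edges, or almost all are present. In the latter case, a tight cycle on $V(C)$ built from inside edges contradicts the choice of $C$.
\end{itemize}
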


\begin{proof}

    If $\ell = r$, then there is at most one edge in $\{e \in E(H): e \subseteq V(C)\}$. We may assume that this edge is also used in $E(C)$, otherwise we can swap an arbitrary edge in $C$ with it to obtain a Berge cycle $C'$ such that $|\{e \in E(C'): e \subseteq V(C')\}| > |\{e \in E(C): e \subseteq V(C)\}|$, contradicting the choice of $C$.
    Therefore $|E(C) \cup \{e \in E(H): e \subseteq V(C)\}| = \ell$. 

    Now suppose $r \leq \ell-1$. 
 If every edge in $C$ is a subset of $V(C)$ then we are done. So assume some edge, $e_i$, contains a vertex outside of $C$. By the choice of $C$,  every $r$-subset in $V(C)$ containing $\{v_i, v_{i+1}\}$ either is not an edge of $H$ or is already used as another edge in $C$. There are ${\ell-2 \choose r-2}$ such subsets. 
    Then 
    \[|E(C) \cup \{e \in E(H): e \subseteq V(C)\}| \leq \ell +  {\ell \choose r} - {\ell-2 \choose r-2}.\]

    If $\ell \leq {\ell-2 \choose r-2}$, then we are done. This holds in particular if $4 \leq r \leq \ell-2$. In the remainder we assume either $r = 3$ or $r = \ell-1$.

Suppose there are at least $3$ edges in $E(C)$ that contain vertices outside of $V(C)$. In particular, we can find two such edges $e_i, e_j$ satisfying $|i-j| \geq 2$. Then every $r$-subset containing $\{v_i,v_{i+1}\}$ or $\{v_j, v_{j+1}\}$ must either be used in $E(C)$ or does not belong to $E(H)$. The number of such subsets is $2{\ell-2 \choose r-2} - {\ell-4 \choose r-4} \geq 2(
\ell-2) - (\ell-4) =\ell$ if $r \geq 4$ and $2{\ell-2 \choose r-2} = 2\ell-4 \geq \ell$ if $ r = 3$.  So
\[|E(C) \cup \{e \in E(H): e \subseteq V(C)\}| \leq \ell + {\ell \choose r} - \ell = {\ell \choose r}.\]

Finally, suppose there are at most $2$ edges in $E(C)$ containing vertices outside of $V(C)$. If at least 2 subsets of $V(C)$ of size $r$ do not belong to $E(H)$, then we obtain $|E(C) \cup \{e \in E(H): e \subseteq V(C)\}| \leq 2 + {\ell \choose r} - 2$, and we are done. 
Otherwise, by taking a permutation $w_1, \ldots, w_\ell$ of $V(C)$, we may assume that if any $r$-subset of $V(C)$ is missing from $E(H)$, then it is the set $f=\{w_1, w_3, w_4, \ldots, w_{r+1}\}$. Instead of $C$ we may take the tight cycle on $\ell$ vertices with all edges containing $r$ consecutive vertices in the permutation, that is edges of the form $\{w_i, w_{i+1}, \ldots, w_{i+r-1}\}$ (with indices mod $\ell$). Note that $f$ is not a set of this form when $\ell > r+1$, and all such sets are edges in $H$ since we assumed that $f$ was the only missing $r$-subset of $V(C)$. In this case, the Berge cycle contains more edges that are subsets of $V(C)$ than $C$ does, a contradiction. If $\ell = r+1$, then we can similarly create a Berge cycle $C'$ on $V(C)$ whose edges are all $r$ subsets of $V(C)$, or which contains exactly one edge which is not a subset of $V(C)$. By the choice of $C$, $|E(C) \cup \{e \in E(H): e \subseteq V(C)\}| \leq 1 + {\ell \choose r} - 1$.
\end{proof}

 Given any hypergraph $H$, its corresponding {\em incidence bipartite graph} is a bipartite graph $I = I(H)$ where $V(I)=V(H) \cup E(H)$. For $u \in V(H)$ and $e \in E(H)$, $ue \in E(I)$ if and only if $u \in e$. Observe that a cycle in $I$ of length $2s$ corresponds to a Berge cycle of length $s$ in $H$, and vice versa. We call a path in the incidence bipartite graph a {\em semi-path} in $H$. In a semi-path, an endpoint may either be a vertex or a hyperedge in $H$. In the case where both endpoints are vertices, the semi-path is a Berge path.

\begin{claim} \label{cla: k-3}
 $\ell \leq k-3$.   
\end{claim}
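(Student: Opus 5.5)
We argue by contradiction: assume $\ell \geq k-2$ and either exhibit a Berge path on $k$ vertices or show that $|E(H)|$ is at most the extremal bound. The case split is on $\ell$.

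\textbf{Case $\ell \geq k-1$.} Since $H$ is connected and $n \geq n_0 > \ell$, some vertex $u \notin V(C)$ lies in a common edge $f$ with some $v_i \in V(C)$. If $f \notin E(C)$, the semi-path $u, f, v_i, e_i, v_{i+1}, \ldots, e_{i-2}, v_{i-1}$ is a Berge path on $\ell+1 \geq k$ vertices. If $f = e_i$, we traverse $C$ in the other direction, starting $u, e_i, v_{i+1}, e_{i+1}, \ldots$, or $u, e_i, v_i, e_{i-1}, \ldots$, which avoids reusing $e_i$. Either way we again get $\ell+1$ vertices. If instead no vertex outside $C$ exists, then $n=\ell$, which contradicts $n \geq n_0$ (indeed $n_0 \geq k-3+2\binom{k-3}{r-1} > 2r+1 \geq k$). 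So $\ell \leq k-2$.

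\textbf{Case $\ell = k-2$: the path-extension step.} A Berge path on $k$ vertices needs two extra vertices. We first use $C$ to show that the outside vertices interact with $C$ only through single edges.
\begin{itemize}
\item[(a)] Suppose an edge $f \notin E(C)$ contains $v_i$ and an outside vertex $u$, and $u$ lies in another edge $g \neq f$ containing an outside vertex $w \neq u$. If $g \notin E(C)$, the path $w, g, u, f, v_i, e_i, \ldots, v_{i-1}$ has $\ell+2 = k$ vertices. If $g = e_j$ is a cycle edge, we rotate the traversal of $C$ so that $e_j$ is the omitted edge; this needs care, so we handle it by choosing the starting point of the cycle traversal appropriately.
\item[(b)] If two distinct outside vertices $u, w$ attach via distinct edges to $C$ at vertices $v_i, v_j$ with $i \neq j$, we can often route $u \ldots$ around $C$ to $\ldots w$. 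With $|i-j|$ small, we lose only one or two cycle vertices.
\end{itemize}
Since $\ell = k-2$, each step needs exactly two outside vertices. From this we aim to deduce a structural statement: essentially every edge containing an outside vertex contains exactly one outside vertex, apart from those lying in $E(C)$. Consequently the outside vertices have degree $1$ outside of $E(C)$.

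\textbf{Counting edges in the structured case.} Under that structure,
\[
|E(H)| \leq (n-\ell) + |E(C) \cup \{e \subseteq V(C)\}|.
\]
By Claim~\ref{cla: edges inside C} the second term is at most $\max\{\ell, \binom{\ell}{r}\}$ with $\ell = k-2$. This gives a contradiction only if we additionally exploit the fact that a single outside vertex already yields a path on $\ell+1 = k-1$ vertices. A path on $k$ vertices then arises once an outside vertex's edge meets $C$ in a suitable way together with a chord-like edge inside $V(C)$.

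\textbf{Main obstacle.} The difficulty lies in case $\ell = k-2$: the plain path extension only gives $k-1$ vertices, so we need the pigeonhole/density argument. Here I would use $n \geq n_0$. The set of edges containing an outside vertex and meeting $C$ has size about $n - \ell$ plus the excess. Each outside vertex $u$ lies in an edge $f_u$ with $|f_u \cap V(C)| \geq 1$. Since there are more than $2\binom{k-3}{r-1}$ outside vertices, two of them, $u$ and $w$, share the same trace $f_u \cap V(C)$ or attach at different cycle vertices $v_i \neq v_j$. Taking the $C$-path from $v_i$ to $v_j$ the long way, combined with Lemma~\ref{lem:intC}, then gives $\ell + 2 - $ (a small loss) vertices. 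I would try to show this loss is $0$ by choosing $v_i, v_j$ consecutive on $C$, which yields a Berge path $u, f_u, v_i, \ldots, v_{i+1}, f_w, w$ on $\ell + 2 = k$ vertices. The remaining subcase, where all attachments go through one vertex of $C$ and use at most one edge each, is exactly where $H$ is forced to be bounded by the extremal count.
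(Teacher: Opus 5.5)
\textbf{Verdict: there is a genuine gap.} Your treatment of $\ell\ge k-1$ is fine. The case $\ell=k-2$, which is the whole content of the claim, is not proved, and the structure you aim for is backwards. You want to show that essentially every edge meeting $V(H)-V(C)$ has exactly one outside vertex. Even granting that outside vertices then have degree $1$, this only gives $|E(H)|\le (n-\ell)+\binom{k-2}{r}$, which exceeds $n-(k-3)+\binom{k-3}{r}$ by $\binom{k-3}{r-1}-1$, as you noticed.

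\textbf{The missing idea.} Let $E'(H)$ be the non-cycle edges having an outside vertex. The paper proves the reverse statement:
\begin{enumerate}
\item[(i)] every $f\in E'(H)$ meets $V(C)$;
\item[(ii)] no outside vertex lies in two edges of $E'(H)$;
\item[(iii)] at most one edge of $E'(H)$ has exactly one outside vertex.
\end{enumerate}
For (ii) and (iii), take two edges of $E'(H)$ each with a single outside vertex (shared, or distinct $u_1\ne u_2$). They cannot attach at consecutive vertices $v_i,v_{i+1}$ of $C$, or you get a longer cycle or the path $u_1,e,v_i,\dots,v_{i+1},e',u_2$ on $\ell+2=k$ vertices. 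Lemma~\ref{lem:intC} with $q=r-1$, $s=1$ then forces $\ell\ge 2r\ge k-1$, unless the two traces on $C$ coincide; simplicity rules out equal traces in the shared case.

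Given (i)--(iii), every other edge of $E'(H)$ uses at least two private outside vertices. So $|E'(H)|\le 1+\frac{n-\ell-1}{2}$, and with Claim~\ref{cla: edges inside C},
$$|E(H)|\le\binom{k-2}{r}+\frac{n-(k-3)}{2}.$$
This is where $n_0$ enters, through $\binom{k-3}{r-1}\le\frac{n-(k-3)}{2}$, not through a pigeonhole on traces.

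\textbf{Why your closing step fails.} The dichotomy ``two outside vertices either share a trace or attach at different cycle vertices'' is always true and yields nothing. Nothing forces two attachment points to be consecutive on $C$; for example, all outside edges might meet $C$ only inside $\{v_1,v_3\}$. So the ``remaining subcase'' is not ``all attachments through one vertex''.

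\textbf{The delicate configuration you do not address.} Two single-outside-vertex edges can have the same trace $X=\{v_2,v_4,\dots,v_{2r-2}\}$. This is possible only when $\ell=2r-2$, i.e.\ $k\in\{2r,2r+1\}$, and there Lemma~\ref{lem:intC} gives no contradiction. The paper needs an extra argument:
\begin{itemize}
\item no edge of $E(H)-E(C)$ contains $\{v_{i+1},v_{j+1}\}$ or $\{v_{i-1},v_{j-1}\}$ for $v_i,v_j\in X$;
\item this leaves at most $0$ (for $k=2r$) or $r+1$ (for $k=2r+1$) non-cycle edges inside $V(C)$;
\item a direct count of $|E(H)|$ then finishes, with $r=3$ treated in the appendix.
\end{itemize}

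\textbf{A smaller point on (a).} The sub-case $g=e_j$ cannot be fixed by rotating the traversal of $C$: starting at $v_i$ you can only omit $e_{i-1}$ or $e_i$. It is also unnecessary, since cycle edges are accounted for separately by Claim~\ref{cla: edges inside C}.
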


\begin{proof}
Suppose $\ell \geq k-2$. First we must have $\ell < k-1$ since otherwise we could find a Berge path of $k$ vertices by the connectivity of $H$. Thus, we have $\ell=k-2$. Define \[E'(H)=\{e \in E(H)-E(C): e -V(C) \neq \emptyset\}.\] We first show that $e \cap V(C) \ne \emptyset$ for all $e \in E'(H)$. Suppose on the contrary that $e \cap V(C)=\emptyset$ for some $e \in E'(H)$, then by the connectivity of $H$, there exists a semi-path from $V(C) \cup E(C)$ to the vertices in $e$. Let $P$ be a shortest such semi-path. If the first element of $P$ is an edge, let it be $e_i$. If the first element of $P$ is a vertex, let it be $v_i$. Let $u \in e$ be the last element of $P$. Since $r \geq 2$, there exists another vertex $u' \in e$. We observe that \[v_{i+1}, e_{i+1}, \ldots, e_{i-1}, v_i, P, u, e, u'\]
is a Berge path with at least $\ell+2=k$ vertices, a contradiction. Thus, we have $e \cap V(C) \ne \emptyset$ for all $e \in E'(H)$. We show the following. 

\begin{equation}\label{eq:noint}
\mbox{For any two edges $f_1, f_2 \in E'(H)$, we have $(f_1 \cap f_2) -V(C) = \emptyset$.}\end{equation} Suppose there exists two edges such that $(f_1 \cap f_2) -V(C) \ne \emptyset$. Let $u \in (f_1 \cap f_2) -V(C)$.

\textbf{Case 1}: $|f_1 - V(C)| \geq 2$ or $|f_2 - V(C)| \geq 2$. 

Without loss of generality assume $|f_2 - V(C)| \geq 2$. Then there exists another vertex $u' \in f_2-V(C)$. Let $v_i \in f_1 \cap V(C)$. We observe that $u', f_2, u, f_1, v_i, e_i, \ldots, e_{i-2}, v_{i-1}$ is a Berge path of $\ell+2=k$ vertices, a contradiction. 

\textbf{Case 2}: $|f_1-V(C)| =|f_2 - V(C)|=1$.

We observe that if $v_i \in f_1 \cap V(C)$, then $v_{i-1},v_{i+1} \notin f_2 \cap V(C)$. Suppose on the contrary that $v_i \in f_1 \cap V(C)$ and $v_{i+1} \in f_2 \cap V(C)$, then $v_i, f_1, u, f_2, v_{i+1}, e_{i+1}, \ldots, e_{i-1}, v_i$ is a Berge cycle with $\ell+1$ vertices, a contradiction. Similar for $v_{i-1} \in f_2 \cap V(C)$. Hence, we have $|i-j| \geq 2$ for any distinct $v_i \in f_1 \cap V(C)$ and $v_j \in f_2 \cap V(C)$. Let $X=f_1 \cap V(C)$ and $Y=f_2 \cap V(C)$. Since $H$ has no multi-edges, we have $X \neq Y$, and hence $|X \cap Y| \leq (r-1)-1$. Applying Lemma \ref{lem:intC} with $q=r-1$ and $s=1$, we obtain $\ell \geq 2(r-1)+2 \geq k-1$, a contradiction. This proves~\eqref{eq:noint}. 

Next we will prove that most edges in $E'(H)$ contain at least $2$ vertices outside of $C$.

\begin{equation}
\label{eq:1edge1}\mbox{There is at most one edge $e \in E'(H)$ such that $|e - V(C)|=1$.}
\end{equation}

Suppose there are two distinct edges $e,e'\in E'(H)$ with $|e - V(C)|=1$ and $|e' - V(C)|=1$. By~\eqref{eq:noint}, there are two distinct vertices $u_1 \in e - V(C)$ and $u_2 \in e' - V(C)$. We observe that if $v_i \in e \cap V(C)$, then $v_{i-1},v_{i+1} \notin e' \cap V(C)$. Otherwise if $v_i \in e \cap V(C)$ and $v_{i+1} \in e' \cap V(C)$, then $u_1, e, v_i, e_{i-1}, \ldots, e_{i+1}, v_{i+1}, e', u_2$ is a Berge path of $\ell+2=k$ vertices, a contradiction. Similar for $v_{i-1} \in e' \cap V(C)$.

As before, we have $|i-j| \geq 2$ for any distinct $v_i \in e \cap V(C)$ and $v_j \in e' \cap V(C)$. Let $X=e \cap V(C)$ and $Y=e' \cap V(C)$. If $|X \cap Y| \leq (r-1)-1$. Applying Lemma \ref{lem:intC} with $q=r-1$ and $s=1$, we obtain $\ell \geq 2(r-1)+2 \geq k-1$, a contradiction. Thus, $X \cap Y=X=Y$ with size $r-1$. Then applying Lemma \ref{lem:intC}, we have $k-2=\ell \geq 2(r-1)$, so $2r \leq k \leq 2r+1$. By symmetry we may assume $X = Y = \{v_2, v_4, \ldots, v_{2r-2}\}$.

For distinct $i, j$, with $\{v_i, v_j\} \subseteq X$, suppose there exists some $e'' \in E(H)-E(C)$ for which $\{v_{i+1},v_{j+1}\} \subseteq e''$. Note that $e'' \notin\{e, e'\}$ since otherwise this would violate $|i-j| \geq 2$ for $j = i+1$. Then \[u_1, e, v_i, e_{i-1}, \ldots, e_{j+1}, v_{j+1}, e'', v_{i+1}, e_{i+1}, \ldots, e_{j-1}, v_j, e', u_2\]
is a Berge path with $\ell+2=k$ vertices, a contradiction. 
Therefore no such $e''$ exists. Similarly, $\{v_{i-1},v_{j-1}\} \not\subseteq e''$ for all $e'' \in E(H) - E(C)$. 

Observe that $V(C) - X = \{v_1, v_3, \ldots, v_{2r-3}\}$ if $k = 2r$ and $V(C) - X = \{v_1, v_3, \ldots, v_{2r-3}, v_{2r-1}\}$ if $k=2r+1$. It is easy to see that for $k = 2r$, any pair of elements in $V(C) - X$ correspond to a pair $\{v_{i+1}, v_{j+1}\}$ such that $v_i, v_j \in X$. Similarly for $k = 2r+1$ any pair of vertices in $V(C) - X$ correspond to a pair $\{v_{i+1}, v_{j+1}\}$, or $\{v_{i-1}, v_{j-1}\}$, unless the pair is exactly $\{v_{2r-1}, v_1\}$. 

Finally we will prove that
\begin{equation}\label{eq:noCedges}
    \mbox{$|E(H)| \leq |E'(H)| +| E(C)|$ if $k = 2r$, and $|E(H)| \leq |E'(H)| + |E(C)| + r+1$ if $k = 2r+1$.}
\end{equation} 
Let $s$ be the number of edges in $E(H) -E(C)$ that are contained in $V(C)$. Then~\eqref{eq:noCedges} is equivalent to proving $s=0$ if $k=2r$ and $s \leq r+1$ if $k =2r+1$. Suppose not, and let $f$ be such an edge.

Since $|f| =r$, $f$ is one of two types:
\begin{itemize}
    \item Type 1: $f$ contains $r-1$ vertices in $X$ and one vertex in $V(C) - X$,
    \item Type 2: $k = 2r+1$ and $f$ contains $\{v_{2r-1}, v_1\}$ and $r-2$ vertices in $X$.
\end{itemize} In particular, there are at most $|V(C)-X|$ type 1 edges, and at most ${r-1 \choose r-2} = r-1$ type 2 edges. If $k=2r+1$, since $s > (r-1)+2$, let us also assume $f$ is type $1$, and $f \notin \{\{v_1\} \cup X, \{v_{2r-1}\} \cup X\}$. 

Let $\{v_a\} = f - X$. Then $v_a \in\{v_3, v_5, \ldots, v_{2r-3}\}$ and $v_{a-1}, v_{a+1} \in X$. 
Let $C'$ be the Berge cycle of length $\ell$ obtained by replacing edge $e_a$ in $C$ with $f$. By the choice of $C$ to maximize $|\{e \in E(C): e\subseteq V(C)\}|$, we must have $e_a \subseteq V(C)$. Moreover, since $f \neq e_a$, $e_a$ contains another vertex $v_b \in V(C) - X$, and either $v_{b-1} \in A$ or $v_{b+1} \in X$ (or both).
If $v_{b+1} \in X$, then the Berge path

\[u_1, e, v_{a+1}, e_{a+1}, \ldots, v_{b}, e_a, v_a, e_{a-1}, v_{a-1}, \ldots, v_{b+1}, e', u_2\]

contains $k$ vertices. Similar holds if $v_{b-1} \in A$.  This proves~\eqref{eq:noCedges}.

Since we have shown that each vertex in $v \in V(H) -V(C)$ has degree at most $1$ in $E'(H)$, when $k=2r$ we obtain

\[|E(H)| \leq |E'(H)| + |E(C)| \leq  (n-(k-2))+(k-2),\]
which is at most $n-(k-3) + {k-3 \choose r}$ if $k-3 \geq r+1$. The remaining case $k-3=r$ (so $k=2r=r+3$, i.e., $r=3$) is proved in the Appendix.

When $k=2r+1$, 
\[|E(H)| \leq |E'(H)| + |E(C)|+ r+1 \leq (n-(k-2)) + (k-2) + r+1 = n+r+1.\]

We have $(k-3) - r =2r-2 - r \geq 1$. Suppose first that $k-3 \geq r+2$ (and hence $r \geq 4$ since $k=2r+1$). Then ${k-3 \choose r} \geq {k-3 \choose 2}$. 

We have \[n + r+1 \leq n-(k-3)+{k-3 \choose 2} = n+\frac{k-3}{2} \left((k-4)-2\right) \Leftrightarrow r+1 \leq \frac{k-3}{2} \left((k-4)-2\right).\]
Note that $(k-3)/2 = (2r-2)/2 = r-1$ and so this is equivalent to $1 + 2/(r-1) \leq k-6$. We have $k -6 \geq 2(4)+1-6 =3$, thus the inequality holds. The remaining case $k-3 = r+1$, (so $r = 3$) is proved in the Appendix. This proves~\eqref{eq:1edge1}.




Hence, there is at most one edge $e \in E'(H)$ such that $|e - V(C)|=1$. Say $e -V(C) = \{u\}$. All other edges in $E'(H)$ contain 2 or more vertices outside of $V(C)$, none of which are $u$, and these vertices have degree 1 in $E'(H)$. By Claim~\ref{cla: edges inside C},
\[|E(H)|\leq |E(C) \cup \{e\in E(H): e \subseteq V(C)\}| + |E'(H)| \]\[\leq \max\{k-2, {k-2 \choose r}\}+1+ \dfrac{n-(k-2)-1}{2}  = {k-2 \choose r} + \frac{n-(k-3)}{2}.\]
This is at most $n-(k-3)+{k-3 \choose r}$ whenever ${k-3 \choose r-1} \leq \frac{n-(k-3)}{2}$ which holds for $n \geq n_0 \geq k-3 + 2{k-3 \choose r-1}$.

\end{proof}



Given the Berge cycle $C$, for any subset of vertices $B \subseteq V(H)-V(C)$, we define \[E'(B)=\{e \in E(H)-E(C):  e \nsubseteq V(C), e - V(C) \subseteq B\}.\]

 We say $H$ is {\em $k$-connected} if its incidence bipartite graph $I=I(H)$ is $k$-connected. A cut vertex in the graph $I$ corresponds to either a vertex or a hyperedge in $H$. We call such a vertex or hyperedge a {\em cut element} in $H$. Note that  if $x$ is a cut hyperedge in $H$, then deleting the vertex $x$ from the incidence bipartite graph $I$ is equivalent to deleting the hyperedge $x$ in $H$. On the other hand, if $x$ is a cut vertex in $H$, then deleting $x$ in $I$ is equivalent to deleting the vertex in $x$ in $H$ and ``shrinking" all hyperedges that contained $x$ (rather than deleting them). This yields a non-uniform hypergraph.

We call a set of vertices $B \subseteq V(H)-V(C)$ an {\em outside connected component} if it is a maximal set such that the incidence bipartite graph on the vertex set $B \cup E'(B)$ is connected. We call a semi-path an {\em outside semi-path in $B$} if its internal elements belong to $B \cup E'(B)$. 
Similarly, we call a Berge path an {\em outside Berge path in $B$}  if its internal vertices belong to $B$ and hyperedges belong to $E'(B)$. We call a Berge cycle an {\em outside Berge cycle in $B$} if all of its vertices belong to $B$ and hyperedges belong to $E'(B)$. When $B$ is clear from the context, we will simply refer to these as outside semi-paths, outside Berge paths, and outside Berge cycles.

\begin{claim} \label{cla:bad edge}
If $B$ is an outside connected component, then there is at most one edge $e \in E'(B)$ such that $|e \cap V(C)| > \frac{\ell}{2}$.  Moreover, if $r= \ell/2+1$, then there is at most one edge $e \in E'(B)$ with $|e \cap V(C)| = \frac{\ell}{2}$.
\end{claim}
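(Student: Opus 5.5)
Suppose two distinct edges $f_1,f_2\in E'(B)$ both meet $V(C)$ in more than $\ell/2$ vertices. We aim for a Berge cycle longer than $C$, contradicting the maximality of $\ell$; a Berge path on $k$ vertices would be an equally good contradiction. Set $X=f_1\cap V(C)$ and $Y=f_2\cap V(C)$.

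First we build an outside connection. Since $B$ is an outside connected component, the incidence graph on $B\cup E'(B)$ is connected. Choose $u_1\in f_1-V(C)\subseteq B$ and $u_2\in f_2-V(C)$, and take a shortest outside semi-path $Q$ from $f_1$ to $f_2$ whose interior lies in $B\cup E'(B)$. If $f_1\cap f_2-V(C)\neq\emptyset$, then $Q=f_1,u,f_2$. In every case this gives an outside Berge path from some $x\in X$ to some $y\in Y$, namely $x,f_1,\dots,f_2,y$. All of its interior vertices lie outside $V(C)$, it contains at least one such vertex, and its edges are not in $E(C)$.

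Next we derive the spacing condition. Suppose $v_i\in X$ and $v_j\in Y$ with $v_i\neq v_j$. If $|i-j|\le 1$, say $j=i+1$, replace $e_i$ by the outside path $v_i,f_1,\dots,f_2,v_{i+1}$. This gives a Berge cycle with at least $\ell+1$ vertices, which is a contradiction. Hence $|i-j|\ge 2$ for all distinct $v_i\in X$ and $v_j\in Y$, so Lemma~\ref{lem:intC} applies with $s=1$. Let $q=\lfloor \ell/2\rfloor+1$, so that $|X|,|Y|\ge q$.

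\emph{Case $|X\cap Y|\ge q$.} Here $q\ge 2$, and part 1 gives $\ell\ge 2q>\ell$, which is a contradiction.

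\emph{Case $|X\cap Y|\le q-1$.} Part 2 gives $\ell\ge 2q+2>\ell$, which is again a contradiction.

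A direct argument also works here: two sets of size more than $\ell/2$ on a cycle of length $\ell$ with no consecutive cross pairs cannot coexist. This is because $X\cup N(X)$ already covers more than $\ell/2$ positions beyond $X$ unless $X=Y$ is an alternate set.

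For the second statement, assume $r=\ell/2+1$, so $\ell$ is even, and let $|X|=|Y|=\ell/2$. Part 2 of Lemma~\ref{lem:intC} with $q=\ell/2$ and $s=1$ forces $|X\cap Y|\ge q$. Part 1 then gives $X=Y$, and this set is an alternating set, say the even-indexed vertices of $C$. Each edge has size $r=\ell/2+1$, so $|f_i-V(C)|=1$; write $f_1-V(C)=\{u_1\}$ and $f_2-V(C)=\{u_2\}$. Since $H$ is simple and $X=Y$, we get $u_1\neq u_2$.

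In this case we build the longer cycle through the outside path between $u_1$ and $u_2$ in $B$ rather than through the edges alone. Take $v_i\in X$ and $v_{i+2}\in X$. Use $f_1$ from $v_i$ to $u_1$, follow the outside path from $u_1$ to $u_2$, and use $f_2$ from $u_2$ to $v_{i+2}$. This replaces the segment $v_i,e_i,v_{i+1},e_{i+1},v_{i+2}$ of $C$, losing $v_{i+1}$ but gaining at least $u_1$ and $u_2$. The result is a Berge cycle on at least $\ell+1$ vertices, which contradicts the maximality of $\ell$.

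The main obstacle is this equality case. The argument must handle the possibility that the outside path between $u_1$ and $u_2$ reuses $f_1$ or $f_2$. Since $f_1-V(C)=\{u_1\}$ and $f_2-V(C)=\{u_2\}$, a shortest outside path avoids both, so this should be resolvable. The first case also needs care when $|X\cap Y|$ is near $q$, but there the lemma applies directly.
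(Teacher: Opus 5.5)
Your proof is correct and follows essentially the same route as the paper. An outside semi-path between the two edges forces the spacing condition $|i-j|\ge 2$, and Lemma~\ref{lem:intC} with $s=1$ then leaves only the case where $X=Y$ is an alternating set of size $\ell/2$. When $r=\ell/2+1$, the distinct outside vertices $u_1\neq u_2$ are joined by an outside Berge path, which replaces $v_i,e_i,v_{i+1},e_{i+1},v_{i+2}$ and gives a longer cycle than $C$. That path automatically avoids $f_1,f_2$, since each meets $B$ in a single vertex, so any outside path works and not just a shortest one.
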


\begin{proof}
Suppose there are two edges $f,f' \in E'(B)$  with $|f \cap V(C)| \geq \frac{\ell}{2}$ and $|f' \cap V(C)| \geq \frac{\ell}{2}$. By definition, there exists a shortest outside semi-path $P'$ from $f$ to $f'$ with at least one vertex. If there exists a pair of distinct vertices $v_i \in f \cap V(C), v_j \in f' \cap V(C)$ such that $|i-j| = 1$, then replacing the edge in $C$ between $v_i$ and $v_j$ with $P'$ yields a longer Berge cycle, a contradiction. We apply Lemma~\ref{lem:intC} with $X = f \cap V(C), Y = f' \cap V(C), s = 1$, $q \geq \ell/2$. We get a contradiction except when $X = Y$ and $|X| = \ell/2$. In this case, without loss of generality $X = \{v_2, v_4, v_6, \ldots, v_{2 \lceil \ell/2 \rceil}\}$.

For the ``moreover" part of the Claim, we assume $r = \ell/2 + 1$. Since $H$ is simple, there exists distinct $w, w' \notin V(C)$ with $f - V(C) = \{w\}$ and $f'-V(C) = \{w'\}$. Let $P$ be an outside Berge path from $w$ to $w'$ (note that $f, f' \notin E(P)$). Replacing the segment $v_2, e_2, v_3, e_3, v_4$ with $v_2, f, w, P, w', f', v_4$ yields a longer Berge path than $C$, a contradiction.
\end{proof}

\begin{claim} \label{cla: outside cycle 1}
If $B$ is an outside connected component, there is no outside Berge cycle in $B$ with at least $k-\ell$ vertices.    
\end{claim}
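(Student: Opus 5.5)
The plan is to argue by contradiction: from a long outside Berge cycle together with $C$ we build a Berge path on at least $k$ vertices, in the same way as the opening of the proof of Claim~\ref{cla: k-3}. Suppose $D = w_1, d_1, w_2, \ldots, w_m, d_m, w_1$ is an outside Berge cycle in $B$ with $m \geq k-\ell$.

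\textbf{Step 1: $C$ and $D$ are disjoint.} By definition, $V(D) \subseteq B \subseteq V(H)-V(C)$. Also $E(D) \subseteq E'(B) \subseteq E(H)-E(C)$. So $C$ and $D$ share no vertices and no hyperedges, although hyperedges of one cycle may contain vertices of the other.

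\textbf{Step 2: a shortest connecting semi-path.} In the incidence bipartite graph $I(H)$, let $S_C = V(C)\cup E(C)$ and $S_D = V(D)\cup E(D)$. These sets are disjoint, and $I(H)$ is connected because $H$ is connected. Let $P$ be a shortest path in $I(H)$ from $S_C$ to $S_D$. By minimality, the first element of $P$ is its only element in $S_C$, the last element is its only element in $S_D$, and all internal elements lie in neither set. Since $I$ is bipartite, $P$ has length at least $1$, and its internal vertices and hyperedges are distinct from everything used in $C$ and $D$.

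\textbf{Step 3: splice into a Berge path.} We traverse $C$ fully and then enter $P$, choosing the direction according to the type of the first element of $P$.
\begin{itemize}
\item If the first element is a vertex $v_i$, the path starts $v_{i+1}, e_{i+1}, \ldots, e_{i-1}, v_i$ and then follows $P$.
\item If the first element is a hyperedge $e_i$, the path starts $v_{i+1}, e_{i+1}, \ldots, e_{i-1}, v_i, e_i$, using $v_i \in e_i$, and then follows $P$.
\end{itemize}
At the other end we leave $P$ and traverse $D$ fully.
\begin{itemize}
\item If $P$ ends at a vertex $w_j$, we continue $w_j, d_j, w_{j+1}, \ldots, d_{j-2}, w_{j-1}$.
\item If $P$ ends at a hyperedge $d_j$, we enter it from the preceding vertex of $P$ and continue $d_j, w_{j+1}, d_{j+1}, \ldots, d_{j-1}, w_j$.
\end{itemize}
This also covers the short cases, for instance $P = v_i, d_j$ when $d_j$ contains a vertex of $C$.

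\textbf{Step 4: count and conclude.} The resulting sequence alternates between distinct vertices and distinct hyperedges by Steps 1 and 2, so it is a Berge path. It contains all $\ell$ vertices of $C$ and all $m$ vertices of $D$, hence at least $\ell + m \geq k$ vertices, contradicting that $H$ has no Berge path on $k$ vertices.

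The only delicate point is the bookkeeping at the endpoints of $P$ when an endpoint is a hyperedge: the cycle edge used to enter or leave must not be reused. This is handled by omitting $e_i$ from the traversal of $C$ (respectively $d_j$ from the traversal of $D$) and using it as the connecting hyperedge instead. No further obstacle is expected.
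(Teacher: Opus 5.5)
Your proof is correct and follows the same route as the paper: connect $C$ to the outside cycle by a semi-path and traverse both cycles in full, giving a Berge path on at least $\ell + m \geq k$ vertices. Taking a shortest path in $I(H)$ instead of an outside semi-path, and handling hyperedge endpoints explicitly, simply makes the paper's one-line splicing argument more careful.
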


\begin{proof}
Suppose there is a outside Berge cycle $C'=u_1, f_1, u_2, \ldots, u_{\ell'}, f_{\ell'}, u_1$ in $B$ with $\ell' \geq k-\ell$ vertices. By the connectivity of $H$, there exists an outside semi-path $P$ from $V(C') \cup E(C')$ to $V(C) \cup E(C)$. If the first element of $P$ is an edge, denote it by $f_i$. If the first element of $P$ is a vertex, denote it by $u_i$.  Similarly, if the last element of $P$ is an edge, say it is $e_j$, and if the last element of $P$ is a vertex, say it is $v_j$. We observe that \[v_{j+1}, e_{j+1}, \ldots, e_{j-1}, v_j, P, u_i, f_{i-1}, \ldots, f_{i+1}, u_{i+1}\] is a Berge path with at least $\ell'+\ell \geq k$ vertices, a contradiction.
\end{proof}

There exists a partition of $V(H) - V(C)$ into outside connected components $B_1, B_2, \ldots, B_t$ such that the edge sets $E'(B_1), E'(B_2), \ldots, E'(B_t)$ form a partition of the edges in $E(H) - E(C) - \{e: e \subseteq V(C)\}$.

If every $B_i$ satisfies $|E'(B_i)| \leq |B_i|$, then from Claim~\ref{cla: edges inside C} we obtain 

\[|E(H)| \leq \max\{\ell,{\ell \choose r}\} + \sum_{i=1}^t |E'(B_i)| \leq \max\{\ell,{\ell \choose r}\} + (n-\ell)\] 

This is less than ${k-3 \choose r} + n-(k-3)$ when $k \geq r+4$ by Claim \ref{cla: k-3}. We handle the case $k=r+3$ separately in the Appendix (see Claim~\ref{cla:kr+3}) as to not distract from the main proof. Hence from now on we consider only $k \geq r+4$.

Thus we may assume that there exists a $B_i$ satisfying $|E'(B_i)| \geq |B_i| +1$ for some $i \in[t]$. Without loss of generality, assume that it is $B_1$.
If $|B_1| = 1$, then each edge in $E'(B_1)$ contains exactly $r-1 \geq (k-3)/2 \geq \ell/2$ vertices in $V(C)$. 
Then by Claim~\ref{cla:bad edge}, we have $|E'(B_1)| \leq 1 = |B_1|$. So we assume $B_1$ has at least 2 vertices and 3 edges.

For any $B \subseteq B_1$, let $H[B]$ be the non-uniform multi-hypergraph with $V(H[B])=B$ and $E(H[B])=\{e \cap B: e\in E'(B)\}$. Thus, by this definition, we have $|E(H[B])|=|E'(B)|$. As before, we say $H[B]$ is $2$-connected if its incidence bipartite graph is $2$-connected.

\begin{claim}\label{cla: 2-conn block}
There exists a subset $B \subseteq B_1$ such that $H[B]$ is $2$-connected and $|E'(B)|\geq |B|$.  
\end{claim}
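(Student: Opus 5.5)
We prove the claim by a pruning argument on the block structure of the incidence bipartite graph $I_1$ of $H[B_1]$. Its vertex set is $B_1 \cup E(H[B_1])$, where each edge of $H[B_1]$ is the set $e\cap B_1$ for some $e \in E'(B_1)$, taken with multiplicity. Since $B_1$ is an outside connected component, $I_1$ is connected. We know $|E(H[B_1])| = |E'(B_1)| \geq |B_1|+1$. The idea is that a connected graph whose ``excess'' is positive must have a $2$-connected block that carries enough of the edges.

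First, to each vertex $x \in B_1$ we give weight $1$, and to each edge-node weight $-1$. We use the block--cut tree of $I_1$. Each edge-node and each vertex-node of $I_1$ is assigned to blocks as follows. Root the block--cut tree at some block. Each element of $B_1 \cup E(H[B_1])$ is assigned to the unique block that is closest to the root among the blocks containing it. This partitions the elements among the blocks, except that each non-root block loses its ``parent'' cut element.

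For a block $D$, let $B_D$ be its vertices in $B_1$ and $F_D$ its edge-nodes. Define the deficiency $\phi(D) = |F_D| - |B_D|$. Summing over blocks, each non-root block omits exactly one element (its parent cut element) from its count. Hence
\[\sum_D \phi(D) \;=\; |E'(B_1)| - |B_1| + \sum_{D \text{ non-root}} \pm 1.\]
Here the sign is $+1$ when the parent cut element is a vertex and $-1$ when it is an edge.

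This is awkward, so we use a cleaner tactic: repeatedly delete leaf blocks. Let $D$ be a leaf block attached at cut element $c$. If $D$ is a single $I_1$-edge (a bridge), it consists of one vertex $x\in B_1$ and one edge-node $f$. Deleting the non-cut endpoint changes $|E|-|B|$ by $\pm1$.

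In general, delete from $H[B_1]$ all elements of $D$ other than $c$. If $c$ is a vertex, this deletes the edges $F_D$ and the vertices $B_D - \{c\}$. If $c$ is an edge, it deletes $F_D - \{c\}$ and $B_D$; shrinking the edges keeps the remainder a valid $H[B']$ with $B'=B_1 - (\text{deleted vertices})$. Restricting to $E'(B')$ may also drop edges, but only edges meeting deleted vertices, and these are exactly the ones removed.

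Suppose that no block of $I_1$ with at least $3$ nodes satisfies $|F_D|\geq|B_D|$, and that no bridge does either. In that case every leaf removal of a $2$-connected block with $c$ a vertex removes $|F_D|$ edges and $|B_D|-1 \ge |F_D|$ vertices, so $|E|-|B|$ does not increase. With $c$ an edge, it removes $|F_D|-1$ edges and $|B_D| \geq |F_D|+1$ vertices, so again there is no increase. For a bridge leaf, removing a leaf vertex deletes its edge as well, giving no increase, while removing a leaf edge-node increases the excess by $1$.

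This last case is the main obstacle. The fix is to handle dangling edge-nodes by noting that such an edge $f$ has $f\cap B_1$ equal to a single vertex, namely its cut vertex. We then redefine the count so that bridges are charged to the neighboring vertex. Rather than deleting them, we keep pendant edge-nodes and observe that a vertex with at least two pendant edges, or with a pendant edge inside a block, does not by itself form a $2$-connected $H[B]$ with $|B| = 1$. At this point we should reconsider: $H[\{x\}]$ with two edges $\{x\},\{x\}$ has incidence graph $K_{1,2}$, which is not $2$-connected. So instead we take $B=\{x\}$ only when that is allowed.

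Alternatively, and more simply, we show directly that pendant edges are few. An edge $e\in E'(B_1)$ with $|e\cap B_1|=1$ has $r-1\ge \ell/2$ vertices in $V(C)$ (using $\ell\le k-3\le 2r-2$ by Claim~\ref{cla: k-3}). So by Claim~\ref{cla:bad edge} there is at most one pendant edge in all of $B_1$ with $|e\cap V(C)|>\ell/2$. Equality $|e \cap V(C)| = \ell/2$ forces $r=\ell/2+1$, which is again covered by the ``moreover'' part of Claim~\ref{cla:bad edge}. Hence at most one edge-node of $I_1$ is a leaf of size-one type.

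Now iterate the pruning. Each step does not increase $|E|-|B|$, except possibly once (the unique pendant edge), where it increases by $1$. Starting from $|E|-|B|\ge 1$, we end with a single block, either $2$-connected or a single vertex, with $|E|-|B|\ge 0$. A single vertex would have no edges, contradicting $|E|\geq|B|$. So the final block gives a set $B\subseteq B_1$ with $H[B]$ $2$-connected and $|E'(B)|\ge |B|$. Alternatively, the process stops earlier at some block with $|F_D|\ge|B_D|$, and that block gives the required $B$ directly.
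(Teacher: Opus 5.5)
Your overall plan is the paper's: prune leaf blocks of the block--cut tree of $I(H[B_1])$ and track $|E|-|B|$. However, your treatment of a leaf block $D$ attached at a cut \emph{hyperedge} $c$ does not work.

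You count $c$ in $F_D$ when testing $|F_D|\ge|B_D|$. But $c$ has a neighbour outside $D$, so $c\notin E'(B_D)$. Suppose $D$ is the $4$-cycle $x_1,f,x_2,c$ of $I$. You stop because $|F_D|=|B_D|=2$, yet $E'(\{x_1,x_2\})=\{f\}$. So $|E'(B)|<|B|$ and $I(H[B])$ is a path, not $2$-connected; the step ``that block gives the required $B$ directly'' is false.

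When you do prune such a block, you keep and shrink $c$. The remainder is then not $H[B']$: your remark that the dropped edges are ``exactly the ones removed'' fails for $c$ itself. Two things go wrong as a result. Shrunk hyperedges can later be counted in a block even though they are not in $E'(B)$. And shrinking can create new pendant edge-nodes (in the example, $c$ may shrink to a single vertex), so Claim~\ref{cla:bad edge} no longer limits the excess-losing steps to one. The paper avoids this in two ways. A block's edges are only those $e$ with $e\cap B_1\subseteq V(H_D)$, which excludes $c$. And when pruning, $c$ is deleted together with $V(H_D)$. Then every non-returning step deletes at most as many edges as vertices, and the remainder is always of the form $H[B']$.

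Your inequalities also carry the wrong sign throughout. The counts you give show that $|E|-|B|$ never \emph{decreases} under pruning, and that it drops by one when a pendant edge-node is removed. As written, the passage from $|E|-|B|\ge 1$ to a final value $\ge 0$ is a non sequitur, though it is correct once the signs are fixed.

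Your use of Claim~\ref{cla:bad edge} (with $\ell\le 2r-2$) to show that at most one $e\in E'(B_1)$ has $|e\cap B_1|=1$ is a real addition. The paper's procedure would ``return'' the pendant bridge $\{x,f\}$, since $|E(H_D)|\ge|V(H_D)|$, even though $I(H[\{x\}])=K_2$. But if you prune $f$ and $x$ survives into the final $B$, then $f\in E'(B)$ by definition and $H[B]$ again has a pendant edge-node. That case needs an extra argument.

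Finally, even with the paper's convention, consider a returned block hanging from a cut hyperedge $c$. The incidence graph of $H[V(H_D)]$ is then $D-c$ (plus pendant edges), which need not be $2$-connected. For example, take two pairs $\{a,b\}$ and $\{a',b'\}$, each carrying several parallel edges, joined by one edge through $b,b'$ and by $c\ni a,a'$. One can repair this by recursing on $H[V(H_D)]$. This works because the pruning already refutes $|E|\ge|V|$: if no block is ever returned, the last step deletes a whole component with fewer edges than vertices, forcing $|E|\le |V|-1$.
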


\begin{proof}
If $H[B_1]$ is $2$-connected, since $|E'(B_1)|\geq |B_1|+1$, take $B=B_1$. Now assume $H[B_1]$ is not $2$-connected. Let $I = I(H[B_1])$ denote the incidence bipartite graph of $H[B_1]$. Since the incidence bipartite graph $I$ is not $2$-connected, $I$ has a block-cut tree decomposition: a decomposition into vertex sets $D_1, D_2 \ldots, D_p$ such that each induced subgraph $I[D_i]$ is either an edge or a maximal $2$-connected subgraph, and $|D_i \cap D_j| \leq 1$ whenever $i \neq j$. It is well known that if $I$ is not $2$-connected, then there exists a leaf-block: a $D_i$ that is incident to at most $1$ cut element.

For each block $D_i$, let $H_{D_i}$ denote the subhypergraph with $V(H_{D_i}) = D_i \cap V(H)$ and $E(H_{D_i}) = \{e \cap B_1: e \cap B_1 \subseteq V(H_{D_i})\}$.
Note that any cut vertex incident to $H_{D_i}$ is contained in $V(H_{D_i})$ while any cut hyperedge incident to $H_{D_i}$ connects to vertices in different blocks, and hence does not belong to $E(H_{D_i})$.

We run a pruning process as follows.
We pick an arbitrary leaf block, say $D_i$. If $|E(H_{D_i})| \geq |V(H_{D_i})|$, then $H_{D_i}$ is our desired block. We stop and return $D_i$.

Otherwise, we have $|E(H_{D_i})| \leq |V(H_{D_i})|-1$. Since $H_{D_i}$ is a leaf block, it is adjacent to at most one cut element. Let $H^0=H[B_1]$. Let $H^1$ be the returning hypergraph obtained from $H^0$ by pruning $D_i$ so that $V(H^1)=B_1 - V_0$ and $E(H^1)=E(H^0) - E_0$, where:

\begin{itemize}
    \item If $H_{D_i}$ is a connected component of $H[B_1]$, then $V_0 = V(H_{D_i}), E_0 = E(H_{D_i})$.
    \item If $H_{D_i}$ is incident to a cut vertex $x$, then $V_0=V(H_{D_i}) - \{x\}$ and $E_0=E(H_{D_i})$.

    \item If $H_{D_i}$ is incident to a cut hyperedge $e$, then $V_0=V(H_{D_i})$ and $E_0=E(H_{D_i})\cup \{e\}$.
\end{itemize}

Since $|E(H_{D_i})| \leq |V(H_{D_i})|-1$, we observe that in each of the three cases above, the number of edges we remove is at most the number of vertices removed.

We repeat the pruning process on $H^1$. Note that pruning a leaf block may have created more blocks (e.g., by the deletion of cut hyperedges), and some blocks may have become leaf blocks in the new hypergraph. Let $H^{j+1}$ be the returning hypergraph after the pruning process applied to the block-cut tree decomposition of $H^j$. We continue this process until we either return some block $D^*$ of the current hypergraph with $|E(H_{D^*})| \geq |V(H_{D^*})|$, or no leaf block satisfies this and we prune all of $H[B_1]$ until no vertices or edges remain.

If all of $H[B_1]$ is pruned, then the at each step, the number of edges removed at each step is at most the number of vertices removed. Hence $|E(H[B_1])| \leq |B_1|$, a contradiction.



So suppose $D^*$ is the desired block returned by the pruning process. Let $B=V(H_{D^*})$. We have $H[B]$ is $2$-connected and $|E'(B)| \geq |B|$.
\end{proof}

Let $B \subseteq B_1$ be the subset such that $H[B]$ is $2$-connected and $|E'(B)| \geq |B|$ whose existence is guaranteed by Claim \ref{cla: 2-conn block}. Let $e^* \in E'(B)$ such that $|e^* \cap V(C)|$ is maximized. Let $H^*[B]$ be the (possibly) non-uniform multi-hypergraph with $V(H^*[B])=B$ and $E(H^*[B])=\{e \cap B: e\in E'(B)-\{e^*\}\}$. Let $F_q(B)=\{f \in E'(B): |f \cap V(C)|\geq q\}$ denote the edges in $E'(B)$ that contain at most $r-q$ vertices in $B$.

\begin{claim} \label{cla:F1}
 $|e^* \cap V(C)|\geq 1$. 
\end{claim}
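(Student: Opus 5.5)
The plan is to argue by contradiction. Suppose $|e^*\cap V(C)|=0$. Since $e^*$ maximizes $|e\cap V(C)|$ over $E'(B)$, every edge of $E'(B)$ is disjoint from $V(C)$. Hence every $e\in E'(B)$ satisfies $e\subseteq B$ and $e\cap B=e$, so $H[B]$ is an $r$-uniform multi-hypergraph on $|B|$ vertices with $|E(H[B])|=|E'(B)|\ge |B|$ edges. It is in fact simple, since $H$ is simple. In particular $|B|\ge r$, so Corollary~\ref{cor:multi cycle} applies. The $(r,r-1)$-block tree alternative has exactly $|B|-1$ edges, which is excluded. Therefore $H[B]$, and hence $B$, contains an outside Berge cycle $C'$ of length $\ell'\ge r$. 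By the maximality of $C$ we also have $\ell'\le \ell$.

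\textbf{Generic case.} I will compare with Claim~\ref{cla: outside cycle 1}, which forbids $\ell'\ge k-\ell$.
\begin{itemize}
\item If $\ell\ge r+1$, then $\ell+\ell'\ge 2r+1\ge k$, a contradiction.
\item If $\ell=r$, then Claim~\ref{cla: k-3} and $k\le 2r+1$ give $r+3\le k\le 2r+1$. If $k\le 2r$, then $\ell+\ell'\ge 2r\ge k$, again a contradiction.
\end{itemize}

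\textbf{Remaining case: $\ell=r$ and $k=2r+1$.} Here I must produce one extra vertex beyond the $\ell+\ell'$ given by the construction in Claim~\ref{cla: outside cycle 1}. Recall that construction: a semi-path $P$ from $C'$ to $C$, then all of $C$, then all of $C'$ entered at some $u_i$ and traversed so that one edge $f_i$ of $C'$ incident to the endpoint remains unused. Since $f_i\subseteq B$ and $|f_i|=r\ge 3$, the path can be extended through $f_i$ to any vertex of $f_i$ not yet on the path. So the only obstruction is that every edge of $C'$ lies inside $V(C')$. Moreover, because $\ell'\le \ell=r$ and each edge has $r$ vertices, this forces $\ell'=r$ and $V(C')=f_j$ for every $j$. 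But $H$ is simple, so the $r$ edges of $C'$ cannot all equal the same $r$-set. This contradiction gives the required extra vertex, and hence a Berge path on $\ell+\ell'+1\ge 2r+1=k$ vertices.

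The delicate point, and the main obstacle, is making the concatenation honest. The extension must use a vertex of $f_i$ not already on $P$, and $P$'s edges must be distinct from the cycle edges. I plan to handle this by taking $P$ to be a shortest outside semi-path from $V(C')\cup E(C')$ to $V(C)\cup E(C)$, exactly as in the proof of Claim~\ref{cla: outside cycle 1}. Then $P$ meets $C'$ only at its first element, so the vertices of $f_i\setminus V(C')$ avoid $V(P)$ except possibly its first vertex, which is on $C'$ anyway. I will also check the small subcase where the first element of $P$ is itself an edge $f_j$ of $C'$, choosing the traversal direction of $C'$ so that the leftover edge is not $f_j$. If this bookkeeping fails, my fallback is to use the $2$-connectivity of $H[B]$ to choose $C'$ with an edge meeting $B\setminus V(C')$. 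This is possible because $|B|>r$, as $H[B]$ has at least $|B|$ edges and a simple $r$-uniform hypergraph on exactly $r$ vertices has only one edge.
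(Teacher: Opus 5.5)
Your proposal is correct and follows essentially the same route as the paper. Corollary~\ref{cor:multi cycle} yields an outside cycle $C'$ with $\ell'\ge r$, Claim~\ref{cla: outside cycle 1} forces $\ell=\ell'=r$ and $k=2r+1$, and simplicity of $H$ then supplies the missing $(2r+1)$-st vertex along a shortest semi-path $P$ from $C'$ to $C$.

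Tighten the last step as the paper does. If $P$ has an internal vertex, the path already has $2r+1$ vertices. Otherwise $P$ starts at some $u_j$, so the leftover edge is $f_{j-1}$ or $f_j$ depending on the direction you traverse $C'$, not an arbitrary edge of $C'$ as your ``every edge'' step suggests. These two distinct edges cannot both equal $V(C')$, so one of them supplies the extra vertex.
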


\begin{proof}
Suppose $|e^* \cap V(C)|=0$. Then $H[B]$ is $r$-uniform (in particular $E(H[B]) \subseteq E(H)$) and $|E(H[B])| = |E'(B)| \geq |B|$. By Corollary \ref{cor:multi cycle}, $H[B]$ contains a Berge cycle of length at least $r$ which can be lifted to an outside Berge cycle of the same length.
Let $C'=u_1, f_1, \ldots, u_{\ell'}, f_{\ell'}, u_1$ be the outside Berge cycle in $B$ where $\ell' \geq r$.

By Claim \ref{cla: outside cycle 1},  $\ell \leq k-\ell'-1\leq r$. Since $\ell \geq r$, we have $\ell=\ell'=r$. Since $H$ is connected, there exists a semi-path from $V(C') \cup E(C')$ to $V(C) \cup E(C)$. Let $P$ be the shortest such semi-path. If the first element of $P$ is a vertex, we may assume it is $u_j$. If the first element of $P$ is an edge, we may assume it is $f_j$. If the last element of $P$ is a vertex, we may assume it is $v_i$ and if it is an edge, we may assume it is $e_i$. If $P$ contains a vertex from $B- V(C')$, then 
\[u_{j+1}, f_{j+1}, \ldots, f_{j-1}, u_j, P, v_i, e_{i-1}, \ldots, e_{i+1}, v_{i+1}\]
is a Berge path with at least $r+r+1 \geq k$ vertices, a contradiction. Thus, $P$ contains no vertex from $B - V(C')$ and we may assume $P=u_j,e_i$. Since $H$ is simple and $\ell'=r=|V(C')|$, either $f_j$ or $f_{j-1}$ must contain a vertex $u \in B - V(C')$. Without loss of generality, assume $u \in f_j$. Then,
\[u, f_j, u_{j+1}, f_{j+1}, \ldots, f_{j-1}, u_j, e_i, v_i, e_{i-1}, \ldots, e_{i+1}, v_{i+1}\]
is a Berge path with $r+r+1\geq k$ vertices, a contradiction. 
\end{proof}

It follows that $H[B]$ is not $r$-uniform since $|e^* \cap B| \leq r-1$. 

Recall that for integers $a,b$, a hypergraph $G$ is called an {\em $(a,b)$-block tree} if its 2-shadow $\partial_2G$ is connected and every $2$-connected block of $\partial_2G$ has exactly $a$ vertices and induces exactly $b$ edges in $G$. In particular, if $G$ is $a$-uniform, then each block induces $b$ multi-edges.

\begin{claim}\label{cla: block tree}
For $0 \leq t  \leq r-2$, $H^*[B]$ is not a $(r-t )$-uniform, $(r-t ,r-t -1)$-block tree. 
\end{claim}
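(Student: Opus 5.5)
Suppose for contradiction that $H^*[B]$ is an $(r-t)$-uniform $(r-t,r-t-1)$-block tree for some $0 \le t \le r-2$. Then every edge of $E'(B)-\{e^*\}$ has exactly $r-t$ vertices in $B$ and hence exactly $t$ vertices in $V(C)$. Counting edges gives $|E'(B)|-1 = |E(H^*[B])| = |B|-1$, so $|E'(B)| = |B|$. The plan is to use the fact that $H[B]$ is $2$-connected while $H^*[B]$ has a tree-like block structure. Putting $e^*$ back must then create a long outside Berge cycle, or a way of attaching $B$ to $C$ that produces either a Berge cycle longer than $C$ or a Berge path on $k$ vertices.

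\textbf{Step 1 (where $e^*$ sits).} Since $H[B]$ is $2$-connected but $H^*[B]$ is a block tree, the set $e^*\cap B$ must meet vertices lying in at least two different blocks. Otherwise $e^*$ would hang off a single block or cut vertex, and $H[B]$ would still contain a cut element. If $H^*[B]$ has only one block, then $t \geq 1$ and the single block has $|B| = r-t$ vertices.

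\textbf{Step 2 (many blocks: outside cycle).} When $H^*[B]$ has at least two blocks, take two vertices $x,y\in e^*\cap B$ lying in distinct blocks. Walk along the block tree from $x$ to $y$. Inside each $(r-t,r-t-1)$-block we can route a Berge path through all $r-t$ vertices, using the $r-t-1$ parallel edges in the way described in the proof of Corollary~\ref{cor:multi cycle}. Closing the walk with $e^*$ gives an outside Berge cycle with at least $2(r-t)-1$ vertices, and at least $r-t+1$ when only one block is fully traversed. When $t$ is small this already exceeds $k-\ell$. We use $\ell\le k-3$ from Claim~\ref{cla: k-3} and $k\le 2r+1$, so $k-\ell$ is large only when $\ell$ is small, and then the contradiction comes from Claim~\ref{cla: outside cycle 1}.

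\textbf{Step 3 (large $t$: use $C$).} When $t$ is large, every edge of $E'(B)-\{e^*\}$ has $t$ vertices on $C$. By the maximality of $|e^*\cap V(C)|$, the edge $e^*$ has at least $t$ vertices on $C$ as well. Two such edges $f,f'$ are joined by an outside semi-path through $B$ (by $2$-connectedness), and that semi-path can be chosen to cover a whole block, so it contains at least $r-t$ vertices of $B$. Neighbouring attachment points on $C$ must then lie far apart: if $v_i\in f$ and $v_j\in f'$ with $|i-j|\le r-t$, then rerouting $C$ through $B$ gives a longer cycle. Apply Lemma~\ref{lem:intC} with $q=t$ and $s=r-t$. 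This forces $\ell\ge \min\{t(r-t+1),\,2t+2(r-t)\}$, which is roughly $2r > k-3$, contradicting Claim~\ref{cla: k-3}. Alternatively, route a Berge path from $C$ through all of $B$ and count $\ell+(r-t)+1\ge k$ vertices. The single-block case is handled the same way: there $|B|=r-t$, and the $r-t-1\ge 1$ edges together with $e^*$ give at least two distinct attachments to $C$.

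\textbf{Main obstacle.} The hard part will be the bookkeeping in Step 3. We must choose $f,f'$ with distinct intersections with $C$ so that case 2 of Lemma~\ref{lem:intC} applies, or else settle the equality case $X=Y$ by a separate rerouting, as in Claim~\ref{cla:bad edge}. We must also check the boundary values $t=r-2$ and $t=1$ against $\ell\le k-3\le 2r-2$. I would try first to always pick $f=e^*$ together with an edge in a leaf block of $H^*[B]$ far from $e^*$, which guarantees a long outside path between them.
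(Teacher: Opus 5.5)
There is a genuine gap in Step 3, which is where the real difficulty of the claim lies. You assert that the semi-path between two edges of $E'(B)$ ``can be chosen to cover a whole block, so it contains at least $r-t$ vertices of $B$.'' This is false in the single-block case, which Step 2 never touches. There $E'(B)=\{h_1,\dots,h_{r-t-1},e^*\}$ with every $h_i\supseteq W=B$. A semi-path inside $B\cup E'(B)$ from one edge of $E'(B)$ to another therefore has at most $r-t-2$ interior edges, hence at most $r-t-1$ vertices. With $s=r-t-1$, part 2 of Lemma~\ref{lem:intC} gives only $\ell\ge 2t+2(r-t-1)=2r-2$. That is consistent with $\ell\le k-3\le 2r-2$ when $k=2r+1$.

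Your bound in the case $X\subseteq Y$ is also off. The quantity $t(r-t+1)$ equals $r$ at $t=1$, where part 1 of the lemma says nothing because $X\cap Y$ is a single point. At $t=2$ it equals $2r-2$. The paper avoids $X=Y$ by choosing $h_1$ with $h_1\cap V(C)\neq e^*\cap V(C)$; this is possible because the $h_i$ meet $C$ in distinct $t$-sets, $H$ being simple. When $X\subsetneq Y$, it adds a vertex of $Y-X$ to get $t+1$ points pairwise at distance at least $r-t$, so $\ell\ge (t+1)(r-t)\ge 2r-2$. Either way, the lemma yields $\ell\ge 2r-2$, not a contradiction.

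So you still need an argument excluding $\ell=2r-2$ (with $k=2r+1$), and the proposal has none.

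\emph{Case $r-t\ge 3$.} Here a corrected version of your ``alternative'' count works, but only once you know $\ell=2r-2$, and without the extra $+1$. An outside semi-path from a vertex of $W$ through all of $W$ to $e^*$ has at least $3$ vertices. Appending a spanning path of $C$ gives $\ell+3=2r+1\ge k$ vertices.

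\emph{Case $t=r-2$.} Here that count fails. As far as $B$ alone is concerned, the configuration $B=\{u,u'\}$, $E'(B)=\{h_1,e^*\}$, with $h_1\cap V(C)$ and $e^*\cap V(C)$ distinct $(r-2)$-sets spaced at least $2$ apart, is consistent with $\ell=2r-2$. The paper has to use information outside $B$. Since $|E'(B_1)|\ge |B_1|+1>|E'(B)|$, some edge of $E'(B_1)$ has a vertex $w\in B_1-B$. This gives a semi-path $w,\dots,u,h_1,u',e^*$ with three vertices, and hence a Berge path on $k$ vertices.

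It then needs a separate argument for $t=r-2$, $\ell\le 2r-3$. That argument takes a longest outside Berge cycle built from the $(2,1)$-block tree and $e^*$, applies Lemma~\ref{lem:intC} with $s=\ell'-1$, and handles the configuration $r=\ell=\ell'=3$ ad hoc. Your proposal never invokes the surplus $|E'(B_1)|\ge |B_1|+1$. Without it, or some substitute, the case $t=r-2$ is not closed.
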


\begin{proof}
    Suppose $H^*[B]$ is a $(r-t )$-uniform, $(r-t ,r-t -1)$-block tree. 
If $t =0$, then since $H$ is $r$-uniform with no multi-edges, $H^*[B]$ also has no multi-edges. Thus $H^*[B]$ is not an $(r,r-1)$-block tree.
    
    Now let $t  \geq 1$. There is a $(r-t )$-subset $W \subseteq B$ with multiplicity $r-t -1$ in $E(H^*[B])$. As $H$ is an $r$-uniform hypergraph, these edges correspond to $r-t -1$ edges in $H$ that each contain $W$ and $t $ vertices in $V(C)$. Let these edges of $H$ be $h_1, h_2, \ldots, h_{r-t -1}$. Moreover, since $H$ contains no multi-edges, each $h_i\cap V(C)$ is a distinct $t $-set. By the choice of $e^*$ to maximize $|e^* \cap V(C)|$, we also have $|e^* \cap V(C)| \geq t  \geq 1$.

    First observe that
    \begin{equation}\label{eq:2r-2}
        \text{\parbox{0.8\linewidth}{$\ell \leq 2r-2$, and if $\ell=2r-2$, then any outside semi-paths from $B_1$ to $F_1(B_1)$ contain at most 2 vertices.} }
    \end{equation}
    Indeed, we can find an outside semi-path from $W$ to $e^*$ with at least $2$ vertices in $W$. Combining this semi-path with a spanning Berge path of $C$ yields a Berge path with at least $2 + \ell \leq 2r$ vertices. Similarly, if an outside semi-path from $B_1$ to $F_1(B_1)$ contains $3$ or more vertices and $\ell= 2r-2$, we obtain a Berge path on at least $2r-2 + 3 \geq k$ vertices. This proves~\eqref{eq:2r-2}.

    {\bf Case 1}: $r-t  \geq 3$. Let $P$ be a shortest outside semi-path in $H$ from $W$ to $e^*$. Without loss of generality, $h_1 \cap V(C) \neq e^* \cap V(C)$ and we can extend $P$ to an outside semi-path $P'$ from $h_1$ to $e^*$ containing edges $\{h_1, h_2, \ldots, h_{r-t -1}, e^*\}$ and $r-t -1$ vertices in $W$.   

    If there exists distinct $v_i \in h_1\cap V(C)$ and $v_j \in e^*\cap V(C)$ with $|i-j| \leq r-t -1$, then replacing a shorter segment in $C$ from $v_i$ to $v_j$ with the Berge path $v_i, h_1, P', e^*, v_j$ yields a Berge cycle of length $\ell - (|i-j|-1) + (r-t -1) > \ell$, contradicting the choice of $C$.

    Thus by Lemma~\ref{lem:intC} with $X = h_1 \cap V(C), Y = e^* \cap V(C)$, $q = t $, $s = r-t -1$, if $X \not\subseteq Y$
   \[\ell \geq 2t  + 2(r-t -1) \geq 2r-2.\]

   If $X \subsetneq Y$, then for any $ v_i \in Y - X$, the elements in $X \cup \{v_i\}$ are at least $r-t $ apart in $C$, and so \[\ell \geq (t +1)(r-t ).\] This function is minimized when $t  \in \{1,r-2\}$, which gives $\ell \geq 2r-2$. 

    By combining $P$ with a Berge path in the $(r-t , r-t -1)$-block tree, we obtain an outside semi-path from $W$ to $e^*$ which covers all vertices of $W$. This semi-path contradicts~\eqref{eq:2r-2}.

   {\bf Case 2}: $r-t  = 2$.  So $t  =r-2$. We have $|e^*\cap V(C)| \geq t =r-2$. Recall $h_1$ is the edge that contains $W$ in $B$ and $r-2$ vertices in $C$. Let $h_1 \cap B=\{u,u'\} = W$.

   {\em Case 2.1}: $\ell = 2r-2$. 
   If the $(2,1)$-block tree $H^*[B]$ contains at least $2$ blocks, then since $H[B]$ is $2$-connected, when we add the edge $e^*$ back, we can find an outside Berge cycle in $B$ with at least 3 vertices. This contradicts Claim~\ref{cla: outside cycle 1}. So we assume that $H^*[B]$ has $2$ vertices $\{u,u'\}$ and a single edge. Then $H[B]$ consists of a pair of multi-edges with vertices $\{u, u'\}$, which implies $B =\{u, u'\}$ and $E'(B) = \{e^*, h_1\}$. Here we use the fact that $H[B]$ is $2$-connected.

   Recall that $|E'(B_1)| \geq |B_1|+1 > |E'(B)|$. There exists another edge $e \in E'(B_1) - \{e^*, h_1\}$. Since $e \notin E'(B)$, $e$ contains a vertex in $B_1 - B = B_1 - \{u,u'\}$. We can find an outside semi-path from one of these vertices, say $w$, to $e^* \in F_1(B_1)$ which contains at least $3$ vertices, $w, u, u'$, in $B_1$. This contradicts~\eqref{eq:2r-2}.

{\em Case 2.2}: $\ell \leq 2r-3$. 
   By the $2$-connectivity of $H[B]$, there exists an outside Berge cycle of $H$ in $B$ using only the edges corresponding to the $(2,1)$-block tree and $e^*$. Note that all of these edges belong to $F_{r-2}(B)$. Let $C'= u_1, f_1, \ldots, u_{\ell'}f_{\ell'}, u_1$ be a longest such Berge cycle. If $|V(C')|=\ell' \geq 3$, then for every $i$, we can take the longer segment of $C'$ between $f_i$ and $f_{i+1}$ which has $\ell'-1$ vertices. Call this semi-path $P_i$.
   
Setting $X = f_i \cap V(C), Y=f_{i+1} \cap V(C)$, we have $|i' - j'| \geq |V(P_i)|+1 = \ell'$ for any distinct $v_{i'} \in X, v_{j'} \in Y$. 

By Lemma~\ref{lem:intC}, $\ell \geq \min\{(r-2)\ell', 2(r-2) + 2(2)\} \geq 2r-3$ with equality only if $r = 3$, $\ell'= 3$, $\ell = (r-2)3 = 3$, and $X = Y$. Moreover, this holds for all $P_i$, so we may assume all $f_i \cap V(C)$ are the same. Say $f_i \cap V(C) = \{v_1\}$.

Since $r = |V(C)|$, one of $e_1$ or $e_\ell$, say $e_1$, contains a vertex $w$ outside of $V(C)$. If this vertex belongs to $V(C')$, say $w = u_i$, then replacing the segment $v_1, e_1, v_2$ in $C$ with $v_1, f_{i}, u_{i+1}, \ldots, f_{i-1}, u_i, e_1, v_2$ yields a longer Berge cycle than $C$. Otherwise, if $w \notin V(C) \cup V(C')$, we take the Berge path $u_1, f_1, \ldots, f_{\ell'}, v_1,e_\ell, v_{\ell}, \ldots, v_2, e_1, w$ contains $\ell + \ell' + 1 = 2r+ 1 = k$ vertices, a contradiction.

It follows that the longest Berge cycle in $H[B]$ has length $2$, and so the $(2,1)$-block tree $H^*[B]$ contains a single block. Since  $H[B]$ is $2$-connected, as in the previous case, $B = \{u,u'\}$, $E(H[B])$ contains only the edge $\{u,u'\}$ with multiplicity $2$, and $E'(B) = \{h_1, e^*\}$. As $H$ contains no multi-edges, $h_1 \cap V(C) \neq e^* \cap V(C)$. By Lemma~\ref{lem:intC} with $s = 1$, $q = r-2$, we obtain $\ell \geq 2(r-2) + 2(1) > 2r-3$.
\end{proof}

\begin{claim} \label{cla:2-conn}
Let $2\leq q \leq r-2$. Suppose $|F_q(B)| \geq 2$. Then every outside Berge cycle $C'$ in $B$ of length at least $r-q$ contains all edges in $F_q(B)$.
\end{claim}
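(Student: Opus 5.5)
By contradiction: let $C'=u_1,f_1,\ldots,u_{\ell'},f_{\ell'},u_1$ be an outside Berge cycle in $B$ with $\ell'\geq r-q$, and suppose some $f\in F_q(B)$ is not in $E(C')$. Since $|F_q(B)|\geq 2$, pick a second edge $f'\in F_q(B)$, $f'\neq f$. If possible, choose $f'\notin E(C')$ too; otherwise take $f'=f_j\in E(C')$.

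The aim is to combine $C'$ with $C$ through the two large traces $X=f\cap V(C)$ and $Y=f'\cap V(C)$, each of size at least $q$. We have either a long detour through $B$ between vertices of $X$ and $Y$, or a long Berge path, and both contradict the choices made.

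\textbf{Step 1: a long outside path from $f$ to $f'$.} Since $H[B]$ is $2$-connected, Menger's theorem in the incidence graph $I(H[B])$ gives two disjoint paths from $\{f,f'\}$ to $C'$. Here $f$ is viewed by its trace $f\cap B$, which is nonempty because $f\in E'(B)$. These two paths attach to $C'$ at two distinct elements. Following the longer of the two arcs of $C'$ between the attachment points gives an outside semi-path $P$ from $f$ to $f'$ whose internal vertices number at least $\lceil \ell'/2\rceil$. If $f'=f_j$ lies on $C'$, only one Menger path from $f$ to $C'-f_j$ is needed, and then we go around $C'$ the long way to $f_j$; this gives about $\ell'-1$ internal vertices. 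In either case $P$ avoids $E(C)$, so replacing a segment of $C$ by $P$ is legitimate.

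\textbf{Step 2: spacing on $C$.} For distinct $v_i\in X$ and $v_j\in Y$, if $|i-j|\leq |V(P)|$ (cyclic distance), then replacing the shorter arc of $C$ from $v_i$ to $v_j$ by $v_i,f,P,f',v_j$ gives a Berge cycle longer than $C$, contradicting maximality. So Lemma~\ref{lem:intC} applies with $s=|V(P)|$. In the first case of the lemma, $\ell\geq q(s+1)$. In the second case, $\ell\geq 2q+2s$. With $s\geq r-q-1$ (the case $f'\in E(C')$) or $s\gtrsim (r-q)/2$, we get roughly $\ell\geq 2r-2$ in the extreme cases $q=2$ or $q=r-2$. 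This uses $2\leq q\leq r-2$, the same convexity as in Case 1 of Claim~\ref{cla: block tree}.

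\textbf{Step 3: finish with a long path.} Combine $\ell\geq 2r-2$ (or near it) with $\ell\leq k-3\leq 2r-2$ from Claim~\ref{cla: k-3}, together with~\eqref{eq:2r-2}-type reasoning. An outside semi-path from a vertex of $B$ to an edge of $F_1$ containing at least $3$ vertices yields a Berge path on $\ell+3\geq k$ vertices. The path $P$, extended from an endpoint of $C'$, provides such a semi-path, giving the contradiction. Alternatively, Claim~\ref{cla: outside cycle 1} gives $\ell'\leq k-\ell-1$, which bounds $\ell$ from above and combines with the lower bound from Step 2.

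\textbf{Main obstacle.} The hardest step is guaranteeing that $P$ is long enough when $f$ attaches to $C'$ only through a short Menger path and $\ell'$ is only $r-q$. The fallback I would try first is to route through $f\cap B$: choose the attachment of $f$ to $C'$ and then traverse $C'$ the long way around to $f'$. Using $f\notin E(C')$ gives one extra vertex beyond those counted on $C'$, so $s\geq r-q$ holds when $f'\in E(C')$. The delicate equality cases ($X=Y$, $\ell=2r-2$) must then be killed, as in Claim~\ref{cla:bad edge}, by rerouting two consecutive segments of $C$.
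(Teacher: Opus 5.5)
There is a genuine gap. Your Steps 1 and 2 match the opening of the paper's proof: the Menger semi-path of \eqref{outsidecyclepath}, and the spacing via Lemma~\ref{lem:intC}. But the bounds you then claim are wrong, and the argument that actually finishes the proof is missing.

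The Menger construction only guarantees $|V(P)|\ge\lceil (r-q+1)/2\rceil$. Your ``about $\ell'-1$ vertices'' when $f'=f_j\in E(C')$ is unjustified: the path from $f$ may reach $C'$ at an element roughly opposite $f_j$, leaving only about $\ell'/2$ vertices on the longer arc. With $s=|V(P)|$, part~1 of Lemma~\ref{lem:intC} gives only $\ell\ge q\lceil (r-q+3)/2\rceil$. At $q=2$ this is $r+2$ or $r+1$ (depending on the parity of $r$), well below $2r-2$ once $r\ge 5$. The concavity you invoke puts the minimum at $q=2$, not near $2r-2$. Even granting $s\ge r-q-1$, part~1 gives only $q(r-q)\ge 2r-4$. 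Part~2 gives $\ell\ge r+q+1$, which does not contradict your upper bound $\ell\le k-3\le 2r-2$ when $q\le r-3$. So Step~3 has no configuration with $\ell=2r-2$ for an \eqref{eq:2r-2}-type argument to act on.

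The upper bound that matters is the one you mention only in passing: $\ell\le k-1-\ell'\le r+q$, from Claim~\ref{cla: outside cycle 1} and $\ell'\ge r-q$. This does dispose of $|X\cap Y|\le q-1$. For $|X\cap Y|\ge q$, however, the lower bound merely meets it. The configuration $\ell=r+q$, $\ell'=r-q$, $|V(P)|=\lceil (r-q+1)/2\rceil$ survives, and for $q=2$ with $r$ odd even $\ell=r+1$ survives. Killing these cases is the real content of the paper's proof:
\begin{itemize}
\item Choose $P$ longest subject to \eqref{outsidecyclepath}, and take a shortest outside semi-path from $C'$ to $e$.
\item If it has an interior vertex, it yields a Berge path on $(r-q)+1+(r+q)\ge k$ vertices.
\item Otherwise it is $u_i,e$. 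Then either $f_i$ has a vertex outside $V(C)\cup V(C')$, which again gives a path on $2r+1$ vertices, or $f_i\in F_q(B)$. In the latter case $f_i,u_{i+1},\dots,u_i,e$ competes with $P$, and the paper uses it against the maximal choice of $P$.
\item The case $q=2$, $r$ odd needs a separate argument: show $e_i\cap B=\emptyset$, find $w'\in e_i-(V(C)\cup V(C'))$, force $\ell'=r-2$, and then produce a $(2r+1)$-vertex path or a longer cycle.
\end{itemize}

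Your ``long way around $C'$'' idea is exactly what is used, but only where the attachment vertex $u_i$ lies on the target edge $f_i$. The paper reaches that situation only after the equality $\ell=r+q$ has pinned everything down. The Claim~\ref{cla:bad edge}-style rerouting you propose does not substitute for this.
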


\begin{proof}
Suppose on the contrary that there is an outside Berge cycle $C'=u_1, f_1, u_2, \ldots, f_{\ell'-1}, u_{\ell'}, f_{\ell'}, u_1$ on $\ell' \geq r-q$ vertices in $B$ that does not use some edge $e \in F_q(B)$.

First we will show the following.
\begin{equation}\label{outsidecyclepath}
    \text{\parbox{0.8\linewidth}{There exists an outside Berge semi-path from $e$ to $F_q(B) - \{e\}$ with at least $\lceil (r-q+1)/2 \rceil$ vertices containing a segment of $C'$ with at least $\lceil(r-q)/2\rceil$ vertices.}}
\end{equation}
Since $H[B]$ is $2$-connected, by Menger's theorem \cite{D} applied to the $2$-connected incidence bipartite graph of $I(H[B])$ of $H[B]$, there exist $2$ vertex- and edge-disjoint outside semi-paths from $V(C') \cup E(C')$ to $\{e,f\}$ where $f \in F_q(B)$. Choose $P_1$, $P_2$ to be a pair of such disjoint semi-paths such that $P_1$ ends in $e$ and $P_2$ ends in $f$ such that each $P_i$ intersects $V(C') \cup E(C')$ in exactly one element (namely the first element) and $|V(P_1)| + |V(P_2)|$ is maximized. Note that since $e \notin E(C')$, $|V(P_1)| \geq 1$.

Say $w_i = (V(C') \cup E(C')) \cap P_i$ for $i \in \{1,2\}$. Then let $P'$ be the longer semi-path in $C'$ from $w_1$ to $w_2$. 
Observe that $|V(P')| \geq \lceil |V(C')|/2 \rceil$ and if $\{w_1, w_2\} \cap V(C') \neq \emptyset$, then $|V(P')| \geq \lceil (|V(C')|+1)/2 \rceil \geq \lceil (r-q+1)/2 \rceil$. The union $P_1 \cup P' \cup P_2$ is an outside semi-path with at least $\lceil (r-q+1)/2 \rceil$ vertices, as desired. This proves~\eqref{outsidecyclepath}.



Now let $P$ be a longest outside semi-path satisfying~\eqref{outsidecyclepath}.  If there exists distinct vertices $v_i \in e \cap V(C)$ and $v_j \in f \cap V(C)$ such that $|i-j| \leq |V(P)|$, then replacing the shorter segment in $V(C)$ from $v_i$ to $v_j$ with $P$ yields a longer Berge cycle than $C$, a contradiction. Hence for all such $v_i, v_j, |i-j| \geq |V(P)|+1 \geq \lceil (r-q+1)/2 \rceil+1$. 

If $|V(C)| + |V(C')| = \ell + \ell' \geq k$, then we obtain a contradiction by Claim~\ref{cla: outside cycle 1}. In particular, $\ell < k-\ell' \leq 2r+1 - (r-q) = r+q+1$.

By Lemma~\ref{lem:intC} with $X = e \cap V(C)$ and $Y = f \cap V(C)$, if $|X\cap Y| \leq q-1$, then

\[\ell \geq 2q +  2  \lceil (r-q+1)/2 \rceil + |X \cap Y| \lceil (r-q+1)/2-1 \rceil\geq 2q + r-q+1 =r+q+1,\]
a contradiction.

Now suppose $|X \cap Y| \geq q$. Instead Lemma~\ref{lem:intC} gives $\ell \geq q(\lceil (r-q+1)/2 \rceil + 1) = q\lceil(r-q+3)/2 \rceil.$ 

{\bf Case 1}: $q \geq 3$ or $q=2$ and $q,r$ have the same parity.

Suppose first that $q$ and $r$ have the same parity. Then 
\begin{eqnarray*}
    \ell & \geq q \left(\frac{r-q+4}{2} \right) & \geq r + q\\
     \Leftrightarrow & (q-2) r &\geq  q^2-2q \\
     \Leftarrow & (q-2)(q+2) & \geq q^2-2q\\
     \Leftrightarrow & 2q & \geq 4.
\end{eqnarray*}
    The last inequality is true because we assume $q \geq 2$. 

If $q$ and $r$ have opposite parities and $q \geq 3$, then $r \geq q+3$. We instead get

\begin{eqnarray*}
    \ell & \geq q \left(\frac{r-q+3}{2} \right) & \geq r + q \\
     \Leftrightarrow & (q-2) r &\geq  q^2-q \\
     \Leftarrow & (q-2)(q+3) & \geq q^2-q\\
     \Leftrightarrow & 2q & \geq 6.
\end{eqnarray*}

Thus $\ell = r+q$. Note that we have equality only if $|V(P)| = \lceil (r-q+1)/2 \rceil$. Also, $\ell' = r-q$. Let $P'$ be a shortest outside semi-path from $V(C') \cup E(C')$ to $e \in F_q(B)$. If $P'$ contains an internal vertex, then there exists a Berge path with $r-q$ vertices in $C'$, $r+q$ vertices in $C$ and at least one vertex in $P'$. That is, we obtain a Berge path of length at least $2r+1 \geq k$, a contradiction.

Otherwise, $P' = u_i, e$ for some $u_i \in V(C')$. Suppose the edge $f_i \in E(C')$ contains a vertex $w \notin V(C) \cup V(C')$. For any $v_j \in e \cap V(C)$, we obtain the Berge path \[w, f_i, u_{i+1}, f_{i+1}, \ldots, u_{i-1}, f_{i-1}, u_i, e, v_j, e_j, v_{j+1}, \ldots, v_{j-1}\] which has $1+(r-q)+(r+q) \geq k$ vertices. 

So $f_i$ has at most $r-q = \ell'$ vertices in $B$, and hence $f_i \in F_q(B)$.  The outside semi-path $f_i, u_{i+1}, f_{i+1}, \\
\ldots, u_i, e$ has $r-q > (r-q + 1)/2$ vertices. Then this is a better choice than $P$ of a semi-path satisfying~\eqref{outsidecyclepath}, a contradiction. 

{\bf Case 2}: $q =2$ and $r$ is odd. By Lemma~\ref{lem:intC}, $r+q \geq \ell \geq 2 (|V(P)|+1)$, so $|V(P)| \leq (r+q-3)/2 = (r-1)/2$. If $\ell = r+q$, then as in the previous case, we can find a Berge path with $k$ vertices.
Also by Lemma~\ref{lem:intC}, \[\ell \geq 2\left(\frac{r-q+1}{2} + 1 \right) = r-q+3 = r+q-1,\] when $q = 2$. Here we have equality only if $X = Y = \{v_i, v_{i+(r-1)/2 +1}\}$ for some $i$ and $|V(P)| = (r-1)/2$ and $\ell=r+1$. By Claim \ref{cla: outside cycle 1}, we have $\ell'\leq k-\ell-1 \leq r-1$. Thus $\ell' \in \{r-2, r-1\}$. Let $v_i \in e \cap V(C)$. First we will show that $e_i \cap B =\emptyset$. If not, then by the definition of outside connected components, there exists an outside semi-path $P''$ from $e$ to the vertices of $e_i\cap B$ which contains at least one vertex in $B$. Then replacing the segment $v_i, e_i, v_{i+1}$ on $C$ by $v_i, e, P'', e_{i}, v_{i+1}$, we obtain a Berge cycle longer than $C$, contradicting the choice of $C$. Hence, $e_i \cap B =\emptyset$. Next, we will show that $e_{i} \in E(C)$ contains a vertex $w' \notin V(C) \cup V(C')$. Otherwise if $e_i \subseteq V(C) \cup V(C')$, then $e_i \subseteq V(C)$ since $V(C') \subseteq B$. As $|V(C)| = r+1$ and $|e_i| = r$, $e_i$ contains at least one of $v_{i+(r-1)/2 +2}$ and $v_{i+(r-1)/2 +3}$. If $v_{i+(r-1)/2 +3} \in e_i$, we obtain the Berge cycle \[f, P, e, v_i, e_{i-1}, v_{i-1}, \ldots, v_{i+(r-1)/2 +3}, e_i, v_{i+1}, e_{i+1}, \ldots, v_{i+(r+1)/2+1}, f\] which contains $(r-1)/2 + (r+1)-1 > |V(C)|$ vertices, contradicting the choice of $C$. The case of $v_{i+(r-1)/2 +2} \in e_i$ is similar. Hence, $e_{i} \in E(C)$ contains a vertex $w' \notin V(C) \cup V(C')$.

Let $P'''$ be the shortest semi-path from $e$ to $V(C') \cup E(C')$. Assume the last element of $P'''$ is $u_1$ if it is a vertex and $f_{\ell'}$ if it is an edge. If $\ell'=r-1$, then 
\[w', e_i, v_{i+1}, e_{i+1}, \ldots, e_{i-1}, v_i, e, P''', u_1, f_1, \ldots, u_{\ell'}\]
is a Berge path with at least $(r-1)+(r+1)+1=2r+1 \geq k$ vertices, a contradiction. Thus, we must have $\ell'=r-2$. If $P'''$ contains a vertex from $B - V(C')$, then there exists a Berge path with $r-2$ vertices in $C'$, at least one vertex from $P'''$, $r+1$ vertices in $C$, and the last vertex $w'$. That is, a Berge path with at least $2r+1 \geq k$ vertices, a contradiction. Hence, we have $P'''=e, u_1$ and we may assume $P = e, u_1, f_1, u_2, \ldots, u_{(r-1)/2}, f_{(r-1)/2}$ where $f = f_{(r-1)/2} \in E(C')$ 

If $f_{\ell'} \in F_2(B)$, then $e, u_1, f_1, \ldots, f_{\ell'}$ is a better choice than $P$ for~\eqref{outsidecyclepath}. Therefore $f_{\ell'}$ contains at least $r-1$ vertices in $B$ and hence some $w \notin V(C) \cup V(C')$. Then,
\[w, f_{\ell'}, u_{\ell'}, f_{\ell'-1},\ldots, u_1, e, v_i, e_{i-1}, v_{i-1}, \ldots, v_{i+1}, e_i, w' \] is a Berge path with $(r-2)+(r+1)+2=2r+1 \geq k$ vertices if $w \ne w'$ or a Berge cycle with $(r-2)+(r+1)+1=2r > \ell$ vertices if $w=w'$. 
\end{proof}

Let $q$ be the largest integer such that $|F_q(B)| \geq 2$ and note $e^* \in F_q(B)$. Since $r-1 \geq (k-3)/2\geq \ell/2$, $0\leq q \leq r-2$ by Claim \ref{cla:bad edge}. 
Since $H^*[B]$ does not contain the edge corresponding to $e^*$, each edge of $H^*[B]$ has cardinality at least $r-q$. Moreover $|E'(B)|\geq |B|$ implies $|E(H^*[B])| \geq |B|-1$. By Corollary \ref{cor:multi cycle} and Claim~\ref{cla: block tree}, $H^*[B]$ contains a Berge cycle with at least $r-q$ vertices which can be lifted to an outside Berge cycle in $B$ of $H$ with the same length that omits the edge $e^* \in F_q(B)$. 

Among all such outside Berge cycles in $B$ that omit $e^*$, let $C'$ be the longest one. Then $|V(C')| \geq r-q$, and by Claim \ref{cla: outside cycle 1}, $|V(C')|\leq k-\ell-1 \leq 2r+1-r-1=r$. Say $|V(C')| = r-t$ where $t \leq q$.

We get a contradiction to Claim~\ref{cla:2-conn}, unless $t \leq q \leq 1$.  To complete the proof of Theorem~\ref{thm:main}, then it suffices to show 

\begin{equation}\label{t2}t \geq 2.\end{equation}

    Suppose $t \leq 1$, that is $\ell':=|V(C')| \in \{r,r-1\}$. From Claim~\ref{cla: outside cycle 1}, $\ell = |V(C)| \leq k-\ell'-1 \leq 2r- \ell' \leq r+1$. Thus, $\ell \in \{r, r+1\}$. Let $C' = u_1,f_1, u_2, \ldots, u_{\ell'}, f_{\ell'},u_1$, and let $P$ be a shortest outside semi-path from $V(C') \cup E(C')$ to $e^*$. If the first element of $P$ is a vertex, without loss of generality assume it is $u_1$, and if the first element is an edge, assume it is $f_{\ell'}$. We may also assume by symmetry that $v_1 \in e^*$. 

{\bf Case 1}:  $\ell = |V(C)| = r$. As $H$ is a simple, $r$-uniform hypergraph, one of $e_1$ and $e_{\ell}$, say $e_{\ell}$, contains a vertex $z$ outside of $V(C)$. If $z \in B$, then there exists a semi-path $P'$ from $z$ to $e^*$.  Replacing the segment $v_{\ell}, e_{\ell}, v_1$ on $C$ by $v_{\ell}, e_{\ell}, z, P', e^*, v_1$, we obtain a Berge cycle longer than $C$, a contradiction. Thus $z \notin B$; in particular it does not belong to $V(C')\cup V(C) \cup V(P)$.

Set
   \[Q = u_{\ell'}, f_{\ell'-1}, \ldots, u_1, P, e^*, v_1, e_1, \ldots, v_{\ell}, e_{\ell}, z.\]
    
    If $(\ell, \ell') = (r,r)$ or if $(\ell, \ell')= (r,r-1)$ and $P$ contains an interior vertex, then $Q$ is a Berge path with at least $r+r+1 \geq k$ vertices, a contradiction. Then we assume $\ell' = r-1$ and $P=u_1,e^*$. We must have $f_i \cap V(C) \subseteq \{v_1\}$ for $i \in \{1,\ell'\}$: indeed if say $f_1$ contains some $v_j \in V(C)$ with $j \neq 1$, then replacing the shorter segment in $C$ from $v_1$ to $v_j$ with $v_1, P, u_1, f_{\ell'}, u_{\ell'}, f_{\ell'-1}, \ldots, u_2, f_1, v_j$ yields a Berge path with at least $\ell-(\ell-1)/2 + (r-1) >\ell$, contradicting the choice of $C$ as a longest Berge cycle. The case for $f_{\ell'}$ is similar.
    
    As $f_1, f_{\ell'}$ are distinct $r$-subsets, one of these edges contains a vertex $z'\notin V(C) \cup V(C')$. If $z' \in f_{\ell'}$, then we append $z', f_{\ell'}$ to the beginning of $Q$ to obtain a Berge path on $2r+1 \geq k$ vertices or Berge cycle $2r > \ell$ vertices if $z' = z$.  
    
    Similarly if $z' \in f_1$, we instead take $z', f_1, u_2, f_2, \ldots, f_{\ell'}, u_1, e^*, v_1, e_1, \ldots, v_{\ell}, e_{\ell}, z$ of the same length.

    {\bf Case 2}: $\ell = r+1$. The only case to check is $(\ell, \ell') = (r+1, r-1)$. Then we have $P = u_1, e^*$ and as in the previous case, one of $f_1$ or $f_{\ell'}$ contains a vertex $z' \notin V(C) \cup V(C')$. Say $z' \in f_{\ell'}$. Then the Berge path 
    \[z', f_{\ell'}, u_{\ell'}, f_{\ell'-1}, \ldots, u_1, P, e^*, v_1, e_1, \ldots, v_{\ell}\] contains at least $1 + (r-1) + (r+1) \geq k$ vertices, a contradiction.

    This proves~\eqref{t2} and completes the proof of Theorem~\ref{thm:main}.

\qed

\section{Concluding Remarks}
In~\cite{FKL}, the second author with F\"uredi and Kostochka proved that if $r, k$ are integers with $k \geq 4r \geq 12$ and $n$ is sufficiently large, then every $2$-connected, $n$-vertex, $r$-uniform hypergraph with more than ${\lceil (k+1)/2 \rceil \choose r} + {\lfloor (k-1)/2 \rfloor \choose r-1}(n-\lceil (k+1)/2 \rceil)$ edges contains a Berge cycle of length at least $k$. Later, Zhao, Gerbner, and Zhou~\cite{ZGZ} extended the bounds on $r,k$ to $k \geq 2r+2 \geq 8$.
As a corollary of their result, they obtained exact Tur\'an number of Berge $P_k$ in connected hypergraphs when $k \geq 2r+2$.

On the other hand, no exact results are known for the Tur\'an number of Berge $C_{\geq k}$ in $r$-uniform, $2$-connected hypergraphs where $k \leq 2r+1$. 
Here we use $C_{\geq k}$ to denote the set of all cycles with length at least $k$.

The case in which the hypergraph $H$ is allowed to contain multi-edges and the $r$-uniform condition is replaced with {\em lower rank $r$} (i.e., all edges contain at least $r$ vertices) is equivalent to an extremal problem for its incidence bipartite graph.

For integers $a,b,r$, define $G(a,b,r)$ to be the set of bipartite graphs $A \cup B$ with $|A| = a, |B| = b$ and $d(x) \geq r$ for all $x \in A$.
Jackson~\cite{Jackson} conjectured the following.

\begin{conj}[Jackson~\cite{Jackson}]
     Suppose $a \geq r$ and $G \in G(a,b,r)$ is a $2$-connected graph with  
     \[b \leq \left\lfloor \frac{2(a - \epsilon_r)}{r-1-\epsilon_r} \right\rfloor (r-2) + 1,\]

     where $\epsilon_r = 1$ if $r$ is even and $\epsilon_r=0$ if $r$ is odd. 
Then $G$ has a cycle of length at least $2r$.
\end{conj}
In particular if $r$ is odd, $H$ is a $2$-connected, $n$-vertex (multi)hypergraph with lower rank $r$, and $r-1$ divides $2|E(H)|$, then Jackson's Conjecture applied to the incidence bipartite graph $I(H)$ states that if
$|E(H)| \geq \frac{(n-1)(r-1)}{2(r-2)}$, then $H$ contains a Berge cycle of length at least $r$. Moreover, we can construct a multi-hypergraph on $\frac{(n-2)(r-1)}{2(r-2)}$ edges with no Berge cycle of length at least $r$: let $e_1, e_2, \ldots, e_{(n-2)/(r-2)}$ be $r$-subsets of $[n]$ in which every pair intersect in exactly $\{1,2\}$. Then let $H$ be obtained by taking $(r-1)/2$ copies of each $e_i$. 

We now define two families of $r$-uniform hypergraphs, the first one being similar to this construction.

\begin{construction}
    For $k \leq 2r+1$, let $(r-1) \mid (n-2)$, say $n = t(r-1) +2$. The family $\mathcal H_2$ consists of all $n$-vertex $r$-graphs $H$ where $V(H)$ is partitioned into sets $\{x,y\} \cup V_1 \cup \ldots, \cup V_t$ with $|V_i| = r-1$, and for each $i \in [t]$, $E(H)$ contains $\lfloor \frac{k-1}{2} \rfloor$ edges in $\{x,y\} \cup V_i$. Moreover, if $k$ is even, then we add one additional edge in $\{x,y\} \cup V_1$.
\end{construction}

\begin{construction}
    For $k = 2r+1$, let  $n = t (k-3)/2+2$. The family $\mathcal H_3$ consists of all $n$-vertex $r$-graphs $H$ where $V(H)$ is partitioned into sets $\{x,y\} \cup V_1 \cup\ldots \cup V_t$ with $|V_i| =(k-3)/2 $ for $i \in[t]$, and each $\{x,y\} \cup V_i$ induces a clique. 
\end{construction}

In both constructions, a longest Berge cycle intersects at most $2$ $V_i$'s and hence has length at most $k-1$.
We pose the following question.

{\bf Question}: For $r \geq 3$ and $k \leq 2r+1$, do the hypergraphs in $\mathcal H_2 \cup \mathcal H_3$ maximize the number of edges over all $r$-uniform, $n$-vertex, $2$-connected hypergraphs without Berge cycles of length $k$ or longer?

\bigskip

{\bf AI Disclosure}: 
No AI-assistance of any kind was used in the proof or preparation of this paper.

\newpage
\section{Appendix: Proofs for case $k=r+3$.}

We complete the proof of~\eqref{eq:1edge1} for the cases $r=3$, $\ell=k-2$, $k \in \{2r,2r+1\}$, and $k -3=r$ when $k = 2r$, $k-3=r+1$ when $k = 2r+1$.

\begin{proof}[Remainder of the Proof~\eqref{eq:1edge1} for $r=3$.] We assume there are two edges $e, e'$ with $|e -V(C)| =|e'-V(C)| = 1$, and $e \cap V(C) = e'\cap V(C) = \{v_2, v_4\}$. 
Say $e -V(C) = \{u_1\}, e'-V(C) = \{u_2\}$, where $u_1 \neq u_2$. Let $C = v_1, e_1, \ldots, v_\ell, e_\ell, v_1$. 

{\bf Case 1}: $k = 2r$. Here $\ell = k-2 = 4$.
If the edge $e_1$ contains a vertex $u \notin V(C)$, then without loss of generality assume $u \neq u_1$, and we can take the Berge path on $k$ vertices $u_1, e, v_2, e_2, v_3, e_3, v_4, e_4, v_1, e_1, u$. Similarly, one can show that all edges $e_i \in E(C)$ are contained in $\{v_1, v_2, v_3, v_4\}$. 

If we can rearrange the edges of $E(C)$ to obtain a Berge path $P$ from $v_2$ to $v_4$ containing $\{v_1, v_2, v_3, v_4\}$, then $u_1,e, P, e', u_2$ is a Berge path on $k$ vertices.

Therefore both $v_2, e_1, v_1, e^*, v_3,e_3, v_4$ where $e^* \in \{e_2, e_4\}$ and $v_2,e_2, v_3,f^*, v_1,e_4, v_4$ where $f^* \in \{e_1, e_3\}$ are not Berge paths. In particular, from $e^*$, we obtain $v_1 \notin e_2, v_3 \notin e_4$, and hence $e_2 = \{v_2,v_3, v_4\}, e_4 = \{v_4, v_1, v_2\}$. Similarly from $f^*$ we obtain $v_3 \notin e_1, v_1 \notin e_3$, so $e_1 = \{v_1, v_2, v_4\}$ and $e_3 = \{v_3, v_4, v_2\}$. Then $e_1 = e_4$, contradicting that $H$ contains no multi-edges.

{\bf Case 2}: $k = 2r+1$. Here $\ell = k-2 = 5$. Similar to Case 1, one can show that $e_1, e_2, e_3, e_4 \subseteq V(C)$, and there is no Berge path $P$ from $v_2$ to $v_4$ containing $\{v_1, v_2, v_3, v_4, v_5\}$ with edges in $E(C)$.

Then $v_2, e_1, v_1, e_5, v_5, e^*, v_3, e_3, v_4$ with $e^* \in \{e_2, e_4\}$ and $v_2,e_2, v_3, f^*, v_1, e_5, v_5, e_4, v_4$ with $f^* \in \{e_1, e_3\}$ are not Berge paths.
 It follows that $v_5 \notin e_2, v_3 \notin e_4$, $v_3 \notin e_1$, and $v_1 \notin e_3$. 

We have $e_2 = \{v_2, v_3, v_1\}$ or $e_2 = \{v_2, v_3, v_4\}$, and $e_3 = \{v_3, v_4, v_5\}$ or $e_3 = \{v_3,v_4, v_2\}$. Without loss of generality, assume $e_2 \neq \{v_2, v_3, v_4\}$. If $e_3 = \{v_2, v_3, v_4\}$, then we take  $P= v_2, e_3, v_3, e_2, v_1, e_5, v_5, e_4, v_4$. If $e_3 = \{v_3,v_4, v_5\}$, then $P=v_2, e_1, v_1, e_2, v_3, e_3, v_5, e_4, v_4$. In both cases, we obtain a contradiction. This completes the proof of~\eqref{eq:1edge1}.
\end{proof}

Here we also prove that a counterexample $H$ cannot have $k=r+3$, assuming Claims~\ref{cla: edges inside C} through~\ref{cla: outside cycle 1}.

\begin{claim} \label{cla:kr+3}
$k \geq r+4$.
\end{claim}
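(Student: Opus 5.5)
Throughout, assume $H$ is a counterexample with $k=r+3$. Then $|E(H)|\ge n-r+2$, and by Claim~\ref{cla: k-3} together with $\ell\ge r$ we get $\ell=r$. By Claim~\ref{cla: edges inside C} with $\ell=r$, the set $E(C)\cup\{e: e\subseteq V(C)\}$ has exactly $r$ elements. So it suffices to prove
\[\sum_{i=1}^t |E'(B_i)| \le n-2r+1 = \sum_i |B_i| - (r-1).\]
That is, I must find $r-1$ ``units of savings'' beyond the trivial bound $|E'(B_i)|\le|B_i|$ that the pruning argument of Claim~\ref{cla: 2-conn block} gives. Since $\ell$ is the maximum cycle length, $H$ has no Berge cycle of length $\ge r+1$. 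A Berge path on $r+3$ vertices arises as soon as a spanning path of $C$ can be extended by three outside vertices in total at its two ends.

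\textbf{Step 1: each $B_i$ is tree-like.}
First I rule out $|E'(B_i)|\ge|B_i|+1$ for every outside component. The argument is the one the paper used for $k\ge r+4$. Claim~\ref{cla: 2-conn block} would give a $2$-connected block with $|E'(B)|\ge|B|$, and Claim~\ref{cla:F1} and Claim~\ref{cla: block tree} would then produce an outside Berge cycle of length $\ge 2$. Because $\ell=r$ and $k=r+3$, Claim~\ref{cla: outside cycle 1} forbids outside cycles on $\ge 3$ vertices. So the block consists of two vertices joined by two edges $f,f'$ whose traces on $V(C)$ have size $r-2$. Applying Lemma~\ref{lem:intC} to these traces (with $s=1$, or $s=2$ using the outside segment) contradicts $\ell=r$, exactly as in Case~2 of Claim~\ref{cla: block tree}. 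Hence $|E'(B_i)|\le|B_i|$ for all $i$.

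\textbf{Step 2: savings from the cycle edges.}
Since $H$ is simple and $|V(C)|=r$, at most one edge of $C$ equals $V(C)$. So at least $r-1$ of the edges $e_i$ contain an outside vertex $z_i$. Each such $z_i$ is the endpoint of the Berge path $z_i,e_i,v_{i+1},\dots,v_i$ on $r+1$ vertices. Consequently any outside semi-path leaving $z_i$ through an edge other than $e_i$ and reaching two more vertices (or one more vertex plus an outside vertex of another $e_j$ at the other end) gives $r+3$ vertices. I would use this to show the following: the component $B$ containing $z_i$ satisfies $|E'(B)|\le|B|-1$, or more precisely, loses one unit for each distinct ``attachment'' $z_i$ it contains. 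This holds because $z_i$ effectively behaves as a vertex whose incident $E'$-edges are severely restricted. In the pruning count, the leaf structure around $z_i$ then contributes strictly fewer edges than vertices.

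\textbf{Step 3: count distinct attachments (main obstacle).}
The main difficulty is ensuring that the $r-1$ savings are genuinely distinct. Different $e_i$ may share the same outside vertex $z$, or their outside vertices may lie in one component. For shared $z\in e_i\cap e_j$, I would first try rerouting $C$ (replacing $v_{i+1},\dots$ by a detour through $z$) to get either a cycle of length $r+1$ or a path on $r+3$ vertices. I expect this to work except when the $e_i$ with a common outside vertex are consecutive. In that exceptional situation I would instead count the degree-one outside vertices of these edges directly: an $r$-edge has only $r$ vertices, so the edges $e_i$ collectively force many outside vertices of degree $1$. Each such vertex contributes a vertex to $\sum|B_i|$ but no edge to $E'$.

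Combining Steps~1--3 gives $\sum|E'(B_i)|\le n-2r+1$, hence $|E(H)|\le n-r+1$, contradicting the choice of $H$. Thus $k\ge r+4$.
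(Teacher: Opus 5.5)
Your reduction to $\sum_i|E'(B_i)|\le n-2r+1$ and your Step~1 are fine. With $\ell=r$ and $k=r+3$, Claim~\ref{cla: outside cycle 1} forbids outside cycles on three or more vertices, and the main-text argument following Claim~\ref{cla: 2-conn block} then rules out a component with $|E'(B_i)|\ge|B_i|+1$.

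The genuine gap is in Steps~2--3. They are a plan rather than a proof, and the plan is false as stated. Note that you never use $n\ge n_0$, yet the claim needs it. Take $r=4$, $k=7$, $V(H)=\{v_1,\dots,v_4,z,y\}$ and $E(H)=\{e_1,\dots,e_4\}$ with $e_i=\{v_i,v_{i+1},z,y\}$. This $H$ is connected, has $\ell=4=r$, has no Berge path on $7$ vertices, and has $4>n-(k-3)+\binom{k-3}{r}=3$ edges. Yet $\sum_i|E'(B_i)|=0>n-2r+1=-1$. Here $z\in e_1\cap e_3$ with $e_1,e_3$ non-consecutive, and no rerouting gives a $5$-cycle or a $7$-vertex path (trivially, as $H$ has four edges and six vertices). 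Moreover, the cycle edges have only two outside vertices in total. So both your rerouting expectation and your fallback count in Step~3 fail.

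Adding edges $\{z,a^j_1,a^j_2,a^j_3\}$ on new vertices gives large-$n$ examples, still without a Berge path on $7$ vertices. In these there are still only two distinct attachments, $z$ and $y$, while $r-1=3$ units are needed. The actual savings come from $E'$-edges with several private outside vertices, which your accounting never counts.

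The parenthetical in Step~2 is also unjustified. The spanning path $z_i,e_i,v_{i+1},\dots,v_i$ already uses every edge of $C$, so it cannot be continued to an outside vertex of another $e_j$ without an extra chord.

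The paper supplies the missing ideas as follows.
\begin{itemize}
\item It first shows that every outside vertex has $E'$-degree at most $1$. An outside vertex of $E'$-degree at least $2$ lies only in $E'$-edges disjoint from $V(C)$. Each $E'$-edge contains at most one such vertex. A global count then gives $|E(H)|\le n-r$ if any such vertex exists.
\item It then splits on whether some $e'\in E'(H)$ has $|e'\cap V(C)|=r-1$.
\item If no such edge exists, every $E'$-edge has at least two private outside vertices, so $|E(H)|\le r+(n-r)/2$. This is exactly where $n\ge 3r-2$ enters.
\item If $e'$ exists, it misses only one vertex of $C$ and acts as the chord that makes rerouting work. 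It forces $|\bigcup_i e_i-V(C)|\ge r-1$ and forces these vertices to have $E'$-degree $0$, which yields exactly the $r-1$ units of savings you were after.
\end{itemize}
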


\begin{proof}
Suppose on the contrary that $k=r+3$. By Claim \ref{cla: k-3}, we have $k-3=r\leq \ell \leq k-3$. Then $\ell=r=k-3$. By the choice of $C$, we may assume that if there exists an edge contained in $V(C)$, then there is exactly one such edge and it belongs to $E(C)$. Thus \[E'(H)=\{e \in E(H)-E(C): e \cap (V(H) - V(C)) \neq \emptyset\} = E(H) - E(C).\]
For simplicity, we write $d_{E'}(v)= d_{E'(H)}(v)$. 
We show the following.
\begin{equation}\label{fnotinC}\hbox{If $u \in V(H) - V(C)$ and $d_{E'}(u) \geq 2$, then for every edge $f \in E'(H)$ containing $u$, $f \cap V(C) = \emptyset$.}\end{equation}

 Suppose this is not true, say $u$ belongs to edges $f_1, f_2 \in E'(H)$, and $f_1 \cap V(C) \neq \emptyset$. Without loss of generality, say $|f_1 \cap V(C)| \geq |f_2 \cap V(C)|$, and say $v_\ell \in f_1$. If it exists, let $u' \in f_2 - V(C) - \{u\}$. One of $e_{\ell}$ and $e_1$ is not identically the set $V(C)$, and hence contains a vertex $w \notin V(C)$. By symmetry, assume it is $e_\ell$.  Then $w, e_\ell, v_1, e_1, \ldots, v_\ell, f_1, u, f_2, u'$ is either a Berge path with $r+3 = k$ vertices or $u' = w$ and it is a longer Berge cycle than $C$.  Otherwise, if $f_2 -V(C) = \{u\}$, then both $f_1$ and $f_2$ contain $r-1$ vertices in $C$. For  distinct $v_i \in f_1 \cap V(C), v_j \in f_2 \cap V(C)$, we must have $|i-j| \geq 2$, otherwise we could replace the short segment between $v_i$ and $v_j$ in $C$ with $v_i, f_1, u, f_2, v_{j}$ and obtain a longer Berge cycle. Applying Lemma \ref{lem:intC} with $q=r-1$ and $s=1$, $X = f_1 \cap V(C), Y = f_2 \cap V(C)$, we have $r=\ell \geq 2(r-1)$, a contradiction. This proves~\eqref{fnotinC}. Now we will show that edges in $E'(H)$ share at most one vertex in $V(H) - V(C)$.

\begin{equation}\label{eq:E'deg2}
\mbox{\shortstack{Every edge $e \in E'(H)$ contains at most one vertex $v \in V(H)-V(C)$ with $d_{E'}(v) \geq 2$.}}    
\end{equation}

Suppose some edge $f_1 \in E'(H)$ contains at least two vertices, $u, u'$ with degree at least 2 in $E'(H)$. By~\eqref{fnotinC}, $f_1 \cap V(C) = \emptyset$. Let $P$ be a shortest semi path from $V(f_1)$ to $V(C) \cup E(C)$. By the minimality of $P$, $P$ contains at most one of $u, u'$, say it does not contain $u'$. Moreover, $f_1 \notin E(P)$. If $P$ ends with a vertex in $V(C)$, let it be $v_i$. If it ends with an edge in $E(C)$, let it be $e_i$.

Suppose $P$ contains at least $2$ vertices outside of $C$. The Berge path 
\[u', f_1, P, v_i, e_{i-1}, v_{i-1}, \ldots, v_{i+1}\] contains at least $r+3=k$ vertices, a contradiction.

Therefore we may assume either $P = w,e_i$ or $P = w, f_2, v_i$ for some $w \in f_1 - V(C)$, $f_2 \in E'(H) - \{f_1\}$. In the latter case, we have $d_{E'}(w) \geq 2$ but $f_2 \cap V(C) \neq \emptyset$, contradicting~\eqref{fnotinC}. So assume $P = w, e_i$. Let $f_2 \in E'(H)$ contain $u'$. Since $f_2 \cap V(C) = \emptyset$ and $r \geq 3$, there exists a vertex $w' \in f_2 - \{w, u'\}$. We instead get the Berge path \[w', f_2, u', f_1, w, e_i, v_i, e_{i-1}, \ldots, v_{i+1}\] with $k$ vertices. This proves~\eqref{eq:E'deg2}.

Define $\mathcal U = \{u \in V(H) - V(C): d_{E'}(u) \geq 2\}$, and for each $u \in \mathcal U$, define $S(u)=\{e \in E'(H): u \in e\}$. By~\eqref{eq:E'deg2}, each pair of edges in $S(u)$ intersect in $V(H) - V(C)$ at exactly $u$. Moreover, for distinct $u, u' \in \mathcal U$, $S(u)$ and $S(u')$ are both edge-disjoint and vertex-disjoint: each edge in $S(u)$ has a unique vertex of degree at least $2$ in $E'(H)$, and so edges in $S(u)$ cannot also contain $u'$. Moreover, all vertices in $\bigcup_{f \in S(u)} (f -V(C)) - \{u\}$ belong to exactly one $E'(H)$ edge which is in $S(u)$. Thus these vertices cannot also belong to $\bigcup_{f \in S(u')} (f -V(C)) - \{u'\}$.

 If $\mathcal U$ is nonempty, let $E''(H)$ be obtained from $E'(H)$ by removing all but one edge from each set $S(u)$ for every $u \in \mathcal U$. By construction, $d_{E''(H)}(v) \leq 1$ for all $v \in V(H) - V(C)$. Moreover, say we removed all edges in $S(u)$ except for some edge $f_1 \in E''(H)$. Then every vertex in $\bigcup_{f \in S(u), f \neq f_1} (f -V(C)) - \{u\}$ has degree $0$ in $E''(H)$. Let $\mathcal X$ be the set of these vertices. 

Suppose $|E'(H)| - |E''(H)| = t$ for some $t \geq 1$. That is, we removed a total of $t$ edges from the $S(u)$'s to obtain $E''(H)$. We have $|\mathcal X| = t(r-1)$. Let $f \in E''(H)$ be an edge in $S(u)$ for some $u \in \mathcal U$ that did not get deleted. The $r$ vertices in $f$ belong to no other edges of $E''(H)$. We have

\begin{eqnarray*}|E(H)| &=& |E(C)| + t+|E''(H)|\\
&\leq& |E(C)| + t + |\{f\}| +\sum_{v \in V(H)-V(C), v \notin f} d_{E''(H)}(v) \\
&\leq& |E(C)| + t +1+|V(H) - V(C)| - r - |\mathcal X|\\
&\leq& r + t + 1+ (n-r) -r- t(r-1)\\
& \leq & r+1+1+(n-r) - r- (r-1)\\
& \leq & n-r\\
&< & {k-3 \choose r} + n-(k-3),\end{eqnarray*}
a contradiction. Thus $E'(H) = E''(H)$, and so no such $S(u)$ exist. That is, 

\begin{equation}\label{eq:deg1}\hbox{every vertex $u \in V(H) - V(C)$ has $d_{E'}(u) \leq 1$.}\end{equation} 

If every edge $e \in E'(H)$ contains at least $2$ vertices in $V(H) - V(C)$, then 
\[|E(H)| \leq |E(C)| + |E(H')| \leq r + \frac{n-r}{2} \leq {k-3 \choose r} + n-(k-3),\]

a contradiction for $n \geq n_0 \geq 3r-2$. So assume some edge $e' \in E'(H)$ contains $r-1$ vertices in $C$. 

Now we show that 
\begin{equation}\label{eq: separate}
|\bigcup_{i=1}^r e_i-V(C)| \geq r-1.
\end{equation}

Since $\ell=r$, there is at most one edge from $E(C)$ that contains no vertex outside of $C$. Thus if $e_i \cap e_j-V(C)=\emptyset$ for every pair $e_i,e_j \in E(C)$, then \eqref{eq: separate} follows immediately. Therefore, we assume that $e_i \cap e_j-V(C) \ne \emptyset$ for some $e_i,e_j \in E(C)$. Let $u \in e_i \cap e_j-V(C)$. We show

First suppose $|j-i|>1$. Since $e'$ contains all but one vertex from $V(C)$, we have either $v_i,v_j \in e'$ or $v_{i+1}, v_{j+1} \in e'$. Without loss of generality, assume $v_i,v_j \in e'$. Then \[v_i, e', v_j, e_{j-1},\ldots, v_{i+1}, e_i, u, e_j, v_{j+1}, e_{j+1}, \ldots, v_i\]
is a Berge cycle longer than $C$, a contradiction.

Hence $|i-j| = 1$. By symmetry, we may assume $j=i+1$. If $v_i, v_{i+1} \in e'$, then replacing the segment $v_i, e_i, v_{i+1}, e_{i+1}, v_{i+2}$ on $C$ by $v_i, e', v_{i+1}, e_i, u, e_{i+1}, v_{i+2}$, we obtain a Berge cycle longer than $C$, a contradiction. Similar for $v_{i+1}, v_{i+2} \in e'$. Thus, we must have $v_{i+1} \notin e'$. And moreover, since $e'$ only omits one vertex from $V(C)$, $e_i, e_{i+1}$ are the only pair of edges in $E(C)$ that may intersect outside of $C$. 

If there exists an edge in $ E(C)$ that contains no vertex outside of $C$, let $e_h=\{v_1, v_2, \ldots, v_r\}$ be that edge. We have $h \neq i, i+1$. Since $v_{i+1}$ is the only vertex from $V(C)$ that is not contained in $e'$, we have $v_h, v_{h+1} \in e'$. Then replacing the segment $v_h, e_h, v_{h+1}$ on $C$ by $v_h, e', v_{h+1}$, and the segment $v_i, e_i, v_{i+1}, e_{i+1}, v_{i+2}$ on $C$ by $v_i, e_h, v_{i+1}, e_i, u, e_{i+1}, v_{i+2}$, we obtain a Berge cycle longer than $C$, a contradiction. Thus, every edge from $E(C)$ contains a vertex outside of $C$. We have shown $e_{i'}\cap e_{j'}-V(C) =\emptyset$ for any pair $e_{i'}, e_{j'} \in E(C)-\{e_i, e_{i+1}\}$. We obtain $|\bigcup_{i=1}^r e_i-V(C)| \geq r-2+1=r-1$, as desired. This proves \eqref{eq: separate}.


Now we show that 
\begin{equation}\label{eq: d=0}
d_{E'}(u)=0 \ \mbox{for any} \ u \in \bigcup_{i=1}^r e_i-V(C).
\end{equation}
Suppose \eqref{eq: d=0} is not true, and some $u$ is contained in $e_i$ and $f$ for some $f \in E'(H)$. Among all such $f$, choose $f$ to maximize $|f \cap V(C)|$. If $|f \cap V(C)| = r-1$, then either $v_i$ or $v_{i+1}$ belong to $f$. By symmetry assume $v_i \in f$. By replacing the segment $v_i, e_i, v_{i+1}$ on $C$ with $v_i, f, u, e_i, v_{i+1}$, we obtain a Berge cycle longer than $C$, a contradiction.
So assume $|f \cap V(C) | \leq r-2$ and let $w \in f-V(C)-\{u\}$. Recall that $e' \in E'(H)$ contains $r-1$ vertices in $V(C)$. By the choice of $f$, $u \notin e'$. Let $z \in e' - V(C)$. Since $e'$ miss only one vertex from $V(C)$, either $v_i \in e'$ or $v_{i+1} \in e'$. Without loss of generality, assume $v_i \in e'$.

 Then \[ z, e', v_i, e_{i-1}, \ldots, e_{i+1}, v_{i+1}, e_i, u, f, w\] is either a Berge path with $\ell+3=k$ vertices or a Berge cycle with $\ell+1$ vertices (if $z = w)$, a contradiction. This proves~\eqref{eq: d=0}. We have shown that every edge in $E'(H)$ contains a vertex $v \notin V(C)$, and moreover this vertex does not belong to any $e_i$ and has $d_{E'}(v) = 1$. It follows that
\begin{align*}
    |E(H)| & = |E(C)|+|E'(H)|\\
    & \leq r+\sum_{ v \in V(H)- \bigcup_{i=1}^r e_i} d_{E'}(v)\\
    & \leq r+n-|V(C)|-|\bigcup_{i=1}^r e_i-V(C)|\\
    & \leq r+n-r-(r-1)=n-r+1 \leq {k-3 \choose r} + n-(k-3),
\end{align*}
a contradiction. This completes the proof.

\end{proof}
\end{document}